\newtheorem{definition}{Definition}
\newtheorem{theorem}{Theorem}
\newtheorem{lemma}[theorem]{Lemma}
\newtheorem{remark}[theorem]{Remark}
\newtheorem{corollary}[theorem]{Corollary}
\title{Word-representability of split graphs}
\author{Sergey Kitaev\footnote{Department of Computer and Information Sciences, University of Strathclyde, 26 Richmond Street Glasgow G1 1XH, United Kingdom. 
{\bf Email:} sergey.kitaev@cis.strath.ac.uk.}, Yangjing Long\footnote{School of Mathematical Sciences,
        Shanghai Jiao Tong University, Dongchuan Road 800, 200240 Shanghai,
        China; and
        Department of Mathematics and Computer Science,
        University of Greifswald,
        Walther-Rathenau-Stra{\ss}e 47, D-17487 Greifswald, Germany. 
        {\bf Email:} yjlong@sjtu.edu.cn.}, Jun Ma\footnote{Department of Mathematics, Shanghai Jiao Tong University, Shanghai 200240, China. {\bf Email:} majun904@sjtu.edu.cn.}\ \ and Hehui Wu\footnote{Shanghai Center for Mathematical Sciences, Fudan University, 220 Handan Road, Shanghai 200433, China. {\bf Email:} hhwu@fudan.edu.cn.}}
\begin{document}  

\maketitle

\abstract{Letters $x$ and $y$ alternate in a word $w$ if after deleting in $w$ all letters but the copies of $x$ and $y$ we either obtain a word $xyxy\cdots$ (of even or odd length) or a word $yxyx\cdots$ (of even or odd length). A graph $G=(V,E)$ is word-representable if and only if there exists a word $w$
over the alphabet $V$ such that letters $x$ and $y$ alternate in $w$ if and only if $xy\in E$. It is known that a graph is word-representable if and only if it admits a certain orientation called semi-transitive orientation.

Word-representable graphs generalize several important classes of graphs such as $3$-colorable graphs, circle graphs, and comparability graphs. There is a long line of research in the literature dedicated to word-representable graphs. However, almost nothing is known on word-representability of split graphs, that is, graphs in which the vertices can be partitioned into a clique and an independent set. In this paper, we shed a light to this direction. In particular, we characterize in terms of forbidden subgraphs word-representable split graphs in which vertices in the independent set are of degree at most 2, or the size of the clique is 4. Moreover, we give necessary and sufficient conditions for an orientation of a split graph to be semi-transitive.}  

\section{Introduction}\label{sec1}

The theory of word-representable graphs is a young but very promising research area.  It was introduced by the first author in 2004 based on the joint research with Steven Seif \cite{KS08} on the celebrated {\em Perkins semigroup}, which has played a central role in semigroup theory since 1960, particularly as a source of examples and counterexamples. However, the first systematic study of word-representable graphs was not undertaken until the appearance in 2008 of the paper \cite{KP08} by the author and Artem Pyatkin, which started the development of the theory. One of the key results in the area established in \cite{HKP16} is the theorem stating that a graph is word-representable if and only if it admits a semi-transitive orientation (defined in Section~\ref{sec4}).

Up to date, nearly 20 papers have been written on the subject~\cite{K17}, and the core of the book \cite{KL15} by the first author and Vadim Lozin is devoted to the theory of word-representable graphs. It should also be mentioned that the software produced by Marc Glen \cite{G} is often of great help in dealing with word-representation of graphs.

A graph $G=(V,E)$ is {\em word-representable} if and only if there exists a word $w$
over the alphabet $V$ such that letters $x$ and $y$, $x\neq y$, alternate in $w$ if and only if $xy\in E$ (see Section~\ref{sec2} for the definition of alternating letters). The class of word-representable graphs is {\em hereditary}. That is, removing a vertex $v$ in a word-representable graph $G$ results in a word-representable graph $G'$. Indeed, if $w$ represents $G$ then $w$ with $v$ removed represents $G'$.  

We refer the Reader to \cite{KL15}, where relevance of word-representable graphs to various fields is explained, thus providing a motivation to study the graphs. These fields are algebra, graph theory, computer science, combinatorics on words, and scheduling. In particular, word-representable graphs are important from graph-theoretical point of view, since they generalize several fundamental classes of graphs (e.g.\ {\em circle graphs}, {\em $3$-colorable graphs} and {\em comparability graphs}).

Even though much is understood about word-representable graphs \cite{K17,KL15}, almost nothing is known on word-representability of {\em split graphs}, that is, graphs in which the vertices can be partitioned into a clique and an independent set. The only known examples in the literature of non-word-representable split graphs are shown in Figure~\ref{nonRepTri}. These graphs are three out of the four graphs on the last line in Figure 3.9 on page 48 in \cite{KL15} showing all 25 non-word-representable graphs on 7 vertices. We note that non-representability of $T_1$ is discussed, e.g. in \cite{CKS16}, and non-word-representability of $T_2$ follows from Theorem~\ref{neighbourhood} below. The minimality by the number of vertices for the graphs follows from the fact that only the {\em wheel graph} $W_5$ (see Section~\ref{sec2} for the definition) is non-word-representable on six vertices.

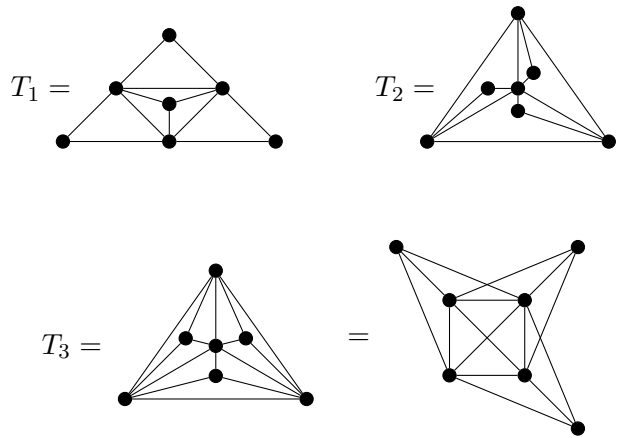
\begin{figure}
\begin{center}

\begin{tabular}{ccccc}
\begin{tikzpicture}[node distance=1cm,auto,main node/.style={fill,circle,draw,inner sep=0pt,minimum size=5pt}]

\node[main node] (1) {};
\node[main node] (2) [below left of=1] {};
\node[main node] (3) [below right of=1] {};
\node[main node] (4) [below left of=2] {};
\node[main node] (5) [below right of=2] {};
\node[main node] (6) [below right of=3] {};
\node[main node] (7) [above of=5, yshift=-0.5cm] {};

\node (8) [left of=2] {$T_1=$};

\path
(1) edge (4)
(1) edge (6);

\path
(5) edge (2)
(5) edge (3)
(5) edge (4)
(5) edge (6);

\path
(7) edge (2)
(7) edge (3)
(7) edge (5);

\path
(2) edge (3);
\end{tikzpicture}

& 

\ \ \ 

&

\begin{tikzpicture}[node distance=1cm,auto,main node/.style={fill,circle,draw,inner sep=0pt,minimum size=5pt}]

\node[main node] (1) {};
\node[main node] (2) [below of=1] {};
\node[main node] (3) [below left of=2, xshift=-0.5cm] {};
\node[main node] (4) [below right of=2, xshift=0.5cm] {};
\node[main node] (5) [left of=2, xshift=0.6cm] {};
\node[main node] (6) [below of=2, yshift=0.7cm] {};
\node[main node] (7) [above right of=2, xshift=-0.5cm, yshift=-0.5cm] {};
\node (8) [left of=2,  xshift=-0.5cm] {$T_2=$};

\path
(1) edge (2)
(1) edge (4)
(1) edge (7)
(1) edge (3);

\path
(2) edge (3)
(2) edge (4)
(2) edge (5)
(2) edge (6)
(2) edge (7);

\path
(3) edge (4)
(3) edge (5);

\path
(6) edge (4);

\end{tikzpicture}

\end{tabular}

\ \\[1cm]

\begin{tikzpicture}[node distance=1cm,auto,main node/.style={fill,circle,draw,inner sep=0pt,minimum size=5pt}]

\node[main node] (1) {};
\node[main node] (2) [below of=1] {};
\node[main node] (3) [below left of=2,xshift=-0.5cm] {};
\node[main node] (4) [below right of=2,xshift=0.5cm] {};
\node[main node] (5) [below of=2,yshift=0.6cm] {};
\node[main node] (6) [left of=2,yshift=0.1cm,xshift=0.6cm] {};
\node[main node] (7) [right of=2,yshift=0.1cm,xshift=-0.6cm] {};

\node [above left of=3] {$T_3=$};

\path
(2) edge (6)
(2) edge (7)
(5) edge (3)
(5) edge (4)
(5) edge (2)
(1) edge (3)
(1) edge (4)
(6) edge (3)
(7) edge (4)
(1) edge (6)
(1) edge (7)
(2) edge (3)
(2) edge (4)
(4) edge (3)
(1) edge (2);

\node [right of=7,xshift=0.5cm] {$=$};

\node[main node] (8) [above right of=7,xshift=2cm,yshift=-0.2cm] {};
\node[main node] (9) [right of=8] {};
\node[main node] (10) [below of=9] {};
\node[main node] (11) [left of=10] {};
\node[main node] (12) [above left of=8] {};
\node[main node] (13) [above right of=9] {};
\node[main node] (14) [below right of=10] {};

\path
(8) edge (9)
(8) edge (10)
(8) edge (11)
(9) edge (10)
(9) edge (11)
(10) edge (11)
(12) edge (8)
(12) edge (9)
(12) edge (11)
(13) edge (8)
(13) edge (9)
(13) edge (10)
(14) edge (11)
(14) edge (9)
(14) edge (10);

\end{tikzpicture}

\caption{\label{nonRepTri} The minimal (by the number of vertices) non-word-representable split graphs $T_1$, $T_2$ and $T_3$}
\end{center}
\end{figure}

In this paper we characterize in terms of forbidden subgraphs word-representable split graphs  in which vertices in the independent set are of degree at most 2 (see Theorem~\ref{main-1}), or the size of the clique is 4 (see Theorem~\ref{main-2}). To achieve these results, we introduce the following classes of graphs:
\begin{itemize}
\item $K^{\triangle}_\ell$, $\ell\geq 3$, in Definition~\ref{def-K-triang} that are always word-representable by Theorem~\ref{thm-K-Tri-m-w-r}. This class of graphs is generalized in Corollary~\ref{K-ell-k} to word-representable graphs $K_{\ell}^k$. 
\item $A_\ell$, $\ell\geq 4$, in Definition~\ref{def-Aell} that are always minimal non-word-representable by Theorem~\ref{Aell-min-non-repres}. This class of graphs generalizes the known non-word-representable graph $T_1$ in Figure~\ref{nonRepTri}, which corresponds to $\ell=4$. \end{itemize} 
Also, in Theorem~\ref{main-orientation} we give necessary and sufficient conditions for an orientation of a split graph to be semi-transitive. A particular property of semi-transitive orientations is established in Theorem~\ref{intercahnging}.

Finally, directions for further research can be found in Section~\ref{last-sec}.

\section{Split graphs}

Let $S_n$ be a split graph on $n$ vertices. The vertices of $S_n$ can be 
partitioned into a maximal clique $K_m$ and an independent set $E_{n-m}$, that is,  
the vertices in $E_{n-m}$ are of degree at most $m-1$. We only consider such ``maximal'' partitions throughout the paper and say that $S_n=(E_{n-m},K_m)$. 

Based on \cite{FH77} it can be shown \cite[Theorem 2.2.10]{KL15} that the class of split graphs is the intersection of the classes of {\em chordal graphs} (those avoiding all cycle graphs $C_m$, $m\geq 4$, as induced subgraphs) and their complements, and this is precisely the class of graphs not containing the graphs $C_4$, $C_5$ and $2K_2=\begin{tikzpicture}[node distance=0.3cm,auto,main node/.style={fill,circle,draw,inner sep=0pt,minimum size=4pt}]
\node[main node] (1) {};
\node[main node] (2) [below of=1] {};
\node[main node] (3) [right of=1] {};
\node[main node] (4) [right of=2] {};
\path
(1) edge (2);
\path
(3) edge (4);
\end{tikzpicture}
$ as induced subgraphs. More relevant to our studies is the following result (see Section~\ref{sec2} for the definition of a comparability graph).

\begin{theorem}[\cite{Gol}]\label{thm-comp-split} Split comparability graphs are characterized by avoiding the three 
graphs in Figure~\ref{forbidden-split-comp} as induced subgraphs. \end{theorem}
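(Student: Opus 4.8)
The plan is to prove the two implications separately. Throughout, write $F_1,F_2,F_3$ for the three graphs drawn in Figure~\ref{forbidden-split-comp}, and recall two standard facts: the class of comparability graphs is hereditary, and (Golumbic's forcing theory) a graph admits a transitive orientation if and only if no implication class of its edges — the closure under the forcing relation $\Gamma$, where $ab\,\Gamma\,ac$ whenever $bc\notin E$ — contains an arc together with its reverse.

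\emph{Necessity.} Here I would show that none of $F_1,F_2,F_3$ is a comparability graph; heredity then forces any split graph inducing some $F_i$ to be a non-comparability graph, so split comparability graphs avoid all three. For each (small) $F_i$ this is a finite verification: exhibit an explicit $\Gamma$-chain $e_0\,\Gamma\,e_1\,\Gamma\,\cdots\,\Gamma\,e_t$ with $e_t$ the reverse of $e_0$ (equivalently, enumerate the candidate transitive orientations of $F_i$ and observe each fails transitivity on some triangle or induced path). This is the easy direction.

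\emph{Sufficiency.} Let $S=(E_{n-m},K_m)$ be a split graph with maximal clique $K=K_m$, independent set $E=E_{n-m}$, and no induced $F_1,F_2,F_3$. I would build a transitive orientation explicitly. Orient $K$ as a chain $u_1<u_2<\cdots<u_m$ for an ordering to be chosen. For $v\in E$, note that in any transitive orientation extending a clique chain the neighbours of $v$ placed below $v$ must form an initial segment and those placed above $v$ a final segment of the \emph{whole} chain — hence $K\setminus N(v)$ must be an interval of the chain — and if $v,w\in E$ share a clique neighbour $u_i$ then $u_i$ must sit on the same side of both $v$ and $w$. The crux is the structural claim that, since $S$ omits $C_4$, $C_5$, $2K_2$ (automatic for split graphs) as well as $F_1,F_2,F_3$, one may choose a single ordering of $K$ for which every $K\setminus N(v)$ is an interval \emph{and}, for each $v$, a compatible split point separating $v$'s below-neighbours from its above-neighbours; reading off the induced partial order yields the orientation, whose transitivity is then checked routinely on triples (two clique vertices; one clique vertex and two vertices of $E$; etc.).

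The step I expect to be the main obstacle is this structural claim. I would argue by contradiction: assuming no admissible ordering-plus-split-point assignment exists, select a minimal witnessing subfamily of two or three vertices of $E$ together with the relevant clique vertices, and run a case analysis on how $N(v),N(w)$ (and possibly $N(x)$) intersect and on which clique vertices separate them — complements nested, crossing, or disjoint, with or without a separating clique vertex — using $2K_2$-, $C_4$- and $C_5$-freeness at each branch to discard most cases and to force exactly the triangles and edges that make the induced subgraph equal to $F_1$, $F_2$, or $F_3$. Finally, I would remark that a non-computational alternative is available: combine Gallai's (infinite) list of forbidden induced subgraphs for comparability graphs with the fact that split graphs are precisely the $\{C_4,C_5,2K_2\}$-free graphs, and check that the only members of Gallai's list that are split graphs are $F_1$, $F_2$, $F_3$.
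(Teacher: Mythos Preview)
The paper does not supply a proof of this theorem: it is stated with a citation to \cite{Gol} and used as a black box, so there is no in-paper argument to compare your proposal against.

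On the merits of your outline: the necessity direction is indeed a routine finite check, and your sufficiency strategy is in the right spirit. Your analysis that, with the clique ordered as a chain $u_1<\cdots<u_m$, each $v\in E$ must have its in-neighbours forming an initial segment, its out-neighbours a final segment, and hence $K\setminus N(v)$ an interval, is correct (the constraints come from transitivity on triples $c\to a\to v$ and $v\to b\to c$ with $c\notin N(v)$). Two remarks. First, the compatibility condition you flag is genuinely needed and is slightly stronger than you state: if $u\in K$ is placed \emph{below} some $v\in E$ (i.e.\ $u\to v$), then $u$ cannot be placed \emph{above} any $w\in E$ with $u\in N(w)$, since $w\to u\to v$ would force the missing edge $wv$; so the split points for different $v,w$ interact through shared clique neighbours, and your case analysis must track this. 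Second, the heart of the proof --- producing the ordering and split points from $\{B_1,B_2,B_3\}$-freeness --- is only sketched; in Golumbic's treatment this is done via the theory of $\Gamma$-implication classes (your ``forcing'' relation) rather than by an ad hoc case split on two or three vertices of $E$, and that machinery is what makes the argument go through cleanly. Your alternative route via Gallai's list intersected with $\{C_4,C_5,2K_2\}$-free graphs is also standard and would work, but again the verification that exactly $B_1,B_2,B_3$ survive is a nontrivial (if finite) check that you would need to carry out.
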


\begin{figure}[h]
\begin{center}
\begin{tabular}{ccccc}
\begin{tikzpicture}[node distance=0.8cm,auto,main node/.style={fill,circle,draw,inner sep=0pt,minimum size=5pt}]

\node[main node] (1) {};
\node[main node] (2) [below of=1] {};
\node[main node] (3) [below left of=2] {};
\node[main node] (4) [below right of=2] {};
\node[main node] (5) [below of=3] {};
\node[main node] (6) [below of=4] {};
\node (8) [left of=2] {$B_1=$};
\path
(3) edge (2)
(3) edge (4)
(3) edge (5);

\path
(4) edge (2)
(4) edge (6);

\path
(1) edge (2);
\end{tikzpicture}

& 

&

\begin{tikzpicture}[node distance=1cm,auto,main node/.style={fill,circle,draw,inner sep=0pt,minimum size=5pt}]

\node[main node] (1) {};
\node[main node] (2) [below left of=1] {};
\node[main node] (3) [below right of=1] {};
\node[main node] (4) [below left of=2] {};
\node[main node] (5) [below right of=2] {};
\node[main node] (6) [below right of=3] {};
\node (8) [left of=2] {$B_2=$};

\path
(1) edge (4)
(1) edge (6);

\path
(5) edge (2)
(5) edge (3)
(5) edge (4)
(5) edge (6);

\path
(2) edge (3);
\end{tikzpicture}

& 

&

\begin{tikzpicture}[node distance=1cm,auto,main node/.style={fill,circle,draw,inner sep=0pt,minimum size=5pt}]

\node[main node] (1) {};
\node[main node] (2) [right of=1] {};
\node[main node] (3) [right of=2] {};
\node[main node] (4) [below left of=2] {};
\node[main node] (5) [below right of=2] {};
\node[main node] (6) [below of=4] {};
\node[main node] (7) [below of=5] {};
\node (8) [left of=4] {$B_3=$};

\path
(1) edge (3)
(1) edge (4);

\path
(2) edge (4)
(2) edge (5);

\path
(4) edge (5)
(4) edge (6);

\path
(5) edge (3)
(5) edge (7);
\end{tikzpicture}

\end{tabular}
\end{center}
\vspace{-5mm}
\caption{Forbidden induced subgraphs for split comparability graphs}\label{forbidden-split-comp}
\end{figure}
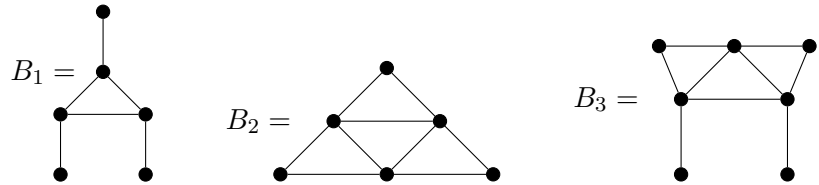

It is known that any comparability graph is word-representable, and such a graph on $n$ vertices can be represented by a word, which is a concatenation of (several) permutations of length $n$ \cite{KS08,KL15}. Thus, when studying word-representability of a split graph, we can assume that one of the graphs in Figure~\ref{forbidden-split-comp} is present as an induced subgraph, because otherwise the split graph in question is a comparability graph and thus is word-representable. 

\section{Word-Representable graphs}\label{sec2}

Suppose that $w$ is a word over some alphabet and $x$ and $y$ are two distinct letters in $w$. We say that $x$ and $y$ {\em alternate} in $w$ if after deleting in $w$ {\em all} letters {\em but} the copies of $x$ and $y$ we either obtain a word $xyxy\cdots$ (of even or odd length) or a word $yxyx\cdots$ (of even or odd length). For example, in the word 23125413241362, the letters 2 and 3 alternate. So do the letters 5 and 6, while the letters 1 and 3 do {\em not} alternate.  

\begin{definition}\label{wrg-def} A graph $G=(V,E)$ is {\em word-representable} if and only if there exists a word $w$
over the alphabet $V$ such that letters $x$ and $y$, $x\neq y$, alternate in $w$ if and only if $xy\in E$. (By definition, $w$ {\em must} contain {\em each} letter in $V$.) We say that $w$ {\em represents} $G$, and that $w$ is a {\em word-representant}. \end{definition}

Definition~\ref{wrg-def} works for both vertex-labeled and unlabeled graphs because any labeling of a graph $G$ is equivalent to any other labeling of $G$ with respect to word-representability (indeed, the letters of a word $w$ representing $G$ can always be renamed). For example, the graph to the left in Figure~\ref{wrg-ex} is word-representable because its labeled version to the right in Figure~\ref{wrg-ex} can be represented by 1213423. For another example, each {\em complete graph} $K_n$ can be represented by any permutation $\pi$ of $\{1,2,\ldots,n\}$, or by $\pi$ concatenated any number of times.   Also, the {\em empty graph} $E_n$ (also known as {\em edgeless graph}, or {\em null graph}) on vertices $\{1,2,\ldots,n\}$ can be represented by $12\cdots (n-1)nn(n-1)\cdots 21$, or by any other permutation concatenated with the same permutation written in the reverse order.

\begin{figure}[h]
\begin{center}
\begin{tabular}{ccc}
\begin{tikzpicture}[node distance=1cm,auto,main node/.style={fill,circle,draw,inner sep=0pt,minimum size=5pt}]

\node[main node] (1) {};
\node[main node] (2) [below left of=1] {};
\node[main node] (3) [below right of=1] {};
\node[main node] (4) [below right of=2] {};

\path
(1) edge (2)
(1) edge (3);

\path
(2) edge (3)
(2) edge (4);
\end{tikzpicture}

& 

\ \ \ \ \ \ \

&

\begin{tikzpicture}[node distance=1cm,auto,main node/.style={circle,draw,inner sep=1pt,minimum size=2pt}]

\node[main node] (1) {{\tiny 3}};
\node[main node] (2) [below left of=1] {{\tiny 2}};
\node[main node] (3) [below right of=1] {{\tiny 4}};
\node[main node] (4) [below right of=2] {{\tiny 1}};

\path
(1) edge (2)
(1) edge (3);

\path
(2) edge (3)
(2) edge (4);

\end{tikzpicture}

\end{tabular}
\end{center}
\vspace{-5mm}
\caption{An example of a word-representable graph}\label{wrg-ex}
\end{figure}
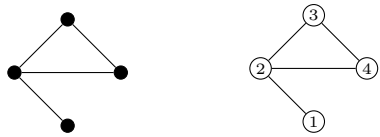

An orientation of a graph is {\em transitive} if presence of edges $u\rightarrow v$ and $v\rightarrow z$ implies presence of the edge $u\rightarrow z$.  An unoriented graph is a {\em comparability graph} if it admits a transitive orientation. It is well known \cite[Section 3.5.1]{KL15}, and is not difficult to show that the smallest non-comparability graph is the cycle graph $C_5$.

The following important result reveals the structure of neighbourhoods of vertices in a word-representable graph.

\begin{theorem}[\cite{KP08}]\label{neighbourhood} If a graph $G$ is word-representable then the neighbourhood of each vertex in $G$ is a comparability graph. \end{theorem}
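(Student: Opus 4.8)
The plan is to reduce the statement to one about a dominating vertex and then read a transitive orientation off a representant. Since word-representability is hereditary, I would take a word $w$ representing $G$ and restrict it to the alphabet $\{v\}\cup N(v)$; the restriction $w'$ represents the induced subgraph $H:=G[\{v\}\cup N(v)]$, in which $v$ is adjacent to every other vertex, and $G[N(v)]=H-v$. So it suffices to prove: if a word-representable graph $H$ has a vertex $v$ adjacent to all the others, then $H-v$ is a comparability graph.

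The next step is to analyse the positions of $v$. Write $w'=A_0\,v\,A_1\,v\cdots v\,A_t$, where $t\ge 1$ is the number of occurrences of $v$ and the $A_i$ are words over $N(v)$. Since $v$ alternates with every $x\in N(v)$, inspecting the $\{v,x\}$-subword forces: (i) $x$ occurs at most once in each $A_i$; (ii) the set of indices $i$ with $x\in A_i$ is an interval; and, when $t\ge 2$, (iii) that interval contains $\{1,\dots,t-1\}$. Hence for $t\ge 2$ each of the $t-1\ge 1$ internal blocks $A_1,\dots,A_{t-1}$ is a permutation of $N(v)$, while $A_0$ and $A_t$ are partial permutations; deleting all copies of $v$ gives $w''=A_0A_1\cdots A_t$, which represents $G[N(v)]$. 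I would then orient an edge $xy$ of $G[N(v)]$ according to the relative order of $x$ and $y$ in the internal blocks. The crux is a precise edge description: $xy\in E$ if and only if $x$ and $y$ occur in the same order, say $x$ before $y$, throughout $A_1,\dots,A_{t-1}$, and moreover ($x$ occurs in $A_0$) implies ($y$ occurs in $A_0$ with $x$ preceding $y$ there), and ($y$ occurs in $A_t$) implies ($x$ occurs in $A_t$ with $x$ preceding $y$ there). Granting this, if $x\to y$ and $y\to z$, then $x$ precedes $z$ in every internal block, and the boundary conditions propagate: $x$ in $A_0$ $\Rightarrow$ $y$ in $A_0$ $\Rightarrow$ $z$ in $A_0$ with the orders composing, and symmetrically for $A_t$; so $xz$ is an edge and is oriented $x\to z$, proving transitivity.

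The remaining case is $t=1$ (so $v$ occurs once and there is no internal permutation block), which I would treat directly: $N(v)$ splits into the letters occurring in both $A_0,A_1$, only in $A_0$, or only in $A_1$; the latter two classes jointly induce a clique, the first induces the comparability graph of the two-dimensional poset given by common order in $A_0$ and $A_1$, and the cross-edges admit a compatible orientation. One could instead sidestep this using the known fact that a word-representable graph can always be represented by a word in which every letter occurs the same number of times: this makes $t$ equal to that common multiplicity, so $t\ge 2$ unless $H$ is complete. I expect the real obstacle to be the bookkeeping in the $t\ge 2$ case, namely making the edge characterization tight enough that the two boundary blocks $A_0$ and $A_t$ cannot break transitivity — a naive "order of first occurrence" orientation already fails on an induced $P_3$, so the effect of $A_0$ and $A_t$ must be controlled with care.
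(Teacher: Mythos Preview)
The paper does not prove this theorem; it is quoted from \cite{KP08} and used as a black box, so there is no in-paper argument to compare your proposal against.

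For what it is worth, your argument is correct and is essentially the standard one. The reduction to a dominating vertex is the right first move, and the block decomposition $w'=A_0\,v\,A_1\,v\cdots v\,A_t$ together with the observation that $A_1,\dots,A_{t-1}$ are full permutations of $N(v)$ when $t\ge 2$ is exactly what drives the proof. Your edge characterisation is accurate (alternation in $A_0A_1\cdots A_t$ forces the common order in the internal blocks plus precisely the two boundary implications you wrote), and the transitivity check composes just as you say: $x\to y\to z$ propagates both the internal order and the $A_0$/$A_t$ membership-and-order conditions, so $xz$ is an edge oriented $x\to z$.

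You can shorten the write-up by taking a $k$-uniform representant from the outset, as you already note. Then $t=k$, each $x\in N(v)$ lies in exactly one of $A_0,A_k$ (so these two blocks are complementary partial permutations of $N(v)$), the case $t=1$ collapses to $k=1$ where $H$ is complete, and the boundary bookkeeping becomes a one-line check rather than a case split. This is cleaner than the separate $t=1$ analysis you sketch.
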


Theorem~\ref{neighbourhood} allows to construct examples of non-word-representable graphs. For example, the {\em wheel graph} $W_5$, obtained from the cycle graph $C_5$ by adding an apex (all-adjacent vertex) is the minimum (by the number of vertices) non-word-representable graph. As mentioned above, $W_5$ is the only non-word-representable graph on 6 vertices.  

The converse to Theorem~\ref{neighbourhood} is {\em not} true as demonstrated by the counterexamples in Figure~\ref{2-counterex} taken from \cite{HKP10} and \cite{CKL17}, respectively.

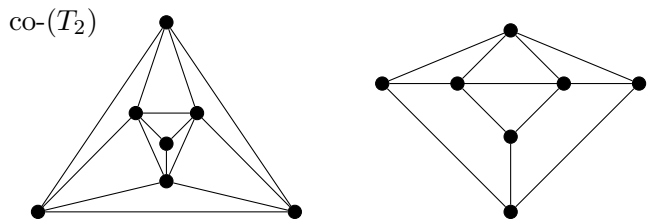
\begin{figure}
\begin{center}
\begin{tabular}{ccc}

\begin{tikzpicture}[node distance=1cm,auto,main node/.style={fill,circle,draw,inner sep=0pt,minimum size=5pt}]

\node[main node] (1) {};
\node[main node] (2) [above left of=1,yshift=-0.3cm,xshift=0.3cm] {};
\node[main node] (3) [above right of=1,yshift=-0.3cm,xshift=-0.3cm] {};
\node[main node] (4) [below of=1,yshift=0.5cm] {};
\node[main node] (5) [above right of=2,xshift=-0.3cm,yshift=0.5cm] {};
\node[main node] (6) [below right of=4,xshift=1cm,yshift=0.3cm] {};
\node[main node] (7) [below left of=4,xshift=-1cm,yshift=0.3cm] {};
\node (8) [left of=5,xshift=-0.5cm] {co-($T_2$)};

\path
(5) edge (7) 
(5) edge (2) 
(5) edge (3) 
(5) edge (6) 
(2) edge (3) 
(2) edge (1) 
(2) edge (4) 
(2) edge (7) 
(1) edge (3) 
(1) edge (4) 
(6) edge (3) 
(6) edge (4) 
(6) edge (7)
(4) edge (7)
(4) edge (3);

\end{tikzpicture}

&
\ \ 
&

\begin{tikzpicture}[node distance=1cm,auto,main node/.style={fill,circle,draw,inner sep=0pt,minimum size=5pt}]
\node[main node] (1) {};
\node[main node] (2) [below left of=1] {};
\node[main node] (3) [below right of=1] {};
\node[main node] (4) [left of=2] {};
\node[main node] (5) [right of=3] {};
\node[main node] (6) [below right of=2] {};
\node[main node] (7) [below of=6] {};

\path
(4) edge (5) 
(1) edge (5) 
(1) edge (4) 
(1) edge (2) 
(1) edge (3) 
(6) edge (2) 
(6) edge (3) 
(6) edge (7) 
(7) edge (4) 
(7) edge (5);

\end{tikzpicture}

\end{tabular}
\end{center}
\vspace{-5mm}
\caption{Non-word-representable graphs in which each neighbourhood is a comparability graph}\label{2-counterex}
\end{figure}

\section{Semi-transitive orientations}\label{sec4}

A {\em shortcut} is an {\em acyclic non-transitively oriented} graph obtained from a directed cycle graph forming a directed cycle on at least four vertices by changing the orientation of one of the edges, and possibly by adding more directed edges connecting some of the vertices (while keeping the graph be acyclic and non-transitive). Thus, any shortcut  

\begin{itemize}

\item is {\em acyclic} (that it, there are {\em no directed cycles});

\item has {\em at least} 4 vertices;

\item has {\em exactly one} source (the vertex with no edges coming in), {\em exactly one} sink (the vertex with no edges coming out), and a {\em directed path} from the source to the sink that goes through {\em every} vertex in the graph;

\item has an edge connecting the source to the sink that we refer to as the {\em shortcutting edge};

\item is {\em not} transitive (that it, there exist vertices $u$, $v$ and $z$ such that $u\rightarrow v$ and $v\rightarrow z$ are edges, but there is {\em no} edge $u\rightarrow z$).

\end{itemize}

\begin{definition} An orientation of a graph is {\em semi-transitive} if it is {\em acyclic} and 
{\em shortcut-free}.\end{definition}

It is easy to see from definitions that {\em any} transitive orientation is necessary  semi-transitive. The converse is {\em not} true. Thus semi-transitive orientations generalize transitive orientations. We will use the following simple lemma. 

\begin{lemma}\label{lem-tran-orie} Let $K_m$ be a clique in a graph $G$. Then any acyclic orientation of $G$ induces a transitive orientation on $K_m$. In particular, any semi-transitive orientation of $G$ induces a transitive orientation on $K_m$. In either case, the orientation induced on $K_m$ contains a single source and a single sink. \end{lemma}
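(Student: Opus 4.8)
The plan is to prove the three assertions in order, each reducing to the previous one. \textbf{First}, I would show that any acyclic orientation of $G$ induces a transitive orientation on the clique $K_m$. Suppose $u\to v$ and $v\to z$ are edges of the induced orientation on $K_m$, with $u,v,z$ distinct. Since $K_m$ is a clique, $u$ and $z$ are adjacent, so exactly one of $u\to z$ or $z\to u$ is present. If $z\to u$ were present, then $u\to v\to z\to u$ would be a directed triangle, contradicting acyclicity. Hence $u\to z$ is present, which is exactly transitivity. \textbf{Second}, the "in particular" clause is immediate: a semi-transitive orientation is by definition acyclic, so it induces an acyclic — hence, by the first part, transitive — orientation on $K_m$.

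\textbf{Third}, I would establish that a transitive orientation of a clique $K_m$ has a unique source and a unique sink. The cleanest route is to observe that a transitive orientation of $K_m$ is a (strict) total order: write $x < y$ iff $x\to y$. Transitivity of the orientation is transitivity of $<$; antisymmetry and totality follow because any two distinct vertices are joined by exactly one directed edge; and irreflexivity is automatic. A finite nonempty totally ordered set has a unique minimum element (the source, with no incoming edges) and a unique maximum element (the sink, with no outgoing edges). Alternatively, without invoking total orders: acyclicity already guarantees at least one source and at least one sink; if there were two distinct sources $s_1,s_2$, the edge between them would point into one of them, contradicting its being a source, and symmetrically for sinks.

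I do not anticipate a serious obstacle here; the only point requiring a little care is making sure the argument in the first step uses that $u,v,z$ are pairwise distinct (so that the relevant edges genuinely exist in the clique) and that "acyclic" is being applied to the length-three directed cycle $u\to v\to z\to u$. Everything else is a direct unwinding of the definitions of transitivity, acyclicity, and clique.
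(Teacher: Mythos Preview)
Your proof is correct and complete. It differs from the paper's argument in an interesting way, so a brief comparison is worthwhile.

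The paper invokes the well-known fact that every tournament contains a Hamiltonian path; once one has such a path $v_1\to v_2\to\cdots\to v_m$ in an acyclic tournament, every remaining edge $v_iv_j$ with $i<j$ is forced to be oriented $v_i\to v_j$ (else a directed cycle arises), which simultaneously yields transitivity and identifies $v_1$, $v_m$ as the unique source and sink. Your argument bypasses Hamiltonian paths entirely: you establish transitivity directly by the directed-triangle observation, and then extract the unique source and sink either via the total-order interpretation or by the simple ``two sources would have an edge between them'' contradiction.

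Your route is more self-contained and elementary, since it does not appeal to any external tournament result. The paper's route is terser on the page but leans on a nontrivial (if standard) fact. Functionally both arguments deliver the same information, and in fact your total-order viewpoint makes the later use of the lemma (labelling the clique vertices $1,2,\ldots,m$ along $\vec{P}$) just as transparent as the Hamiltonian-path viewpoint does.
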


\begin{proof} Oriented $K_m$ is called a tournament, and it is well known, and is not difficult to prove that any tournament contains a Hamiltonian path, that is, a path going through each vertex exactly once. Taking into account that the orientation of $K_m$ is acyclic, it must be transitive with the unique source and sink given by the Hamiltonian path. \end{proof}

A key result in the theory of word-representable graphs is the following theorem. 

\begin{theorem}[\cite{HKP16}]\label{key-thm} A graph is word-representable if and only if it admits a semi-transitive orientation. \end{theorem}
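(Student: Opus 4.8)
The statement is an equivalence, and I would prove the two implications by rather different means.

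\textbf{Word-representable $\Rightarrow$ semi-transitive orientation.} Given a word $w$ representing $G$, I would first pass to a $k$-uniform representant of $G$ for some $k\ge 2$ (one exists by \cite{KP08}, and a $k$-uniform representant can be repeated to become $2k$-uniform without changing the represented graph). Then I read off the \emph{initial-permutation orientation}: orient each edge $xy$ from $x$ to $y$ precisely when the first occurrence of $x$ in $w$ precedes the first occurrence of $y$. Acyclicity is immediate, since a directed cycle would force a cyclic chain of strict inequalities among first-occurrence positions. For the shortcut-free property I would use the easy reformulation of the definition: an acyclic orientation is semi-transitive if and only if for every directed path $v_0\to v_1\to\cdots\to v_t$ with $v_0v_t\in E$ the vertices $v_0,\dots,v_t$ induce a transitive tournament. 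Assuming toward a contradiction such a path together with a missing chord $v_pv_q$ with $p<q$, the plan is to chase positions of occurrences: since $v_p$ and $v_q$ do not alternate, their restriction in $w$ contains two consecutive equal letters, while each consecutive pair $v_iv_{i+1}$ along the path alternates with $v_i$ occurring first, and so does the pair $v_0,v_t$; tracking the successive occurrences of $v_p,\dots,v_q$ through these interleavings and locating the blocking repeated letter should force $v_p$ and $v_q$ to alternate after all. This is an elementary but somewhat fiddly case analysis, and it is essentially the only content of this direction.

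\textbf{Semi-transitive orientation $\Rightarrow$ word-representable.} This is the substantial implication, and I would argue by induction on $|V(G)|$. First reduce to $G$ connected: for a disjoint union $G=G_1\sqcup G_2$, take $k$-uniform representants of $G_1$ and $G_2$ with a common $k\ge 2$ and concatenate them, so that letters from different components occur entirely in one block before the other and hence do not alternate. For connected $G$, pick a source $v$, so that all edges at $v$ are directed away from it; the orientation induced on $G-v$ is acyclic and still shortcut-free (any shortcut in $G-v$ would also be one in $G$), hence semi-transitive, so by induction $G-v$ has a representant. The crux is to reinsert $v$: one must splice copies of $v$ into a representant of $G-v$ so that $v$ alternates with exactly its out-neighbours and with no other vertex. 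An arbitrary representant of $G-v$ need not leave room for this, so the induction hypothesis should be strengthened to output a representant of $G-v$ whose internal structure is synchronized with the orientation (or the representant must be rewritten before the splicing); what makes this possible is precisely the shortcut-free property, which constrains how the out-neighbourhood of $v$ sits inside the word. Identifying the right invariant to propagate through the induction, and verifying that semi-transitivity supplies it, is where essentially all the work lies, and is the step I expect to be the main obstacle.

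Finally, I would record why an obvious shortcut fails, to motivate the care above: concatenating all topological orders of the orientation produces a word representing the transitive closure of the orientation rather than $G$ itself — for the path $x-a-y$ oriented as $x\to a\to y$ the only topological order is $x\,a\,y$, in which the three letters pairwise alternate, so this construction would represent $K_3$.
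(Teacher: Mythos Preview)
The paper does not prove this theorem at all: it is quoted from \cite{HKP16} and used as a black box, so there is no ``paper's own proof'' to compare your attempt against.

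That said, your outline is broadly in the spirit of the original argument in \cite{HKP16}, but as you yourself flag, the substantive direction (semi-transitive $\Rightarrow$ word-representable) is not actually proved in your proposal. You correctly identify that an unadorned induction on $|V|$ by deleting a source and ``splicing $v$ back in'' does not go through without a strengthened hypothesis, and you stop short of saying what that strengthening is or why semi-transitivity supplies it. In \cite{HKP16} this is handled not by a bare induction but by an explicit constructive procedure that, given a semi-transitive orientation, builds a $k$-uniform word (with $k$ bounded in terms of $|V|$); the semi-transitivity is used at each insertion to guarantee that the neighbours of the new vertex occupy a pattern compatible with alternation. Your sketch of the forward direction is closer to complete: the first-occurrence orientation is standard and the ``position-chasing'' argument you allude to does work, though it is cleaner to argue directly from a uniform representant rather than invoking a case analysis on a repeated letter. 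Your closing remark about why concatenating all topological orders fails is correct and a useful sanity check.
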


A corollary of Theorem~\ref{key-thm} is the following useful for us theorem.

\begin{theorem}[\cite{HKP16}]\label{3-col-thm} Any $3$-colorable graph is word-representable. \end{theorem}

\begin{remark} By Theorem~\ref{3-col-thm} below we can assume that $m\geq 4$, because otherwise $S_n$ is $3$-colorable and thus is word-representable. \end{remark}

\section{Preliminaries}

We begin with a result that allows us to assume in our studies that the size of a maximal clique in a split graph is at least 4.

\begin{theorem} Let $S_n=(E_{n-m},K_m)$ be a split graph and $m\leq 3$. Then $S_n$ is word-representable. \end{theorem}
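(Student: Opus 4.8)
The plan is to reduce the claim to the already-available Theorem~\ref{3-col-thm}, i.e.\ to show that a split graph $S_n=(E_{n-m},K_m)$ with $m\le 3$ is $3$-colorable, hence word-representable. First I would observe that a proper colouring of the clique $K_m$ needs exactly $m$ colours, and since $m\le 3$ we have three colours $\{1,2,3\}$ available, with at least one colour (colour $3$, say, in the case $m\le 2$, or exactly the colours $1,2,3$ used on $K_m$ when $m=3$) to play with on the independent set. The key point is that every vertex $v$ in the independent set $E_{n-m}$ has its entire neighbourhood inside the clique $K_m$, so $v$ sees at most $m\le 3$ distinct colours; but in fact I want to argue it sees at most $2$ colours that I cannot use, leaving one free.

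The cleanest way to run this is: colour $K_m$ with colours $1,\dots,m$. Now take any $v\in E_{n-m}$. Its neighbours form a (possibly empty) subset $N(v)\subseteq V(K_m)$, and because the partition is taken to be maximal, $|N(v)|\le m-1$, so there is at least one colour in $\{1,\dots,m\}\subseteq\{1,2,3\}$ not appearing on $N(v)$; assign that colour to $v$. Doing this independently for every vertex of the independent set yields a proper $3$-colouring of $S_n$, since the only edges are those inside $K_m$ (properly coloured by construction) and those from independent-set vertices into $K_m$ (each independent-set vertex avoids the colours of its clique-neighbours). Then Theorem~\ref{3-col-thm} finishes the proof.

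I should double-check the degenerate cases $m=1$ and $m=2$: if $m=1$ the graph is edgeless and trivially $3$-colorable; if $m=2$ the graph is a ``book''/star-like graph and again each independent vertex has at most one neighbour, so a free colour exists. The only mildly delicate point — and the one I'd phrase carefully — is the use of maximality of the partition to guarantee $|N(v)|\le m-1$ strictly, which is exactly the convention $S_n=(E_{n-m},K_m)$ recalled at the start of Section~2; without it an independent-set vertex adjacent to all of $K_m$ could be absorbed into a larger clique, contradicting maximality. I do not anticipate a genuine obstacle here: the whole statement is essentially a remark, and the substance is entirely carried by Theorem~\ref{3-col-thm}; the proof is a two-line colouring argument plus a citation.
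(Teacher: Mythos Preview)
Your proposal is correct and follows exactly the paper's route: the paper's proof is the one-liner ``$S_n$ is clearly $3$-colorable, and thus, by Theorem~\ref{3-col-thm}, it is word-representable,'' and you have simply unpacked the word ``clearly'' into an explicit colouring argument (including the care about maximality to ensure $|N(v)|\le m-1$, which matches the paper's standing convention). There is nothing to add; your version is a more detailed rendering of the same idea.
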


\begin{proof} $S_n$ is clearly 3-colorable, and thus, by Theorem~\ref{3-col-thm}, it is word-representable.  \end{proof}

The following lemma allows us to assume in our studies that (i) each vertex in a split graph is of degree at least 2, and (ii) no two vertices have the same set of neighbours. 

\begin{lemma}\label{lemma-assumptions} Let $S_n=(E_{n-m},K_m)$ be a split graph, and a spit graph $S_{n+1}$ is obtained from $S_n$ by either adding a vertex of degree $0$ (to $E_{n-m}$), or adding a vertex of degree $1$ (to $E_{n-m}$), or by ``coping'' a vertex (either in $E_{n-m}$ or in $K_m$), that is, by adding a vertex whose neighbourhood is identical to the neighbourhood of a vertex in $S_n$. Then $S_n$ is word-representable if and only if $S_{n+1}$ is word-representable. \end{lemma}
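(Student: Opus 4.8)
The plan is to prove each of the three operations preserves word-representability in both directions, handling the easy direction (restriction of a word-representant) uniformly and the constructive direction case by case.

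\medskip

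\textbf{Easy direction.} If $S_{n+1}$ is word-representable, then since word-representable graphs form a hereditary class (stated in the introduction), deleting the newly added vertex shows $S_n$ is word-representable. So in all three cases it remains to prove the forward implication: given a word $w$ representing $S_n$, construct a word $w'$ representing $S_{n+1}$.

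\medskip

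\textbf{Adding an isolated vertex.} Suppose $S_{n+1}$ is obtained by adding a vertex $v$ of degree $0$ to $E_{n-m}$. Let $w$ represent $S_n$. The idea is to append the new letter $v$ twice, once at the very beginning and once at the very end, so that $v$ fails to alternate with every letter that occurs in $w$ (each such letter occurs at least once strictly between the two copies of $v$, and since it occurs in $w$ at least once more it cannot alternate with $v$ — more carefully, one places $v$ so that the projection to $\{v,x\}$ is $vx\cdots xv$, which is non-alternating as long as $x$ occurs at least twice, so if some letter occurs only once in $w$ one first duplicates it using the copying construction, or more simply one notes every vertex of $S_n$ has degree $\geq 1$ after the copying reductions; alternatively, use $w' = vwwv$ reversed appropriately). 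Cleanly: take $w' = v\,w^{\mathrm{r}} \, w\, v$ style is overkill — it is simplest to argue that since $E_n$ on the vertex set of $S_n$ is word-representable by a permutation concatenated with its reverse, one can first assume $w$ has the form where every letter appears at least twice, and then $vwv$ works. I would present this as: replace $w$ by $ww$ if necessary (which still represents $S_n$ since doubling preserves all alternation relations within $w$), then $v(ww)v$ represents $S_{n+1}$.

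\medskip

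\textbf{Adding a pendant vertex.} Suppose $v$ of degree $1$ is added to $E_{n-m}$, adjacent only to vertex $u$. Let $w$ represent $S_n$; again replace $w$ by $ww$ so every letter appears at least twice. Now insert $v$ immediately before the first occurrence of $u$ and immediately after the last occurrence of $u$: so the projection of $w'$ onto $\{v,u\}$ becomes $v u \cdots u v$. This is not alternating (good, since $vu\notin E$ — wait, but $vu\in E$). I need the projection onto $\{v,u\}$ to BE alternating and onto $\{v,x\}$ for $x\neq u$ to NOT be alternating. So instead: insert a single copy of $v$ directly after the first occurrence of $u$ in $w$. Then $v$ occurs once; its projection with $u$ is $uv\cdots$ but $u$ occurs again after, giving $uvu\cdots$ — still potentially alternating if $u$ occurs exactly twice. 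The robust trick is the standard one: let $p$ be the position of the first occurrence of $u$; write $w = w_1 u w_2$ where $w_1$ contains no $u$. Then set $w' = w_1 u v w_2'$ where we additionally ensure $v$ alternates with $u$ only. Cleanest: use $w' = v\,w_1\,u\,v\,w_2$ when $w_1$ is nonempty? This requires care. I would instead invoke the known closure result that adding a pendant vertex preserves word-representability — but since it must be self-contained here, the argument is: double $w$ to $ww$, let $i$ be the first and $j$ the last occurrence of $u$ in $ww$; insert $v$ right after position $i$ and right before position $j$. Then between the two $v$'s there are only $u$'s interleaved — no, there are other letters too. The correct classical construction: put $v$ right before the first $u$ and that is the only copy; i.e. $w' = w_1 v u w_2$. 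Projection to $\{u,v\}$: $vu u\cdots$ — not alternating unless... Hmm. I will state it as: it is known and straightforward that the class of word-representable graphs is closed under adding a vertex adjacent to at most one existing vertex, and give the clean construction that works: if $w$ represents $G$ and $v$ is to be made adjacent only to $u$, then $w' = u v w$ represents $G + v$ provided $w$ begins with $u$; one can always rotate $w$... no. I'll use: $w'= v w_1 u v u w_2$ where $w = w_1 u w_2$, $u\notin w_1$, after doubling — and verify $v$ alternates with $u$ (projection $vuvu$) and with nothing else.

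\medskip

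\textbf{Copying a vertex.} Suppose $v'$ is added with $N(v') = N(v)$ for some $v \in S_n$. Let $w$ represent $S_n$. The plan is to replace every occurrence of $v$ in $w$ by $v v'$ (or, if $v\in K_m$, handle the $vv'$ non-edge/edge). Two subcases. If $v,v'$ are to be non-adjacent (which is forced when $v\in E_{n-m}$, and is the convention for copying in the independent set): replacing each occurrence of $v$ by the block $vv'$ makes $v$ and $v'$ both "track" each other's alternation behaviour with every other letter $x$ (since $x$ sees the block $vv'$ wherever it used to see $v$, alternation of $x$ with $v$ in $w$ is equivalent to alternation of $x$ with $v$ in $w'$ and with $v'$ in $w'$), while the projection onto $\{v,v'\}$ is $vv'vv'\cdots vv'$, which IS alternating — so we'd get $vv'\in E$, wrong. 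Fix: replace each occurrence of $v$ by $vv'$ except swap to $v'v$ in alternating occurrences, making the $\{v,v'\}$ projection $vv' v'v vv' v'v\cdots$, which is non-alternating, while still every $x\neq v,v'$ sees the same multiset pattern (since $x$ lies outside the blocks). This requires $v$ to occur an even number of times, achieved by first doubling $w$. If instead $v\in K_m$ and $v'$ is to be adjacent to $v$ (copying within the clique, so $v'$ joins the clique): then we want $v,v'$ adjacent, so replace each occurrence of $v$ by $vv'$ consistently; projection onto $\{v,v'\}$ is $vv'vv'\cdots$, alternating, giving $vv'\in E$, correct, and for $x\neq v,v'$: if $xv\in E$ then $x$ alternated with $v$ in $w$, and in $w'$ the occurrences of $v$ are each immediately followed by $v'$, so $x$ alternates with $v$ and with $v'$; if $xv\notin E$, then $x$ did not alternate with $v$, and adjoining $v'$ right after each $v$ cannot create alternation. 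I would write both subcases out and verify the alternation conditions.

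\medskip

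\textbf{Main obstacle.} The genuinely fiddly part is the pendant-vertex direction: getting the new letter $v$ to alternate with exactly its unique neighbour $u$ and with no other letter, uniformly over all word-representants $w$, without assuming anything special about where $u$ sits in $w$. The safe route is to first normalize $w$ (e.g.\ replace $w$ by $ww$ so every letter occurs at least twice and the first and last occurrences of $u$ are distinct) and then splice in two copies of $v$, one just before the first $u$ and one just after the last $u$, so that the $\{u,v\}$-projection is $v\,u\cdots u\,v$ — which is non-alternating; that's the wrong parity again, confirming that one instead inserts $v$ just after the first $u$ and just before the last $u$, giving $u\,v\cdots v\,u$, still non-alternating. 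So in fact the only working placement is a single copy of $v$: insert $v$ immediately after the first occurrence of $u$, yielding $\{u,v\}$-projection $u\,v\,u\cdots$, and then also delete every occurrence of $u$ after the second one — i.e.\ pass to a representant where $u$ occurs exactly twice, consecutively around the spot we want. Since these renormalizations can always be performed (duplicating $w$, then editing the occurrences of a single letter while preserving all alternation relations — a routine check), the obstacle is purely bookkeeping, and I would isolate it as a short sub-claim: "every graph has a word-representant in which a prescribed vertex $u$ occurs exactly twice, with the two copies adjacent." Granting that sub-claim, inserting one $v$ between the two $u$'s finishes the pendant case, and analogous normalizations streamline the other two cases as well.
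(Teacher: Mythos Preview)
Your proposal contains two genuine errors that break the argument.

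\textbf{Doubling does not preserve representation.} You repeatedly ``replace $w$ by $ww$'' as a normalization step, asserting this still represents $S_n$. It does not, in general. Take $w=xyx$, which represents a single edge $xy$ (the projection $xyx$ alternates). Then $ww=xyxxyx$; the projection onto $\{x,y\}$ contains the factor $xx$, so $x$ and $y$ no longer alternate. Concatenation $w\mapsto ww$ only preserves the represented graph when $w$ is \emph{uniform} (every letter occurs the same number of times). You may first pass to a uniform representant---that is a known result---but you never invoke it, so every step that relies on doubling (the isolated-vertex construction $v(ww)v$, the pendant construction, and the parity fix in the copying case) is unjustified as written.

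\textbf{The pendant sub-claim is false.} You reduce the degree-$1$ case to: ``every graph has a word-representant in which a prescribed vertex $u$ occurs exactly twice, with the two copies adjacent.'' But if the two occurrences of $u$ are consecutive in the word, then for every other letter $z$ the projection onto $\{u,z\}$ contains the factor $uu$, so $u$ alternates with nothing. Hence such a representant can exist only when $u$ is isolated---precisely the case where there is nothing to attach. The sub-claim is therefore false for the vertices you care about, and the pendant case collapses.

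For comparison, the paper sidesteps all of this. For the degree-$0$ vertex $x$ it simply takes $xxw$: the initial $xx$ kills alternation of $x$ with every letter of $w$, and nothing inside $w$ is disturbed. For the degree-$1$ case it does not build a word at all but invokes the known fact (via semi-transitive orientations) that joining two word-representable graphs by a single edge yields a word-representable graph. For copying, it cites the module-replacement theorem: substituting a comparability graph (here, a single vertex or an edge) for a vertex of a word-representable graph preserves word-representability. If you want a self-contained word-level proof, the clean route is to first pass to a uniform representant and then perform your block-replacement; but you must state and use uniformity explicitly.
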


\begin{proof} Suppose a vertex $x$ of degree $0$ is added to a word-representable graph $S_n$ having a word-representant $w$. Then the word $xxw$ represents $S_{n+1}$.  

Connecting two word-representable graphs by an edge gives a word-representable graph (see \cite[Section 5.4.3]{KL15}), which is easy to see using semi-transitive orientations and Theorem~\ref{key-thm}. The one vertex graph is word-representable, so the lemma is true for adding a vertex of degree~1. 

Copying a vertex $v$ in $S_n$ (either connected, or not, to $v$) is a particular case of replacing any vertex in a word-representable graph by a {\em module}, which is a comparability graph. It is known (see \cite[Section 5.4.4]{KL15}) that such a replacement gives a word-representable graph, which completes the proof of the lemma. \end{proof}

\begin{definition}\label{def-K-triang} For $\ell\geq 3$, the graph $K^{\triangle}_\ell$ is obtained from the complete graph $K_\ell$ labeled by $1,2,\ldots, \ell$, by adding a vertex $i'$ of degree $2$ connected to vertices $i$ and $i+1$ for each $i\in \{1,2,\ldots,\ell-1\}$. Also, a vertex $\ell'$ connected to the vertices $1$ and $\ell$ is added. See Figure~\ref{graph-K-triang}  for the graph $K^{\triangle}_6$. \end{definition}

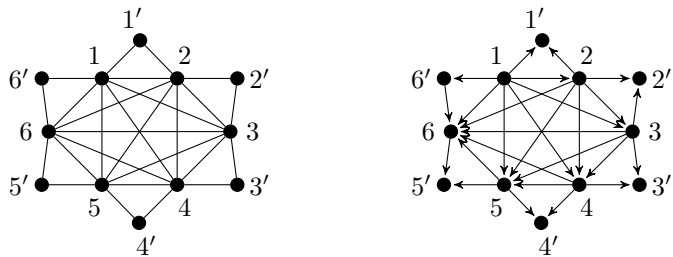
\begin{figure}[h]
\begin{center}
\begin{tabular}{ccc}
\begin{tikzpicture}[node distance=1cm,auto,main node/.style={fill,circle,draw,inner sep=0pt,minimum size=5pt}]

\node[main node] (1) {};
\node[main node] (2) [right of=1] {};
\node[main node] (3) [below right of=2] {};
\node[main node] (4) [below left of=3] {};
\node[main node] (5) [left of=4] {};
\node[main node] (6) [above left of=5] {};

\node[main node] (7) [above right of=1, xshift=-0.2cm, yshift=-0.2cm] {};
\node[main node] (8) [right of=2, xshift=-0.2cm] {};
\node[main node] (9) [right of=4, xshift=-0.2cm] {};
\node[main node] (10) [below left of=4, xshift=0.2cm, yshift=0.2cm] {};
\node[main node] (11) [left of=5, xshift=0.2cm] {};
\node[main node] (12) [left of=1, xshift=0.2cm] {};

\node [above of=1, yshift=-0.7cm,xshift=-0.1cm] {{\small 1}};
\node [above of=2, yshift=-0.7cm,xshift=0.1cm] {{\small 2}};
\node [right of=3, xshift=-0.7cm] {{\small 3}};
\node [below of=4, yshift=0.7cm,xshift=0.1cm] {{\small 4}};
\node [below of=5, yshift=0.7cm,xshift=-0.1cm] {{\small 5}};
\node [left of=6, xshift=0.7cm] {{\small 6}};

\node [above of=7, yshift=-0.7cm,xshift=-0.1cm] {{\small $1'$}};
\node [right of=8, xshift=-0.7cm] {{\small $2'$}};
\node [right of=9, xshift=-0.7cm] {{\small $3'$}};
\node [below of=10, yshift=0.7cm,xshift=0.1cm] {{\small $4'$}};
\node [left of=11, xshift=0.7cm] {{\small $5'$}};
\node [left of=12, xshift=0.7cm] {{\small $6'$}};

\path
(1) edge (2)
(1) edge (3)
(1) edge (4)
(1) edge (5)
(1) edge (6)
(1) edge (12)
(1) edge (7);

\path
(2) edge (3)
(2) edge (4)
(2) edge (5)
(2) edge (6)
(2) edge (7)
(2) edge (8);

\path
(3) edge (4)
(3) edge (5)
(3) edge (6)
(3) edge (8)
(3) edge (9);

\path
(4) edge (5)
(4) edge (6)
(4) edge (9)
(4) edge (10);

\path
(5) edge (6)
(5) edge (10)
(5) edge (11);

\path
(6) edge (11)
(6) edge (12);

\end{tikzpicture}

& 

\ \ \ \ \ \ \

&

\begin{tikzpicture}[->,>=stealth', shorten >=1pt, node distance=1cm,auto,main node/.style={fill,circle,draw,inner sep=0pt,minimum size=5pt}]

\node[main node] (1) {};
\node[main node] (2) [right of=1] {};
\node[main node] (3) [below right of=2] {};
\node[main node] (4) [below left of=3] {};
\node[main node] (5) [left of=4] {};
\node[main node] (6) [above left of=5] {};

\node[main node] (7) [above right of=1, xshift=-0.2cm, yshift=-0.2cm] {};
\node[main node] (8) [right of=2, xshift=-0.2cm] {};
\node[main node] (9) [right of=4, xshift=-0.2cm] {};
\node[main node] (10) [below left of=4, xshift=0.2cm, yshift=0.2cm] {};
\node[main node] (11) [left of=5, xshift=0.2cm] {};
\node[main node] (12) [left of=1, xshift=0.2cm] {};

\node [above of=1, yshift=-0.7cm,xshift=-0.1cm] {{\small 1}};
\node [above of=2, yshift=-0.7cm,xshift=0.1cm] {{\small 2}};
\node [right of=3, xshift=-0.7cm] {{\small 3}};
\node [below of=4, yshift=0.7cm,xshift=0.1cm] {{\small 4}};
\node [below of=5, yshift=0.7cm,xshift=-0.1cm] {{\small 5}};
\node [left of=6, xshift=0.7cm] {{\small 6}};

\node [above of=7, yshift=-0.7cm,xshift=-0.1cm] {{\small $1'$}};
\node [right of=8, xshift=-0.7cm] {{\small $2'$}};
\node [right of=9, xshift=-0.7cm] {{\small $3'$}};
\node [below of=10, yshift=0.7cm,xshift=0.1cm] {{\small $4'$}};
\node [left of=11, xshift=0.7cm] {{\small $5'$}};
\node [left of=12, xshift=0.7cm] {{\small $6'$}};

\path
(1) edge (2)
(1) edge (3)
(1) edge (4)
(1) edge (5)
(1) edge (6)
(1) edge (12)
(1) edge (7);

\path
(2) edge (3)
(2) edge (4)
(2) edge (5)
(2) edge (6)
(2) edge (7)
(2) edge (8);

\path
(3) edge (4)
(3) edge (5)
(3) edge (6)
(3) edge (8)
(3) edge (9);

\path
(4) edge (5)
(4) edge (6)
(4) edge (9)
(4) edge (10);

\path
(5) edge (6)
(5) edge (10)
(5) edge (11);

\path
(6) edge (11);

\path
(12) edge (6);

\end{tikzpicture}

\end{tabular}
\end{center}
\vspace{-5mm}
\caption{The graph $K^{\triangle}_6$ and one of its semi-transitive orientations}\label{graph-K-triang}
\end{figure}

\begin{theorem}\label{thm-K-Tri-m-w-r} $K^{\triangle}_\ell$ is word-representable. \end{theorem}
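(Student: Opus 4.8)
The plan is to exhibit an explicit semi-transitive orientation of $K^{\triangle}_\ell$ and invoke Theorem~\ref{key-thm}. The natural candidate is the one suggested by Figure~\ref{graph-K-triang}: orient the clique $K_\ell$ transitively by $1\to 2\to\cdots\to\ell$ (so every edge $i\to j$ with $i<j$), and for each degree-$2$ vertex $i'$ with $1\le i\le \ell-1$ orient $i\to i'$ and $i'\to i+1$, while for the last vertex $\ell'$ (joined to $1$ and $\ell$) orient $1\to\ell'$ and $\ell'\to\ell$. First I would check acyclicity: the map sending vertex $i$ to $i$ and vertex $i'$ to $i+\tfrac12$ (and $\ell'$ to $\tfrac12$, say, or more carefully to any value strictly between $1$'s and $\ell$'s labels) is a topological order, since every oriented edge goes from a smaller label to a larger one; hence there are no directed cycles.

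Next I would verify there are no shortcuts. By Lemma~\ref{lem-tran-orie} (or directly, since $1\to\cdots\to\ell$ is transitive) the clique itself contains no shortcut, so any putative shortcut must use at least one of the independent-set vertices $i'$. The key structural observation is that each $i'$ has in-degree $1$ and out-degree $1$: its only in-neighbour is $i$ (or $1$, if $i'=\ell'$) and its only out-neighbour is $i+1$ (or $\ell$). Therefore $i'$ can be neither the source nor the sink of a shortcut (a shortcut's source has out-degree $\ge 2$ and its sink has in-degree $\ge 2$, since besides the spanning directed path there is the shortcutting edge), so the source and sink of any shortcut both lie in $K_\ell$, say they are $a$ and $b$ with $a<b$. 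A shortcut on vertex set containing some $i'$ would have to route its spanning directed path through $i'$, entering via $i$ and leaving via $i+1$; but then the subpath from $a$ to $i$ lies in $K_\ell$, the subpath from $i+1$ to $b$ lies in $K_\ell$, and since $a\le i<i+1\le b$ all these clique vertices are distinct and the clique edges $a\to i$, $i\to i+1$ (wait, $i$ and $i+1$ may not be consecutive in the path — but they are adjacent in $K_\ell$), and most importantly the edge $i\to i+1$ is present together with $i'$ adjacent to both $i$ and $i+1$: so the triple $i\to i'\to i+1$ has the chord $i\to i+1$, meaning this triangle is transitively closed. One then argues that \emph{every} non-transitively-closed triple in the orientation is either inside $K_\ell$ (impossible, $K_\ell$ is transitively oriented) or of the form $x\to i\to i'$ or $i'\to i+1\to y$ for clique vertices $x,y$ — and in each such case the needed chord $x\to i'$ or $i'\to y$ is \emph{absent} only because $x$ (resp.\ $y$) is not adjacent to $i'$ at all, so there is no edge to be a shortcutting edge between the relevant source and sink. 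Making this into a clean argument: since in any shortcut there is an edge from source to sink, and the source and sink are clique vertices $a<b$, the shortcutting edge is $a\to b$, a clique edge; but then the shortcut's spanning path from $a$ to $b$ together with $a\to b$ — if this path uses some $i'$, delete $i'$ and use $i\to i+1$ instead, still getting an acyclic non-transitive graph with source $a$, sink $b$, shortcutting edge $a\to b$, i.e.\ a shortcut with fewer $i'$-vertices; iterating, we reduce to a shortcut entirely within $K_\ell$, contradicting that $K_\ell$'s orientation is transitive.

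The main obstacle I anticipate is making the "delete $i'$ and splice in the clique edge $i\to i+1$" reduction rigorous — specifically, checking that after this replacement the graph is still a shortcut (acyclic, one source, one sink, Hamiltonian directed path, still non-transitive) rather than accidentally becoming transitive or disconnected. Non-transitivity is the delicate point: removing a vertex could in principle destroy the unique bad triple. I would handle this by instead arguing \emph{locally}: take any three vertices $u\to v$, $v\to w$ forming a potentially-missing-chord configuration, and check by the case analysis above (is $v$ a clique vertex or a primed vertex? is $u$? is $w$?) that whenever $u\to w$ is \emph{missing}, at least one of $u,w$ is a primed vertex with $uw\notin E$, whence no shortcut can have $u$ as source and $w$ as sink because a shortcut requires the source–sink edge. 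Combined with acyclicity this gives semi-transitivity directly, bypassing the deletion argument; I expect this local case check (roughly four or five small cases) to be the real content of the proof.
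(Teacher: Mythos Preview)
Your proposed orientation is not semi-transitive, so the plan fails at the first step. With $i\to i'$ and $i'\to i+1$ for $1\le i\le\ell-1$, the induced subgraph on $\{1,1',2,3\}$ (available as soon as $\ell\ge 3$) is a shortcut: it is acyclic, has source $1$, sink $3$, Hamiltonian directed path $1\to 1'\to 2\to 3$, shortcutting edge $1\to 3$, and is non-transitive because $1'\to 2$ and $2\to 3$ are present while $1'$ and $3$ are non-adjacent. The same happens at every $i$ via $\{i,i',i+1,i+2\}$.

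The flaw in your ``local'' argument is the identification of the non-transitive triple's endpoints with the source and sink of the shortcut. You note that the missing chord in a triple like $i'\to i+1\to y$ is $i'y$, and since $i'y\notin E$ you conclude there is no shortcutting edge. But the shortcutting edge runs from the \emph{source} to the \emph{sink} of the whole shortcut, and neither of these has to be $i'$: in the example above the bad triple is $1'\to 2\to 3$ while the source--sink edge is $1\to 3$, which certainly exists. Your deletion-and-splice reduction also breaks exactly here, as you yourself flagged: deleting $1'$ from $\{1,1',2,3\}$ leaves the transitive triangle $\{1,2,3\}$, so the reduction does not preserve non-transitivity.

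The fix is to change the orientation of the primed vertices. The paper orients $i\to i'$ \emph{and} $(i+1)\to i'$ for $1\le i\le\ell-1$, making each such $i'$ a \emph{sink} (only $\ell'$ gets $1\to\ell'\to\ell$). Then a primed vertex $i'$ with $i<\ell$ can only appear in a shortcut as its sink, and one checks directly that neither $i\to i'$ nor $(i+1)\to i'$ can be a shortcutting edge; the single pass-through vertex $\ell'$ is handled separately. (Remark~\ref{orient-KmTriang} in the paper explicitly records that the two ``mixed'' orientations of $i'$, including yours, create a shortcut or a cycle.)
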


\begin{proof} In the case of odd $\ell$, it is not difficult to come up with a word representing $K^{\triangle}_\ell$ based on the representation $12\cdots \ell12\cdots \ell$ of $K_\ell$ and  adding $i'$s as follows (where we present the resulting word on two lines):
$$\begin{array}{l}1'121'3'343'5'565'\cdots (\ell-2)'(\ell-2)(\ell-1)(\ell-2)'\ell'\ell1\ell'2'232' \\ 4'454' \cdots (\ell-1)'(\ell-1)\ell(\ell-1)'.\end{array}$$
However, we next provide a semi-transitive orientation of $K^{\triangle}_\ell$ that works for any $\ell$, so that the statement will follow from Theorem~\ref{key-thm}.

First, orient the $K_\ell$ transitively so that there is a directed path $1\rightarrow 2\rightarrow \cdots\rightarrow \ell$ as shown for the case $\ell=6$ in Figure~\ref{graph-K-triang}. Next, for $i\in \{1,2,\ldots,\ell-1\}$ orient the edges incident to $i'$ as $i\rightarrow i'$ and $(i+1)\rightarrow i'$. Finally, orient the edges incident to $m'$ as $1\rightarrow \ell'$ and $\ell'\rightarrow \ell$ as again shown for the case $\ell=6$ in Figure~\ref{graph-K-triang} (see Remark~\ref{orient-KmTriang} about other ways to orient $K^{\triangle}_\ell$ semi-transitively). 

We claim that the orientation obtained is semi-transitive. Indeed, it is easy to see that there are no directed cycles. Furthermore, because $K_\ell$ is transitively oriented, any possible shortcut must involve a vertex $i'$. Clearly, $\ell'\rightarrow \ell$ and $1\rightarrow \ell'$ are  not shortcutting edges because $\ell'$ is neither a sink nor a source. Note that $a<b$ whenever $a\rightarrow b$ for $a,b\in\{1,2,\ldots,\ell\}$. Using this observation, $(i+1)\rightarrow i'$, for $i\in\{1,2,\ldots,\ell-1\}$, is not a shortcutting edge because there is no path from a vertex $(i+1)$ to a vertex $i$. Finally, $i\rightarrow i'$, for $i\in\{1,2,\ldots,\ell-1\}$, cannot be a shortcutting edge because there is no path of length larger than 2 from a vertex $i$ to a vertex $i'$. \end{proof}

\begin{remark}\label{orient-KmTriang} We note that if the orientation of the $K_\ell$ in a $K^{\triangle}_\ell$ is fixed as in the proof of Theorem~\ref{thm-K-Tri-m-w-r}, there are exactly $2^{\ell-1}$ ways to extend the orientation of $K_\ell$ to that of $K^{\triangle}_\ell$ in a semi-transitive way. Indeed, in the proof of Theorem~\ref{thm-K-Tri-m-w-r}, the vertices $i'$ for $i\in\{1,2,\ldots,m-1\}$ were made sinks, but any of these could be made sources (a similar argument as that in the proof of Theorem~\ref{thm-K-Tri-m-w-r} would show that the obtained orientation would be semi-transitive). It can then be shown that any of the three remaining ways to orient the edges $1\ell'$ and $\ell\ell'$ will either result in a directed cycle, or a shortcut. The same situation is with any of the two remaining ways to orient the edges $ii'$ and $(i+1)i$ for $i\in\{1,2,\ldots,\ell-1\}$. For example, making $\ell'$ to be a sink, the edge $1\rightarrow \ell'$ will become a shortcutting edge (the vertices $1$, $\ell-1$, $\ell$ and $\ell'$ induce a shortcut in this case).\end{remark}

\begin{definition}\label{def-Aell} For $\ell\geq 4$, let $A_\ell$ be the graph obtained from $K^{\triangle}_{\ell-1}$ by adding a vertex $\ell$ connected to the vertices $1,2,\ldots,\ell-1$ and no other vertices.  Note that $A_4=T_1$ in Figure~\ref{nonRepTri}. A schematic way to represent a graph $A_\ell$ is shown in Figure~\ref{graph-Aell}. \end{definition}

\begin{theorem}\label{Aell-min-non-repres} $A_\ell$ is a minimal non-word-representable graph. \end{theorem}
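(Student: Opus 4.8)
The plan is to establish two things: (i) $A_\ell$ is not word-representable, and (ii) every proper induced subgraph of $A_\ell$ is word-representable. For (ii), since word-representability is hereditary, it suffices to check that $A_\ell$ minus each single vertex is word-representable. Deleting the apex vertex $\ell$ leaves $K^{\triangle}_{\ell-1}$, which is word-representable by Theorem~\ref{thm-K-Tri-m-w-r}. Deleting a vertex $i'$ of the independent set leaves a graph whose only ``non-comparability-like'' obstruction has been removed; I would argue that $A_\ell\setminus i'$ admits a semi-transitive orientation directly, or exhibit a $3$-coloring (the clique $K_{\ell-1}$ cannot be $3$-colored for $\ell-1\geq 4$, so instead I would build an explicit semi-transitive orientation, reusing the orientation from the proof of Theorem~\ref{thm-K-Tri-m-w-r} on $K^{\triangle}_{\ell-1}$ and then orienting the edges from $\ell$ to $1,\dots,\ell-1$ appropriately — say $\ell$ as a source into the "early" clique vertices, which one can check introduces no shortcut once $i'$ is gone). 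Deleting a clique vertex $j\in\{1,\dots,\ell-1\}$ leaves $K_{\ell-2}$ plus some degree-$\leq 2$ vertices plus the apex $\ell$ now of degree $\ell-2$; by Lemma~\ref{lemma-assumptions} the degree-$1$ and degree-$0$ leftover vertices $i'$ can be stripped, and the resulting split graph should again be handled by an explicit semi-transitive orientation or by observing it reduces to a $K^{\triangle}$-type graph.

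The heart of the argument is (i), non-word-representability. By Theorem~\ref{key-thm} it suffices to show $A_\ell$ admits no semi-transitive orientation. Suppose for contradiction that $O$ is a semi-transitive (in particular acyclic) orientation of $A_\ell$. By Lemma~\ref{lem-tran-orie}, $O$ restricted to the clique $K_{\ell-1}$ on $\{1,\dots,\ell-1\}$ is transitive with a unique source $s$ and unique sink $t$; relabel so that the Hamiltonian path is $1\rightarrow 2\rightarrow\cdots\rightarrow(\ell-1)$, hence every clique edge points from smaller to larger index. Now examine the apex $\ell$: it is adjacent to all of $1,\dots,\ell-1$. I would show that $\ell$ must be either a source or a sink relative to the clique — otherwise there are clique vertices $a<b$ with $a\rightarrow\ell$ and $\ell\rightarrow b$, but also $a\rightarrow b$ is a clique edge, and together with a suitable third vertex this creates a shortcut (one must check acyclicity forces the path structure; the triple $a,\ell,b$ with the direct edge $a\to b$ plus the path $a\to\ell\to b$ of length $2$ is exactly a non-transitive triangle, which by adding the clique vertices between them yields a genuine shortcut on $\geq 4$ vertices). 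So WLOG $\ell$ is a source, i.e. $\ell\rightarrow i$ for all $i$. Then consider the subgraph induced by $\ell$, the clique vertices, and the degree-$2$ vertices $i'$: since $\ell\to 1\to 2\to\cdots\to(\ell-1)$ is a directed Hamiltonian path of the clique-plus-apex, and $1'$ is attached to $1$ and $2$, I would trace through the forced orientations of the edges at each $i'$ and at the special vertex $(\ell-1)'$ (attached to $1$ and $\ell-1$) to derive a shortcut: the long directed path $\ell\to 1\to 2\to\cdots\to(\ell-1)$ together with the edge between $1$ and $(\ell-1)'$ and the edge between $\ell-1$ and $(\ell-1)'$ must, under acyclicity, be oriented so that $(\ell-1)'$ is a source or sink on that pair, and whichever choice is made produces a shortcut with shortcutting edge among $\{1,(\ell-1),(\ell-1)'\}$ plus intermediate clique vertices — this mirrors the computation in Remark~\ref{orient-KmTriang}.

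The main obstacle I anticipate is the case analysis in step (i): pinning down that $\ell$ cannot be an ``internal'' vertex of the clique's transitive order, and then the bookkeeping showing that \emph{every} semi-transitive orientation of $K^{\triangle}_{\ell-1}$ (there are $2^{\ell-2}$ of them by Remark~\ref{orient-KmTriang}, all with the clique oriented as $1\to\cdots\to(\ell-1)$) becomes non-extendable once the apex $\ell$ is forced to be a source (or sink). Concretely, with $\ell$ a source we get a directed path of length $\ell-1$ from $\ell$ through all clique vertices; the vertex $(\ell-1)'$ sits on the ``closing'' edge $1$–$(\ell-1)$ of the cycle structure, and I expect to show the shortcut $\ell\to 1\to\cdots\to(\ell-1)$ (or a sub-path of it) together with $(\ell-1)'$ always violates shortcut-freeness — essentially because the would-be semi-transitive orientation of $K^{\triangle}_{\ell-1}$ needed $(\ell-1)'$ to ``see'' both ends $1$ and $\ell-1$ in a controlled way that the extra apex destroys. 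If the direct case analysis gets unwieldy, a cleaner route is to locate inside $A_\ell$ an induced copy of $A_{\ell-1}$ or of one of the known small non-representable configurations and induct, but I expect $A_\ell$ to be vertex-minimal in a way that blocks a pure induction, so the explicit shortcut-hunting argument above is the one I would carry out.
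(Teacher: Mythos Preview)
Your minimality sketch is fine and matches the paper's approach: delete $\ell$ to get $K^{\triangle}_{\ell-1}$; delete a primed vertex or a clique vertex and embed the result in some $K^{\triangle}_m$. The paper does this slightly more slickly by recognising $A_\ell\setminus i'$ as (an induced subgraph of) $K^{\triangle}_\ell$ with two primed vertices removed, and $A_\ell\setminus y$ as $K^{\triangle}_{\ell-1}$ with one primed vertex removed, but your plan would also work.

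The non-word-representability argument, however, has a real gap, and it is precisely at the step you flagged as the main obstacle. Your claim that $\ell$ must be a source or sink ``relative to the clique'' fails: if $a\to\ell$ and $\ell\to b$ with $a\to b$ a clique edge, the triangle $a,\ell,b$ is \emph{transitive}, not non-transitive, and inserting further clique vertices between $a$ and $b$ only adds more transitive edges. Indeed $\{1,\dots,\ell\}$ is itself a clique, so Lemma~\ref{lem-tran-orie} allows $\ell$ to sit anywhere in the linear order. The information forcing $\ell$ to an extreme position must come from the primed vertices, not from the clique alone.

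Relatedly, your ``relabel so the Hamiltonian path is $1\to 2\to\cdots\to(\ell-1)$'' step is not innocent: after that relabelling, $i'$ is no longer attached to the \emph{new} $i$ and $i+1$ in general, so your later use of ``$(\ell-1)'$ attached to $1$ and $\ell-1$'' is unjustified. The paper handles this by proving, as a first substantial step, that the transitive order on $K_{\ell-1}$ must actually be the natural cyclic order $1\to 2\to\cdots\to(\ell-1)$ up to a cyclic shift: if two consecutive labels $i,i+1$ were separated in the Hamiltonian path, the triangle on $i,i',i+1$ together with a neighbouring path vertex forces a shortcut. Only after this is established does the paper use each quadruple $i,i',i+1,\ell$ to propagate the orientation of the edges $i\ell$ and $(i+1)\ell$, concluding that $\ell$ is a source or sink; the contradiction then comes from the quadruple $1,(\ell-1)',\ell-1,\ell$. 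So the missing idea in your argument is that the degree-$2$ vertices $i'$ are needed \emph{twice}: first to rigidify the order on $K_{\ell-1}$, and then (together with the fact that $\ell$ has no primed neighbour) to push $\ell$ to an end.
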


\begin{figure}[h]
\begin{center}
\begin{tikzpicture}[node distance=1cm,auto,main node/.style={fill,circle,draw,inner sep=0pt,minimum size=5pt}]

\node[main node] (1) {};
\node[main node] (2) [right of=1] {};
\node[main node] (3) [below right of=2] {};
\node[main node] (4) [below left of=3] {};
\node[main node] (5) [left of=4] {};
\node[main node] (6) [above left of=5] {};
\node[main node] (13) [below right of=1, xshift=-0.2cm] {};

\node[main node] (7) [above right of=1, xshift=-0.2cm, yshift=-0.2cm] {};
\node[main node] (8) [right of=2, xshift=-0.2cm] {};
\node[main node] (9) [right of=4, xshift=-0.2cm] {};
\node[main node] (10) [below left of=4, xshift=0.2cm, yshift=0.2cm] {};
\node[main node] (11) [left of=5, xshift=0.2cm] {};
\node[main node] (12) [left of=1, xshift=0.2cm] {};

\node [above of=1, yshift=-0.7cm,xshift=-0.1cm] {{\small 1}};
\node [above of=2, yshift=-0.7cm,xshift=0.1cm] {{\small 2}};
\node [below of=5, yshift=0.7cm,xshift=-0.1cm] {{\small $y$}};
\node [left of=6, xshift=0.4cm] {{\small $\ell-1$}};

\node [right of=9, xshift=-0.7cm] {{\small $x$}};

\node [above of=13, yshift=-0.7cm,xshift=0.1cm] {{\small $\ell$}};

\path
(1) edge (2)
(1) edge (6)
(1) edge (12)
(1) edge (7);

\path
(2) edge (3)
(2) edge (7)
(2) edge (8);

\path
(3) edge (4)
(3) edge (8)
(3) edge (9);

\path
(4) edge (5)
(4) edge (9)
(4) edge (10);

\path
(5) edge (6)
(5) edge (10)
(5) edge (11);

\path
(6) edge (11)
(6) edge (12);

\end{tikzpicture}

\end{center}
\vspace{-5mm}
\caption{A schematic way to represent $A_\ell$}\label{graph-Aell}
\end{figure}
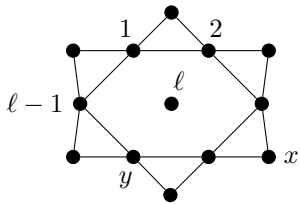

\begin{proof} \noindent
{\bf Minimality.} Because of the symmetries, we only need to consider three cases with a reference to Figure~\ref{graph-Aell}. 

\begin{itemize}
\item Removing the vertex $\ell$ we obtain the graph $K^{\triangle}_{\ell-1}$ which is word-representable by Theorem~\ref{thm-K-Tri-m-w-r}.
\item Removing the vertex $x$ we get a graph isomorphic to the graph obtained from $K^{\triangle}_\ell$ by removing the vertices $1'$ and $2'$. Such a graph is word-representable by  Theorem~\ref{thm-K-Tri-m-w-r} taking into account the hereditary nature of word-representability.
\item Removing the vertex $y$ we get a graph isomorphic to the graph obtained from $K^{\triangle}_{\ell-1}$ by removing the vertex $1'$, which is word-representable by  Theorem~\ref{thm-K-Tri-m-w-r} taking into account the hereditary nature of word-representability.
\end{itemize}

\noindent
{\bf Non-word-representability.} We will show that $A_\ell$ does not admit a semi-transitive orientation, and the result will follow by Theorem~\ref{key-thm}. 

Suppose $A_\ell$ admits a semi-transitive orientation. By Lemma~\ref{lem-tran-orie}, this orientation induces a transitive orientation on the clique of size $\ell-1$ obtained by removing the vertex $\ell$. We claim that without loss of generality, we can assume that the Hamiltonian path on this clique is $1\rightarrow 2\rightarrow \cdots \rightarrow (\ell-1)$, or its cyclic shift (e.g. $2\rightarrow 3 \rightarrow \cdots \rightarrow (\ell-1) \rightarrow 1$, or $3\rightarrow 4 \rightarrow \cdots \rightarrow (\ell-1) \rightarrow 1 \rightarrow 2$, etc). Indeed, if that would not be the case, then changing all orientations to the opposite, if necessary, there must exist $i$ such that 
\begin{itemize}
\item $P_i=i\rightarrow x_1\rightarrow x_2 \rightarrow \cdots \rightarrow x_j \rightarrow (i+1)$ is part of the Hamiltonian path for $j\geq 1$; if $i=(\ell-1)$ then $(i+1):=1$;
\item either $x\rightarrow i$, or $(i+1)\rightarrow y$, or both, are present in the Hamiltonian path for some vertices $x$ and $y$.
\end{itemize}

If the orientation of the edge $i'(i+1)$ is $(i+1) \rightarrow i'$ then this edge, along with $P_i$ and the edge $ii'$ will either induce a directed cycle, or a shortcut; contradiction. Thus, the orientation of $i'(i+1)$ must be $i' \rightarrow (i+1)$. Furthermore, to avoid a shortcut involving the edge $i' \rightarrow (i+1)$ and $P_i$, we must orient the edge $ii'$ as $i\rightarrow i'$. But now, the graph induced by $P_i$, $i\rightarrow i'$, $i' \rightarrow (i+1)$, and $x\rightarrow i$ or $(i+1)\rightarrow y$ (whatever exists) will induce a shortcut. Indeed, in the former case, the edge $x\rightarrow (i+1)$ is present, but the edge $x\rightarrow i'$ is not, while in the latter case, the edge $i\rightarrow y$ is present, while $i'\rightarrow y$ is not. Thus, renaming the vertices, if necessary (which is equivalent to a cyclic shift), we can assume that the partial orientation of the semi-transitively oriented $A_\ell$ is as in the graph to the left in  Figure~\ref{fig-nwr-Aell}, where we do not draw the edges $i\rightarrow j$ for $|j-i|\geq 2$, except for the edge $1\rightarrow (\ell-2)$, to arrange a better look for the figure (although existence of these edges is assumed).

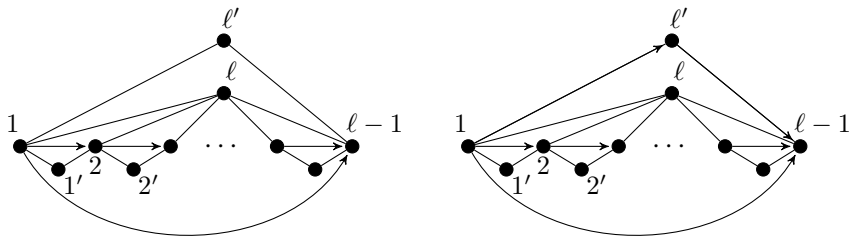
\begin{figure}[h]
\begin{center}
\begin{tabular}{cc}
\begin{tikzpicture}[node distance=1cm,auto,main node/.style={fill,circle,draw,inner sep=0pt,minimum size=5pt}]

\node[main node] (1) {};
\node[main node] (2) [right of=1] {};
\node[main node] (3) [right of=2] {};

\node[main node] (4) [above right of=3] {};

\node[main node] (5) [below right of=4] {};
\node[main node] (6) [right of=5] {};

\node[main node] (7) [below right of=1, xshift=-0.2cm,yshift=0.4cm] {};
\node[main node] (8) [below right of=2, xshift=-0.2cm,yshift=0.4cm] {};
\node[main node] (9) [below right of=5, xshift=-0.2cm,yshift=0.4cm] {};

\node[main node] (10) [above of=4,yshift=-0.3cm] {};

\node [right of=3, xshift=-0.3cm] {$\cdots$};

\node [above of=4, yshift=-0.7cm,xshift=0.1cm] {{\small $\ell$}};
\node [above of=10, yshift=-0.7cm,xshift=0.1cm] {{\small $\ell'$}};
\node [above of=6, yshift=-0.7cm,xshift=0.3cm] {{\small $\ell-1$}};
\node [above of=1, yshift=-0.7cm, xshift=-0.1cm] {{\small $1$}};
\node [below of=2, yshift=0.75cm] {{\small $2$}};
\node [below of=7, xshift=0.2cm, yshift=0.8cm] {{\small $1'$}};
\node [below of=8, xshift=0.2cm, yshift=0.8cm] {{\small $2'$}};

\path
(1) edge [->, >=stealth', shorten >=1pt, bend left=-60] node  {} (6);

\path
(4) edge (1)
(4) edge (2)
(4) edge (3)
(4) edge (5)
(4) edge (6);
\path
(7) edge (1)
(7) edge (2);
\path
(8) edge (2)
(8) edge (3);
\path
(9) edge (5)
(9) edge (6);
\path
(10) edge (1)
(10) edge (6);

\path
(1) [->,>=stealth', shorten >=1pt] edge (2);
\path
(2) [->,>=stealth', shorten >=1pt] edge (3);
\path
(5) [->,>=stealth', shorten >=1pt] edge (6);

\end{tikzpicture}

& 

\begin{tikzpicture}[node distance=1cm,auto,main node/.style={fill,circle,draw,inner sep=0pt,minimum size=5pt}]

\node[main node] (1) {};
\node[main node] (2) [right of=1] {};
\node[main node] (3) [right of=2] {};

\node[main node] (4) [above right of=3] {};

\node[main node] (5) [below right of=4] {};
\node[main node] (6) [right of=5] {};

\node[main node] (7) [below right of=1, xshift=-0.2cm,yshift=0.4cm] {};
\node[main node] (8) [below right of=2, xshift=-0.2cm,yshift=0.4cm] {};
\node[main node] (9) [below right of=5, xshift=-0.2cm,yshift=0.4cm] {};

\node[main node] (10) [above of=4,yshift=-0.3cm] {};

\node [right of=3, xshift=-0.3cm] {$\cdots$};

\node [above of=4, yshift=-0.7cm,xshift=0.1cm] {{\small $\ell$}};
\node [above of=10, yshift=-0.7cm,xshift=0.1cm] {{\small $\ell'$}};
\node [above of=6, yshift=-0.7cm,xshift=0.3cm] {{\small $\ell-1$}};
\node [above of=1, yshift=-0.7cm, xshift=-0.1cm] {{\small $1$}};
\node [below of=2, yshift=0.75cm] {{\small $2$}};
\node [below of=7, xshift=0.2cm, yshift=0.8cm] {{\small $1'$}};
\node [below of=8, xshift=0.2cm, yshift=0.8cm] {{\small $2'$}};

\path
(1) edge [->, >=stealth', shorten >=1pt, bend left=-60] node  {} (6);

\path
(4) edge (1)
(4) edge (2)
(4) edge (3)
(4) edge (5)
(4) edge (6);
\path
(7) edge (1)
(7) edge (2);
\path
(8) edge (2)
(8) edge (3);
\path
(9) edge (5)
(9) edge (6);
\path
(10) edge (1)
(10) edge (6);

\path
(1) [->,>=stealth', shorten >=1pt] edge (2);
\path
(2) [->,>=stealth', shorten >=1pt] edge (3);
\path
(5) [->,>=stealth', shorten >=1pt] edge (6);
\path
(10) [->,>=stealth', shorten >=1pt] edge (6);
\path
(1) [->,>=stealth', shorten >=1pt] edge (10);

\end{tikzpicture}

\end{tabular}
\end{center}
\vspace{-5mm}
\caption{Non-word-representability of $A_{\ell}$}\label{fig-nwr-Aell}
\end{figure}

Now, if $(\ell-1)\rightarrow \ell'$ were an edge, then the edge $1\ell'$ would either be a shortcutting edge (e.g. $2\rightarrow \ell'$ is missing), or would form a cycle taking into account the directed path $1\rightarrow 2\rightarrow \cdots \rightarrow (\ell-1)$. Thus, we must have $\ell' \rightarrow (\ell-1)$, and not to have a shortcut, we must also have $1\rightarrow \ell'$, as shown in the graph to the right in Figure~\ref{fig-nwr-Aell}.

Next, consider the triangle $121'$. Orienting it as $2\rightarrow 1'$ and $1'\rightarrow 1$ gives a cycle, while orienting it as $1\rightarrow 1'$ and $1'\rightarrow 2$ gives a shortcut induced by the vertices $1$, $1'$, $2$ and $3$ with the shortcutting edge $1'\rightarrow 3$.  
 On the other hand, similarly to the proof of Theorem~\ref{thm-K-Tri-m-w-r}, one can see that none of the orientations $1\rightarrow 1'$ and $2\rightarrow 1'$, or $1'\rightarrow 1$ and $1'\rightarrow 2$, results in a shortcut or a cycle. Similarly, no matter which of these orientations is selected, when considering the graph induced by the vertices 1, 2, $1'$ and $\ell$, we see that the orientation of the edges $1\ell$ and $2\ell$ must either be $1\rightarrow \ell$ and $2\rightarrow \ell$, or $\ell\rightarrow 1$ and $\ell\rightarrow 2$. 
 
Similar arguments as above can be applied to the graphs induced by $i$, $i'$, $(i+1)$ and $\ell$ for $i=2$, then $i=3$, etc, up to $i=\ell-2$, except for now the orientations of the edges $i\ell$ and $(i+1)\ell$ will be uniquely defined based on the orientation of the edge $1\ell$. Thus, we see that $\ell$ must either be a sink, or a source. Considering the graph induced by the vertices $1$, $(\ell-1)$, $\ell$ and $\ell'$ we see that in the former case, $1\rightarrow \ell$ is a shortcutting edge, while in the later case $\ell\rightarrow (\ell-1)$ is a shortcutting edge; contradiction. Thus, $A_{\ell}$ does not admit a semi-transitive orientation and thus is not word-representable. 
\end{proof}

\section{Our characterization results}

\subsection{Restricting degrees in $E_{n-m}$ to be at most 2}

\begin{definition} For a split graph $(E_{n-m},K_m)$, any triangle induced by two vertices in $K_m$ and one vertex in $E_{n-m}$ is called a {\em non-clique triangle}.\end{definition}

\begin{theorem}\label{main-1} Let $m\geq 1$ and $S_n=(E_{n-m},K_m)$ be a split graph. Also, let the degree of any vertex in $E_{n-m}$ be at most $2$. Then $S_n$ is word-representable if and only if $S_n$ does not contain the graphs $T_2$ in Figure~\ref{nonRepTri} and $A_{\ell}$ in Definition~\ref{def-Aell} as induced subgraphs. \end{theorem}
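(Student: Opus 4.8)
The plan is to prove the contrapositive-friendly direction and the direct direction separately. The necessity of avoiding $T_2$ and $A_\ell$ is immediate from hereditarity: $T_2$ is non-word-representable (by Theorem~\ref{neighbourhood}, as remarked after Figure~\ref{nonRepTri}) and each $A_\ell$ is non-word-representable by Theorem~\ref{Aell-min-non-repres}, so any split graph containing one of them as an induced subgraph fails to be word-representable. The content is in the converse: assuming $S_n=(E_{n-m},K_m)$ has all independent-set vertices of degree at most $2$ and contains neither $T_2$ nor any $A_\ell$, we must exhibit a semi-transitive orientation (and invoke Theorem~\ref{key-thm}). First I would dispose of the easy reductions: by the Remark we may assume $m\geq 4$; by Lemma~\ref{lemma-assumptions} we may assume every vertex of $E_{n-m}$ has degree exactly $2$ and no two vertices share a neighbourhood; and by the discussion after Theorem~\ref{thm-comp-split} we may assume one of $B_1,B_2,B_3$ occurs as an induced subgraph (otherwise $S_n$ is a comparability graph, hence word-representable). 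After these reductions every vertex $v\in E_{n-m}$ corresponds to an edge $e(v)=\{a,b\}$ of the clique $K_m$, and the whole structure is encoded by which clique-edges are "covered" by (one or more) independent vertices — a graph $H$ on vertex set $\{1,\dots,m\}$ recording the non-clique triangles.

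The heart of the argument is to understand, for a fixed transitive (i.e.\ linear, by Lemma~\ref{lem-tran-orie}) orientation $1\to 2\to\cdots\to m$ of $K_m$, when the non-clique triangles can be oriented without creating a shortcut. As in the proofs of Theorems~\ref{thm-K-Tri-m-w-r} and~\ref{Aell-min-non-repres}, a non-clique vertex $v$ with $e(v)=\{i,j\}$, $i<j$, can always be made a sink ($i\to v$, $j\to v$) or a source ($v\to i$, $v\to j$) safely precisely when $j=i+1$ (consecutive vertices in the linear order); if $j>i+1$ then orienting $v$ as a sink makes $i\to v$ a shortcutting edge (the path $i\to i+1\to j$ has length $\geq 2$ but $i+1\to v$ is absent) and as a source makes $v\to j$ a shortcutting edge symmetrically, while the "transitive-looking" orientations $i\to v\to j$ and $j\to v\to i$ are a cycle or a shortcut. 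So a semi-transitive orientation exists with this particular linear order on $K_m$ \emph{iff} every clique-edge carrying a non-clique vertex is a consecutive pair $\{i,i+1\}$; i.e.\ iff the graph $H$ of covered edges is a subgraph of the Hamiltonian path $1$–$2$–$\cdots$–$m$. Hence $S_n$ is word-representable iff there is \emph{some} linear ordering of $\{1,\dots,m\}$ in which every edge of $H$ becomes consecutive — equivalently, $H$ admits a Hamiltonian-path "consecutive arrangement", which happens iff $H$ is a disjoint union of simple paths (each component a path, no vertex of degree $\geq 3$, no cycle). The remaining step is to translate the two obstructions "$H$ has a vertex of degree $\geq 3$" and "$H$ contains a cycle" back into induced subgraphs of $S_n$: a vertex of $K_m$ lying on three covered edges yields, together with the three independent vertices and enough clique vertices, a copy of $A_\ell$ (the $T_1=A_4$ pattern — a clique vertex adjacent to three "pendant triangles"), and similarly a cycle $i_1 i_2\cdots i_k i_1$ in $H$ produces the $K^\triangle$-with-apex configuration, i.e.\ again an $A_\ell$ (or, in a small degenerate case, $T_2$). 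I would organize this as: (a) show word-representability $\Rightarrow$ $H$ is a union of paths, by the shortcut analysis above applied to the linear order forced by Lemma~\ref{lem-tran-orie}; (b) show $H$ a union of paths $\Rightarrow$ word-representable, by choosing the linear order so all edges of $H$ are consecutive and then copying the sink/source construction from the proof of Theorem~\ref{thm-K-Tri-m-w-r}; (c) show $H$ \emph{not} a union of paths $\Rightarrow$ $S_n$ contains $T_2$ or some $A_\ell$ as an induced subgraph, which is the bookkeeping step identifying the forbidden pattern from a degree-$3$ vertex or a cycle in $H$.

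The main obstacle I anticipate is step (c): one must be careful that the subgraph extracted is genuinely \emph{induced}, and that when several independent vertices cover edges sharing endpoints the resulting configuration is exactly $A_\ell$ (or $T_2$) and not something larger or something that accidentally contains extra edges. In particular, when a cycle in $H$ is short or when a degree-$3$ vertex's three covered edges interact with a cycle, one has to check the small cases (e.g.\ $\ell=4,5$) by hand — this is presumably where $T_2$ enters, as the one sporadic obstruction not of the form $A_\ell$. A secondary subtlety is that step (a) must rule out semi-transitive orientations for \emph{every} linear order on $K_m$ simultaneously, not just the "identity" one; but since all linear orders are symmetric (relabel the clique), it suffices to fix one and observe that $H$ being a union of paths is exactly the relabeling-invariant statement "some linear order makes all $H$-edges consecutive." The rest — the reductions and the explicit orientation — are routine given the machinery already set up in the Preliminaries and the proof of Theorem~\ref{thm-K-Tri-m-w-r}.
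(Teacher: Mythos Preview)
Your overall strategy --- encode the non-clique triangles by the auxiliary graph $H$ on the clique vertices, argue that $H$ must be ``path-like'', and then realise $S_n$ inside some $K^{\triangle}_\ell$ --- is exactly the paper's approach, and steps (b) and (c) together with hereditarity are all that is needed (your step (a) is redundant). However, two concrete points in your execution are wrong.

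First, in step (c) you have the two obstructions swapped. A clique vertex incident to three covered edges (three non-clique triangles meeting at one vertex of $K_m$) is precisely the configuration $T_2$, \emph{not} $T_1=A_4$: in $T_2$ the clique is on $\{1,2,3,4\}$ and all three independent vertices are adjacent to vertex~$2$, whereas in $T_1=A_4$ the covered edges form a $3$-cycle on $\{1,2,3\}$ with the fourth clique vertex~$4$ lying on no non-clique triangle. So the correct bookkeeping is: a vertex of degree $\ge 3$ in $H$ forces an induced $T_2$, while a cycle of length $k$ in $H$ together with any one extra clique vertex off that cycle forces an induced $A_{k+1}$.

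Second, your dichotomy ``$H$ is a union of paths, or $S_n$ contains a forbidden subgraph'' misses the boundary case where $H$ is a single Hamiltonian cycle on all $m$ clique vertices, i.e.\ $S_n=K^{\triangle}_m$. Here $H$ is not a union of paths, yet no $A_\ell$ is induced (there is no spare clique vertex to serve as the apex~$\ell$) and no $T_2$ is present, so (c) is false as stated. Relatedly, your shortcut analysis behind (a) overlooks the orientation $1\to v\to m$ for an independent vertex attached to the source and sink of the linear order on $K_m$: this is exactly how $m'$ is oriented in the proof of Theorem~\ref{thm-K-Tri-m-w-r}, and it creates no shortcut, so (a) is also false. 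The clean fix --- which is what the paper does --- is to argue that absence of $T_2$ and all $A_\ell$ forces $H$ to have maximum degree~$2$ and no non-spanning cycle, whence the clique vertices can be placed around a circle so that every covered edge is between consecutive vertices; then $S_n$ is an induced subgraph of $K^{\triangle}_m$, and word-representability follows from Theorem~\ref{thm-K-Tri-m-w-r} by hereditarity.
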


\begin{proof} By Lemma~\ref{lemma-assumptions}, we can assume that each vertex in $E_{n-m}$ is of degree 2, and no vertex in $E_{n-m}$ has the same neighbourhood. Because $T_2$ in Figure~\ref{nonRepTri} is non-word-representable, we see that no three non-clique triangles can be incident to the same vertex.   Moreover, by Theorem~\ref{Aell-min-non-repres}, we see that $K_m$ cannot have a cycle (without repeated vertices) such that each edge in the cycle is an edge in a non-clique triangle. These observations imply that $K_m$ contains disjoint paths, such that each edge in a path is an edge in a non-clique triangle, as shown schematically in Figure~\ref{schem-Km-tri}. But then we can redraw the graph, if necessary, to see that $S_n$ is exactly the graph $K^{\triangle}_\ell$ with possibly some of non-clique triangles missing, and this graph is word-representable by Theorem~\ref{thm-K-Tri-m-w-r} taking into account the hereditary nature of word-representability. 
\end{proof}

\begin{figure}[h]
\begin{center}
\begin{tabular}{cc}
\begin{tikzpicture}[node distance=0.5cm,auto,main node/.style={fill,circle,draw,inner sep=0pt,minimum size=3pt}]

\node[main node] (1) {};
\node[main node] (2) [above right of=1] {};
\node[main node] (3) [below right of=2] {};
\node[main node] (4) [above right of=3] {};
\node[main node] (5) [below right of=4] {};
\node[main node] (6) [above right of=5] {};
\node[main node] (7) [below right of=6] {};

\node (16) [above right of=2] {};
\node[main node] (17) [above left of=16] {};
\node[main node] (18) [above right of=17] {};
\node[main node] (19) [below right of=18] {};
\node[main node] (20) [above right of=19] {};
\node[main node] (21) [below right of=20] {};
\node[main node] (22) [above right of=21] {};
\node[main node] (23) [below right of=22] {};

\path
(17) edge (23)
(17) edge (18);
\path
(19) edge (18)
(19) edge (20);
\path
(21) edge (20)
(21) edge (22);

\path
(23) edge (22);

\node[main node] (8) [below of=3] {};
\node[main node] (9) [below left of=8] {};
\node[main node] (10) [below right of=8] {};
\node[main node] (11) [above right of=10] {};
\node[main node] (12) [below right of=11] {};

\node[main node] (13) [right of=12] {};
\node[main node] (14) [above right of=13] {};
\node[main node] (15) [below right of=14] {};

\node[main node] (24) [below right of=10] {};
\node[main node] (25) [below right of=24] {};
\node[main node] (26) [below left of=24] {};

\path
(25) edge (26)
(24) edge (25)
(24) edge (26);

\path
(9) edge (12)
(9) edge (8);
\path
(10) edge (8)
(10) edge (11);
\path
(11) edge (12);
\path
(1) edge (2)
(1) edge (7);
\path
(3) edge (2)
(3) edge (4);
\path
(5) edge (4)
(5) edge (6);
\path
(7) edge (6);
\path
(13) edge (15);
\path
(14) edge (13)
(14) edge (15);

\draw (1.5,0) circle (2cm); 

\end{tikzpicture}

\end{tabular}
\end{center}
\vspace{-5mm}
\caption{Schematic structure of the graph $S_n$ in Theorem~\ref{main-1}}\label{schem-Km-tri}
\end{figure}
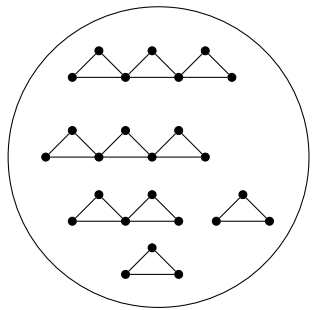

\subsection{Cliques of size 4}

We restrict our attention to the case of cliques of size 4 ($m=4$). If the degrees of vertices in $E_{n-4}$ are at most 2, we can apply Theorem~\ref{main-1} to see that word-representability is characterized by avoidance of the graphs $T_1$ and $T_2$ in Figure~\ref{nonRepTri} as induced subgraphs. However, $E_{n-4}$ may also have vertices of degree 3. Theorem~\ref{main-2} below gives a complete characterization for word-representability of $(E_{n-4},K_4)$.

Our methodology to prove Theorem~\ref{main-2} is in using Lemma~\ref{lemma-assumptions} to come up with the largest possible split graph $S_n$ in the context. We then identify a minimal non-word-representable induced subgraph in such $S_n$ and consider a smaller graph $S_{n-1}$ obtained from $S_n$ by removing one vertex. We need to consider all possibilities of removing a vertex in $S_n$, but  we use symmetries, whenever possible, to reduce the number of cases to consider. If $S_{n-1}$ is word-representable, there is nothing to do. Otherwise, we repeat the process for $S_{n-1}$ instead of $S_n$. This way, we located all minimal non-word-representable induced subgraphs.  We note that in the proof, orientations claimed by us to be semi-transitive, can be checked to be such either by hand, or using the software~\cite{G}.

\begin{lemma} The split graph $T_4$ in Figure~\ref{nonRepTri-2} is minimal non-word-representable graph. \end{lemma}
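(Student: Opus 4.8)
The plan is to follow the methodology announced just before the lemma: establish minimality by checking that every single-vertex deletion from $T_4$ yields a word-representable graph, and establish non-word-representability by showing $T_4$ admits no semi-transitive orientation, invoking Theorem~\ref{key-thm}. Since $T_4$ is a split graph $(E_{n-4},K_4)$ with some independent-set vertices of degree $3$ (that being the feature that distinguishes this subsection from Theorem~\ref{main-1}), I expect $T_4$ to be built on a $K_4$ with several non-clique triangles and at least one degree-$3$ attachment. Before writing the argument I would first read off from Figure~\ref{nonRepTri-2} exactly which triangles and which degree-$3$ vertex are present, and fix a labeling $1,2,3,4$ of the clique.

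For \textbf{minimality}, the clean route is: by Lemma~\ref{lem-tran-orie} any acyclic orientation of $K_4$ is transitive with a unique source and sink, so once we delete one vertex the remaining split graph either (i) loses the degree-$3$ vertex, in which case all independent-set vertices have degree at most $2$ and we are in the regime of Theorem~\ref{main-1} — here I would check directly that the resulting graph contains neither $T_2$ nor any $A_\ell$ (it cannot contain $A_\ell$ for $\ell\ge 5$ on so few vertices, and one checks it avoids $T_1=A_4$ and $T_2$), hence it is word-representable; or (ii) loses a clique vertex, reducing $K_4$ to $K_3$, so the graph becomes $3$-colorable and word-representable by Theorem~\ref{3-col-thm}; or (iii) loses another degree-$2$ independent-set vertex, in which case I would exhibit an explicit semi-transitive orientation (or a word), typically the orientation of Theorem~\ref{thm-K-Tri-m-w-r}-type with the clique oriented $1\to2\to3\to4$ and each triangle apex made a sink, together with an appropriate orientation of the degree-$3$ vertex making it a sink or source. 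Symmetries of $T_4$ should collapse these into just two or three genuinely distinct cases.

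For \textbf{non-word-representability}, I would argue by contradiction. Assume a semi-transitive orientation exists; by Lemma~\ref{lem-tran-orie} the $K_4$ is transitively oriented, so up to reversal and relabeling we may take $1\to2\to3\to4$ with $1$ the source and $4$ the sink. Then for each non-clique triangle on an edge $\{i,i+1\}$ with apex $v$, the shortcut-free condition forces $v$ to be a sink or a source of that triangle (orienting it as $i\to v\to i+1$ creates a shortcut via the clique edge to a further vertex, exactly as in the proof of Theorem~\ref{Aell-min-non-repres} and Remark~\ref{orient-KmTriang}); similarly the apex $w$ on edge $\{1,4\}$ must satisfy $1\to w$ and $w\to4$ to avoid a cycle, and then avoiding a shortcut pins down the remaining edge at $w$. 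Finally the degree-$3$ vertex $z$, adjacent to three of the clique vertices, forces a contradiction: having one incoming and one outgoing clique edge at $z$ would create a shortcut with the directed clique path, so $z$ is a sink or a source, and then the triangle/edge it shares forces either a shortcutting edge or a directed cycle — this is the analogue of the endgame in Theorem~\ref{Aell-min-non-repres} where vertex $\ell$ is forced to be a sink or source and then fails.

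The \textbf{main obstacle} is the non-representability direction, specifically the bookkeeping of which vertex configurations are forced and checking that in \emph{every} surviving branch a shortcut or cycle appears — this requires care because with a degree-$3$ vertex present there are more local orientation choices than in the $K^{\triangle}_\ell$ situation, and one must rule them all out without missing a case. Here I would lean on symmetry of $T_4$ to cut the branching, and, as the authors suggest, verify any explicitly claimed semi-transitive orientations (in the minimality part) and the absence thereof either by hand or with the software of~\cite{G}.
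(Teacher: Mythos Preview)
Your plan is sound in spirit but differs markedly from what the paper actually does. The paper's proof is essentially two external references: non-word-representability is delegated to the software~\cite{G} (the authors remark that the branching argument ``would take up to two pages while bringing no insights''), and minimality is obtained by observing that $T_4$ has eight vertices, so any one-vertex deletion yields a seven-vertex graph, and one simply checks it is not among the 25 known non-word-representable graphs on seven vertices listed in~\cite{KL15}. Your proposal, by contrast, is a fully self-contained case analysis: an explicit branching argument on semi-transitive orientations for non-representability, and explicit orientations or appeals to Theorems~\ref{3-col-thm} and~\ref{main-1} for minimality. Both are valid; yours is more informative but considerably longer, which is precisely why the paper opts out.

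Two cautions about your outline once you do read off the structure of $T_4$. First, $T_4$ has \emph{two} degree-$3$ vertices in $E_{n-4}$ (with neighbourhoods $\{1,2,3\}$ and $\{1,3,4\}$), not one; so your case~(i) for minimality---``remove the degree-$3$ vertex and fall back on Theorem~\ref{main-1}''---does not apply as stated, since removing one still leaves the other. You will need explicit orientations there too. Second, for non-representability, the automorphism group of $T_4$ is small (the clique vertex~$1$ is distinguished by degree, and among $\{2,3,4\}$ only one $2\leftrightarrow 4$ swap survives), so ``up to reversal and relabelling we may take $1\to 2\to 3\to 4$'' does not reduce to a single case: several placements of vertex~$1$ along $\vec{P}$ must be treated separately, and the two degree-$3$ vertices interact. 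The branching is doable but genuinely has the length the authors warn about.
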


\begin{proof} Non-word-representability of $T_4$ can be proved rigorously by the branching method explained in Section 4.5 in \cite{KL15} (also, see e.g. \cite{CKL17} where the method is applied). However, recording such a proof would take up to two pages while brining no insights, so to save space, we simply refer to the software \cite{G} justifying non-word-representability of $T_4$.  

The minimality of $T_4$ follows from the fact that removing a vertex in $T_4$ we do not obtain one of the graphs in Figure 3.9 on page 48 in \cite{KL15} showing all 25 non-word-representable graphs on 7 vertices. Alternatively, one can use the software \cite{G}, or follow the cases in the proof of Theorem~\ref{main-2} (for example, removing a vertex of degree 2 in $T_4$ is equivalent to removing vertex 8 in the semi-transitively oriented graph $M_6$ in Figure~\ref{maximal-3-2-config-2}, which ought to result in a word-representable graph). \end{proof}

\begin{theorem}\label{main-2} Let $S_n=(E_{n-4},K_4)$ be a split graph. Then  $S_n$ is word-representable if and only if $S_n$ does not contain the graphs $T_1$, $T_2$ and $T_3$ in Figure~\ref{nonRepTri}, and $T_4$ in Figure~\ref{nonRepTri-2} as induced subgraphs.\end{theorem}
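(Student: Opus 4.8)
The plan is to characterize word-representability of $S_n = (E_{n-4}, K_4)$ by reducing, via Lemma~\ref{lemma-assumptions}, to the case where every vertex of $E_{n-4}$ has degree $2$ or $3$, and no two vertices of $E_{n-4}$ share a neighbourhood. Since the clique has size $4$, there are at most $\binom{4}{2} = 6$ possible degree-$2$ vertices (one per edge of $K_4$) and at most $\binom{4}{3} = 4$ possible degree-$3$ vertices (one per triangle of $K_4$), so the ``largest'' relevant split graph $S^{\max}$ has $4 + 6 + 4 = 14$ vertices. The strategy is a finite case analysis on the induced subgraphs of $S^{\max}$: first show that each of $T_1$, $T_2$, $T_3$ and $T_4$ really is an induced subgraph of $S^{\max}$ (so the ``only if'' direction follows immediately from their non-word-representability, established earlier and, for $T_4$, in the preceding lemma, together with the hereditary nature of word-representability); then show that any $S_n$ avoiding all four as induced subgraphs is word-representable by exhibiting a semi-transitive orientation and invoking Theorem~\ref{key-thm}.

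For the ``if'' direction I would organize the search by the number $t \in \{0,1,2,3,4\}$ of degree-$3$ vertices present (equivalently, the number of triangles of $K_4$ that are ``filled in'' by an apex-like vertex of $E_{n-4}$). For each value of $t$, up to the symmetries of $K_4$ (the group $S_4$ acting on the clique), one enumerates the possibilities for which degree-$2$ vertices are also present, discards any configuration containing $T_1$, $T_2$, $T_3$ or $T_4$, and for each surviving configuration produces an explicit acyclic orientation and checks it is shortcut-free. As the excerpt's methodology paragraph indicates, one starts from a maximal surviving configuration $S_n$, locates a minimal non-word-representable induced subgraph, removes a vertex (using symmetry to cut down cases), and recurses; when $S_{n-1}$ is word-representable one stops. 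The semi-transitive orientations that arise will, as in the proof of Theorem~\ref{thm-K-Tri-m-w-r}, have $K_4$ oriented transitively as $1 \to 2 \to 3 \to 4$, with each degree-$2$ vertex made a sink or a source relative to its two clique neighbours, and each degree-$3$ vertex oriented so as to avoid creating a shortcut through the directed path on $K_4$; the key figures $M_1, M_2, \dots$ (e.g.\ the graph $M_6$ referenced in the preceding lemma) record these orientations. Verification of semi-transitivity for each is routine and can be delegated to the software~\cite{G} or done by hand.

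I expect the main obstacle to be the bookkeeping: correctly enumerating, up to the $S_4$-symmetry on $K_4$, all configurations of degree-$2$ and degree-$3$ vertices, and for each one either pinpointing an induced copy of $T_1$, $T_2$, $T_3$ or $T_4$, or certifying a semi-transitive orientation. The subtle cases are those with $t = 2$ or $t = 3$ degree-$3$ vertices together with several degree-$2$ vertices, since $T_3$ and $T_4$ both live in that regime and it is easy to miss an induced copy or to propose an orientation that secretly contains a shortcut running through one of the apex vertices and the transitive path on $K_4$. The argument must also confirm that the four forbidden graphs are \emph{mutually irredundant}, i.e.\ that none is an induced subgraph of another, so that all four are genuinely needed in the characterization. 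Once the finitely many surviving configurations are each handled, the general $S_n$ follows: it is an induced subgraph of one of them, and word-representability is hereditary.
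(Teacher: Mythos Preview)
Your proposal is correct and follows essentially the same route as the paper: reduce via Lemma~\ref{lemma-assumptions} to configurations with degree-$2$ and degree-$3$ independent vertices having pairwise distinct neighbourhoods, then perform a finite case analysis using the symmetries of $K_4$, exhibiting semi-transitive orientations (or locating one of $T_1,\ldots,T_4$) in each case. The only difference is organizational: you propose to stratify by $t\in\{0,1,2,3,4\}$, whereas the paper dispatches $t=0$ via Theorem~\ref{main-1}, observes that $t\ge 3$ forces an induced $T_3$, and then works downward from the two maximal $t=2$ configurations (your $M_i$ graphs arise exactly here), so that $t=1$ cases appear automatically as subgraphs.
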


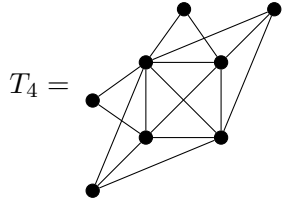
\begin{figure}
\begin{center}

\begin{tikzpicture}[node distance=1cm,auto,main node/.style={fill,circle,draw,inner sep=0pt,minimum size=5pt}]

\node[main node] (1) {};
\node[main node] (2) [right of=1] {};
\node[main node] (3) [below of=2] {};
\node[main node] (4) [left of=3] {};
\node[main node] (5) [above right of=1,xshift=-0.2cm] {};
\node[main node] (6) [above right of=2] {};
\node[main node] (7) [below left of=1,yshift=0.2cm] {};
\node[main node] (8) [below left of=4] {};

\node [above left of=7,yshift=-0.5cm] {$T_4=$};

\path
(1) edge (2)
(1) edge (3)
(1) edge (4)
(1) edge (5)
(1) edge (6)
(1) edge (7)
(1) edge (8);

\path
(2) edge (3)
(2) edge (4)
(2) edge (5)
(2) edge (6);

\path
(3) edge (4)
(3) edge (6)
(3) edge (8);

\path
(4) edge (8)
(4) edge (7);

\end{tikzpicture}

\caption{\label{nonRepTri-2} A minimal non-word-representable split graph $T_4$}
\end{center}
\end{figure}

\begin{proof} We can assume that $E_{n-4}$ contains at least one vertex of degree 3, or else we are done by Theorem~\ref{main-1} with $T_1=A_3$ and $T_2$ being forbidden induced subgraphs. Further, recall that by Lemma~\ref{lemma-assumptions}, we can assume that each vertex in $E_{n-m}$ is of degree 2 or 3, and no vertices in $E_{n-m}$ have the same neighbourhood.  

\begin{figure}
\begin{center}

\begin{tabular}{ccc}

\begin{tikzpicture}[node distance=1cm,auto,main node/.style={fill,circle,draw,inner sep=0pt,minimum size=5pt}]

\node[main node] (1) {};
\node[main node] (2) [right of=1] {};
\node[main node] (3) [below of=2] {};
\node[main node] (4) [left of=3] {};
\node[main node] (5) [above right of=2] {};
\node[main node] (6) [below left of=4] {};

\node [left of=1,xshift=0.8cm,yshift=0.2cm] {{\small 1}};
\node [right of=2,xshift=-0.7cm,yshift=0.1cm] {{\small 2}};
\node [right of=3,xshift=-0.7cm] {{\small 3}};
\node [left of=4,xshift=0.9cm,yshift=0.3cm] {{\small 4}};
\node [right of=5,xshift=-0.7cm] {{\small 5}};
\node [left of=6,xshift=0.7cm,yshift=-0.4] {{\small 6}};

\path
(1) edge (2)
(1) edge (3)
(1) edge (4)
(1) edge (5)
(1) edge (6);

\path
(2) edge (3)
(2) edge (4)
(2) edge (5);

\path
(3) edge (6)
(3) edge (4)
(3) edge (5);

\path
(4) edge (6);

\end{tikzpicture}

&

&

\begin{tikzpicture}[node distance=1cm,auto,main node/.style={fill,circle,draw,inner sep=0pt,minimum size=5pt}]

\node[main node] (1) {};
\node[main node] (2) [right of=1] {};
\node[main node] (3) [below of=2] {};
\node[main node] (4) [left of=3] {};
\node[main node] (5) [above right of=2] {};
\node[main node] (6) [below right of=3] {};

\node [left of=1,xshift=0.8cm,yshift=0.2cm] {{\small 2}};
\node [right of=2,xshift=-0.7cm,yshift=0.1cm] {{\small 1}};
\node [right of=3,xshift=-0.7cm] {{\small 3}};
\node [left of=4,xshift=0.8cm,yshift=0.3cm] {{\small 4}};
\node [right of=5,xshift=-0.7cm] {{\small 5}};
\node [right of=6,xshift=-0.7cm,yshift=-0.4] {{\small 6}};

\node [left=1,yshift=-0.5cm,xshift=-0.5cm] {=};

\path
(1) edge (2)
(1) edge (3)
(1) edge (4)
(1) edge (5);

\path
(2) edge (3)
(2) edge (4)
(2) edge (5)
(2) edge (6);

\path
(3) edge (6)
(3) edge (4)
(3) edge (5);

\path
(4) edge (6);

\end{tikzpicture}

\end{tabular}

\caption{\label{maximal-3-configurations} The single maximal possibility, up to isomorphism, for $(E_{n-4},K_4)$ with vertices of degree 3 in $E_{n-4}$}
\end{center}
\end{figure}
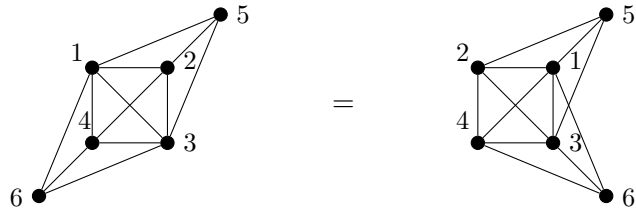

Assuming that $E_{n-4}$ only contains vertices of degree 3, we can see that $T_1$ and $T_3$ are the only minimal non-word-representable induced subgraphs to be avoided by $S_n$ to be word-representable. Indeed, since no vertices in $E_{n-m}$ have the same neighbourhood, $E_{n-m}$ can have at most 4 vertices in this case. If all 4 vertices are present, $S_n$ contains the minimal non-word-representable $T_3$ as an induced subgraph (to see this, $T_3$ is redrawn in a different way in Figure~\ref{nonRepTri}). Removing one of the 4 vertices in $E_{n-m}$ (any one due to the symmetries) we obtain exactly $T_3$ which is a minimal non-word-representable graph. It remains to notice that if $E_{n-4}$ contains 4 vertices and we will remove a vertex in $K_4$ we will obtain the minimal non-word-representable graph $T_1$. Thus, $E_{n-4}$ can have at most two vertices in this case, resulting, up to isomorphism, in a single case to consider  that is presented in Figure~\ref{maximal-3-configurations} (along with a justification that two of seemly different graphs are actually isomorphic).

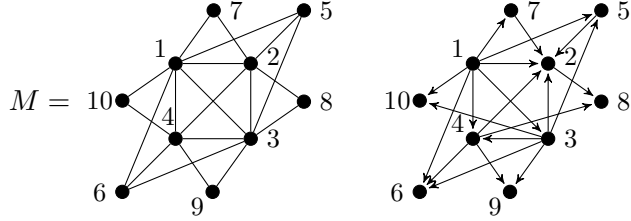
\begin{figure}
\begin{center}

\begin{tabular}{cc}

\begin{tikzpicture}[node distance=1cm,auto,main node/.style={fill,circle,draw,inner sep=0pt,minimum size=5pt}]

\node[main node] (1) {};
\node[main node] (2) [right of=1] {};
\node[main node] (3) [below of=2] {};
\node[main node] (4) [left of=3] {};
\node[main node] (5) [above right of=2] {};
\node[main node] (6) [below left of=4] {};
\node[main node] (7) [above right of=1, xshift=-0.2cm] {};
\node[main node] (8) [below right of=2,yshift=0.2cm] {};
\node[main node] (9) [below left of=3, xshift=0.2cm] {};
\node[main node] (10) [above left of=4,yshift=-0.2cm] {};

\node [left of=1,xshift=0.8cm,yshift=0.2cm] {{\small 1}};
\node [right of=2,xshift=-0.7cm,yshift=0.1cm] {{\small 2}};
\node [right of=3,xshift=-0.7cm] {{\small 3}};
\node [left of=4,xshift=0.9cm,yshift=0.3cm] {{\small 4}};
\node [right of=5,xshift=-0.7cm] {{\small 5}};
\node [left of=6,xshift=0.7cm,yshift=-0.4] {{\small 6}};
\node [right of=7,xshift=-0.7cm] {{\small 7}};
\node [right of=8,xshift=-0.7cm] {{\small 8}};
\node [left of=9,xshift=0.8cm,yshift=-0.2cm] {{\small 9}};
\node (11) [left of=10,xshift=0.7cm] {{\small 10}};
\node [left of=11,xshift=0.2cm] {$M=$};

\path
(10) edge (1)
(10) edge (4)
(9) edge (3)
(9) edge (4)
(8) edge (2)
(8) edge (3)
(7) edge (1)
(7) edge (2);

\path
(1) edge (2)
(1) edge (3)
(1) edge (4)
(1) edge (5)
(1) edge (6);

\path
(2) edge (3)
(2) edge (4)
(2) edge (5);

\path
(3) edge (6)
(3) edge (4)
(3) edge (5);

\path
(4) edge (6);

\end{tikzpicture}

&

\begin{tikzpicture}[->,>=stealth', shorten >=1pt, node distance=1cm,auto,main node/.style={fill,circle,draw,inner sep=0pt,minimum size=5pt}]

\node[main node] (1) {};
\node[main node] (2) [right of=1] {};
\node[main node] (3) [below of=2] {};
\node[main node] (4) [left of=3] {};
\node[main node] (5) [above right of=2] {};
\node[main node] (6) [below left of=4] {};
\node[main node] (7) [above right of=1, xshift=-0.2cm] {};
\node[main node] (8) [below right of=2,yshift=0.2cm] {};
\node[main node] (9) [below left of=3, xshift=0.2cm] {};
\node[main node] (10) [above left of=4,yshift=-0.2cm] {};

\node [left of=1,xshift=0.8cm,yshift=0.2cm] {{\small 1}};
\node [right of=2,xshift=-0.7cm,yshift=0.1cm] {{\small 2}};
\node [right of=3,xshift=-0.7cm] {{\small 3}};
\node [left of=4,xshift=0.8cm,yshift=0.2cm] {{\small 4}};
\node [right of=5,xshift=-0.7cm] {{\small 5}};
\node [left of=6,xshift=0.7cm,yshift=-0.4] {{\small 6}};
\node [right of=7,xshift=-0.7cm] {{\small 7}};
\node [right of=8,xshift=-0.7cm] {{\small 8}};
\node [left of=9,xshift=0.8cm,yshift=-0.2cm] {{\small 9}};
\node [left of=10,xshift=0.7cm] {{\small 10}};

\path
(1) edge (2)
(1) edge (3)
(1) edge (4)
(1) edge (5)
(1) edge (6)
(1) edge (7)
(1) edge (10);

\path
(2) edge (8);

\path
(3) edge (2)
(3) edge (4)
(3) edge (5)
(3) edge (6)
(3) edge (9)
(3) edge (10);

\path
(4) edge (2)
(4) edge (6)
(4) edge (8)
(4) edge (9);

\path
(5) edge (2);

\path
(7) edge (2);

\end{tikzpicture}

\end{tabular}

\caption{\label{maximal-3-2-config} Maximal non-isomorphic possibilities for $(E_{n-4},K_4)$ with vertices of degree 2 and 3 in $E_{n-4}$}
\end{center}
\end{figure}

We next consider adding vertices of degree 2 to $E_{n-4}$ in the graph in Figure~\ref{maximal-3-configurations}.  As mentioned above, non-word-representability of  $T_2$ in Figure~\ref{nonRepTri} implies that no three non-clique triangles (with disjoint vertices) can be incident to the same vertex. Thus, at most four vertices (with distinct neighbourhoods) of degree 2 can be present in $E_{n-4}$, and there are just two non-isomorphic ways to add these vertices to the graph in Figure~\ref{maximal-3-configurations} that are given in Figure~\ref{maximal-3-2-config}. 

The graph to the right in Figure~\ref{maximal-3-2-config} is word-representable and we provide one of its semi-transitive orientations to justify this (we omit a justification that the orientation is semi-transitive). On the other hand, the graph $M$ in Figure~\ref{maximal-3-2-config} is non-word-representable because it contains the graph $T_4$ in Figure~\ref{nonRepTri-2} as an induced subgraph (just remove the vertices 7 and 10 to see this). 

To find all minimal non-word-representable induced subgraphs in $M$, we will consider removing one vertex from it. Note that there are only four cases to consider up to isomorphism.

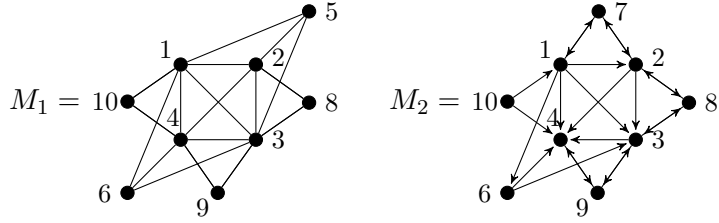
\begin{figure}
\begin{center}

\begin{tabular}{cc}

\begin{tikzpicture}[node distance=1cm,auto,main node/.style={fill,circle,draw,inner sep=0pt,minimum size=5pt}]

\node[main node] (1) {};
\node[main node] (2) [right of=1] {};
\node[main node] (3) [below of=2] {};
\node[main node] (4) [left of=3] {};
\node[main node] (5) [above right of=2] {};
\node[main node] (6) [below left of=4] {};
\node[main node] (8) [below right of=2,yshift=0.2cm] {};
\node[main node] (9) [below left of=3, xshift=0.2cm] {};
\node[main node] (10) [above left of=4,yshift=-0.2cm] {};

\node [left of=1,xshift=0.8cm,yshift=0.2cm] {{\small 1}};
\node [right of=2,xshift=-0.7cm,yshift=0.1cm] {{\small 2}};
\node [right of=3,xshift=-0.7cm] {{\small 3}};
\node [left of=4,xshift=0.9cm,yshift=0.3cm] {{\small 4}};
\node [right of=5,xshift=-0.7cm] {{\small 5}};
\node [left of=6,xshift=0.7cm,yshift=-0.4] {{\small 6}};
\node [right of=8,xshift=-0.7cm] {{\small 8}};
\node [left of=9,xshift=0.8cm,yshift=-0.2cm] {{\small 9}};
\node (11) [left of=10,xshift=0.7cm] {{\small 10}};

\node [left of=11,xshift=0.2cm] {$M_1=$};

\path
(10) edge (1)
(10) edge (4)
(9) edge (3)
(9) edge (4)
(6) edge (1)
(6) edge (3)
(8) edge (2)
(8) edge (3);

\path
(1) edge (2)
(1) edge (3)
(1) edge (4)
(1) edge (5)
(1) edge (10);

\path
(2) edge (3)
(2) edge (4)
(2) edge (5)
(2) edge (8);

\path
(3) edge (9)
(3) edge (8)
(3) edge (4)
(3) edge (5);

\path
(4) edge (6)
(4) edge (9);

\path
(10) edge (4);

\end{tikzpicture}

&

\begin{tikzpicture}[->,>=stealth', shorten >=1pt, node distance=1cm,auto,main node/.style={fill,circle,draw,inner sep=0pt,minimum size=5pt}]

\node[main node] (1) {};
\node[main node] (2) [right of=1] {};
\node[main node] (3) [below of=2] {};
\node[main node] (4) [left of=3] {};
\node[main node] (6) [below left of=4] {};
\node[main node] (7) [above right of=1, xshift=-0.2cm] {};
\node[main node] (8) [below right of=2,yshift=0.2cm] {};
\node[main node] (9) [below left of=3, xshift=0.2cm] {};
\node[main node] (10) [above left of=4,yshift=-0.2cm] {};

\node [left of=1,xshift=0.8cm,yshift=0.2cm] {{\small 1}};
\node [right of=2,xshift=-0.7cm,yshift=0.1cm] {{\small 2}};
\node [right of=3,xshift=-0.7cm] {{\small 3}};
\node [left of=4,xshift=0.9cm,yshift=0.3cm] {{\small 4}};
\node [left of=6,xshift=0.7cm,yshift=-0.4] {{\small 6}};
\node [right of=7,xshift=-0.7cm] {{\small 7}};
\node [right of=8,xshift=-0.7cm] {{\small 8}};
\node [left of=9,xshift=0.8cm,yshift=-0.2cm] {{\small 9}};
\node (11) [left of=10,xshift=0.7cm] {{\small 10}};
\node [left of=11,xshift=0.2cm] {$M_2=$};

\path
(10) edge (1)
(10) edge (4)
(9) edge (3)
(9) edge (4)
(8) edge (2)
(8) edge (3)
(7) edge (1)
(7) edge (2);

\path
(1) edge (2)
(1) edge (3)
(1) edge (4)
(1) edge (6)
(1) edge (7);

\path
(2) edge (3)
(2) edge (4)
(2) edge (7)
(2) edge (8);

\path
(3) edge (8)
(3) edge (4)
(3) edge (9);

\path
(4) edge (9);

\path
(6) edge (3)
(6) edge (4);

\end{tikzpicture}

\end{tabular}

\caption{\label{maximal-3-2-config-1} Two cases to consider in the proof of Theorem~\ref{main-2}}
\end{center}
\end{figure}

\begin{itemize}
\item If vertex 1 is removed in $M$, then vertices 7 and 10 will be of degree 1 and can also be removed by Lemma~\ref{lemma-assumptions}. Moreover, the vertices 6 and 9 will have the same neighbourhoods, and by the same lemma, one of these vertices can be removed. The same applies to vertices 5 and 8, resulting in a graph on 5 vertices induced by, say, vertices 2, 3, 4, 5, 6, and any graph on 5 vertices is word-representable.  
\item If the vertex 2 is removed in $M$, then vertices 7 and 8 will be of degree 1 and thus can also be removed by Lemma~\ref{lemma-assumptions}. This leaves us with a graph on 6 vertices which is word-representable because it is different from $W_5$, the only non-word-representable graph on 6 vertices. 
\item If vertex 5 is removed, then we obtain the graph $M_2$ in Figure~\ref{maximal-3-2-config-1}, which is word-representable because of the semi-transitive orientation we provide in the figure (we omit a justification that the orientation is semi-transitive).
\item Finally, if vertex 3 is removed in $M$, we will obtain the non-word-representable graph $M_1$ in Figure~\ref{maximal-3-2-config-1} (it contains $T_4$).
\end{itemize}
To complete our proof, we need to remove a vertex in $M_1$. Unfortunately, no symmetries can be applied here, so we have to consider 9 cases.

\begin{itemize}
\item If vertex 1 is removed then vertex 10 will be of degree 1 and it can be removed by Lemma~\ref{lemma-assumptions}. The resulting graph is word-representable because it is clearly a subgraph of $T_4$, and $T_4$ is a minimal non-word-representable. 
\item If vertex 2 is removed then vertex 8 will be of degree 1 and it can be removed by Lemma~\ref{lemma-assumptions}. The resulting graph is precisely the non-word-representable graph $T_1$.
\item If vertex 3 is removed then vertices 8 and 9 will be of degree 1 and they can be removed by Lemma~\ref{lemma-assumptions}. The resulting graph is on 6 vertices, it is not $W_5$ and thus is word-representable. 
\item If vertex 4 is removed then vertices 9 and 10 will be of degree 1 and they can be removed by Lemma~\ref{lemma-assumptions}. The resulting graph is on 6 vertices, it is not $W_5$ and thus is word-representable. 
\item If vertex 5 is removed then we will obtain a word-representable graph $M_3$ in Figure~\ref{maximal-3-2-config-2}, where we provide a semi-transitive orientation of the graph without justification. 
\item If vertex 6 is removed then we will obtain a word-representable graph $M_4$ in Figure~\ref{maximal-3-2-config-2}, where we provide a semi-transitive orientation of the graph without justification. 
\item If vertex 8 is removed then we will obtain the graph $M_5$ in Figure~\ref{maximal-3-2-config-2}. This graph contains $T_1$ as an induced subgraph (remove vertex 2 to see it).To complete this case, we need to remove a vertex in $M_5$ other than vertex 2, to make sure that a word-representable graph would be obtained. 

\begin{itemize}
\item Removing vertex 1, which is clearly equivalent to removing vertex 3, gives vertex 8 of degree 1 which can be removed by Lemma~\ref{lemma-assumptions}. Moreover, one of vertices 6 and 9 can be removed by Lemma~\ref{lemma-assumptions} because they have the same neighbourhood. This results in a graph on 5 vertices, but any such graph is word-representable.  
\item Removing vertex 4 gives two vertices, 9 and 10, that can be removed by Lemma~\ref{lemma-assumptions}. The resulting graph is on 5 vertices and it must word-representable. 
\item Removing vertex 5 is equivalent to removing vertices 7 and 8 in the graph $M_2$ in Figure~\ref{maximal-3-2-config-1}, so this graph is word-representable.
\item Removing vertex 6 is equivalent to removing vertices 9 and 10 in the graph $M_2$ in Figure~\ref{maximal-3-2-config-1}
\item Finally, removing vertex 9, which is clearly equivalent to removing vertex 10, gives the graph obtained from the semi-transitively oriented graph $M_6$ in Figure~\ref{maximal-3-2-config-2}, and it is word-representable. 
\end{itemize}

\item If vertex 9 is removed then the semi-transitively oriented graph $M_6$ in Figure~\ref{maximal-3-2-config-2} is obtained (we omit justification that the orientation is indeed semi-transitive).
\item Finally, if vertex 10 is removed then we will obtain the minimal non-word-representable graph $T_4$.
\end{itemize}

\begin{figure}
\begin{center}

\begin{tabular}{cc}

\begin{tikzpicture}[->,>=stealth', shorten >=1pt,node distance=1cm,auto,main node/.style={fill,circle,draw,inner sep=0pt,minimum size=5pt}]

\node[main node] (1) {};
\node[main node] (2) [right of=1] {};
\node[main node] (3) [below of=2] {};
\node[main node] (4) [left of=3] {};
\node[main node] (6) [below left of=4] {};
\node[main node] (8) [below right of=2,yshift=0.2cm] {};
\node[main node] (9) [below left of=3, xshift=0.2cm] {};
\node[main node] (10) [above left of=4,yshift=-0.2cm] {};

\node [left of=1,xshift=0.8cm,yshift=0.2cm] {{\small 1}};
\node [right of=2,xshift=-0.7cm,yshift=0.1cm] {{\small 2}};
\node [right of=3,xshift=-0.7cm] {{\small 3}};
\node [left of=4,xshift=0.9cm,yshift=0.3cm] {{\small 4}};
\node [left of=6,xshift=0.7cm,yshift=-0.4] {{\small 6}};
\node [right of=8,xshift=-0.7cm] {{\small 8}};
\node [left of=9,xshift=0.8cm,yshift=-0.2cm] {{\small 9}};
\node (11) [left of=10,xshift=0.7cm] {{\small 10}};

\node [left of=11,xshift=0.2cm] {$M_3=$};

\path
(1) edge (2)
(1) edge (3)
(1) edge (4)
(1) edge (6)
(1) edge (10);

\path
(2) edge (3)
(2) edge (4)
(2) edge (8);

\path
(3) edge (9)
(3) edge (8)
(3) edge (4);

\path
(4) edge (9);

\path
(6) edge (3)
(6) edge (4);

\path
(10) edge (4);

\end{tikzpicture}

&

\begin{tikzpicture}[->,>=stealth', shorten >=1pt, node distance=1cm,auto,main node/.style={fill,circle,draw,inner sep=0pt,minimum size=5pt}]

\node[main node] (1) {};
\node[main node] (2) [right of=1] {};
\node[main node] (3) [below of=2] {};
\node[main node] (4) [left of=3] {};
\node[main node] (5) [above right of=2] {};
\node[main node] (8) [below right of=2,yshift=0.2cm] {};
\node[main node] (9) [below left of=3, xshift=0.2cm] {};
\node[main node] (10) [above left of=4,yshift=-0.2cm] {};

\node [left of=1,xshift=0.8cm,yshift=0.2cm] {{\small 1}};
\node [right of=2,xshift=-0.7cm,yshift=0.1cm] {{\small 2}};
\node [right of=3,xshift=-0.7cm] {{\small 3}};
\node [left of=4,xshift=0.9cm,yshift=0.3cm] {{\small 4}};
\node [right of=5,xshift=-0.7cm] {{\small 5}};
\node [right of=8,xshift=-0.7cm] {{\small 8}};
\node [left of=9,xshift=0.8cm,yshift=-0.2cm] {{\small 9}};
\node (11) [left of=10,xshift=0.7cm] {{\small 10}};

\node [left of=11,xshift=0.2cm] {$M_4=$};

\path
(1) edge (2)
(1) edge (3)
(1) edge (4)
(1) edge (5)
(1) edge (10);

\path
(2) edge (3)
(2) edge (4)
(2) edge (5)
(2) edge (8);

\path
(3) edge (9)
(3) edge (8)
(3) edge (4)
(3) edge (5);

\path
(4) edge (9);

\path
(10) edge (4);

\end{tikzpicture}

\end{tabular}

\begin{tabular}{cc}

\begin{tikzpicture}[node distance=1cm,auto,main node/.style={fill,circle,draw,inner sep=0pt,minimum size=5pt}]

\node[main node] (1) {};
\node[main node] (2) [right of=1] {};
\node[main node] (3) [below of=2] {};
\node[main node] (4) [left of=3] {};
\node[main node] (5) [above right of=2] {};
\node[main node] (6) [below left of=4] {};
\node[main node] (9) [below left of=3, xshift=0.2cm] {};
\node[main node] (10) [above left of=4,yshift=-0.2cm] {};

\node [left of=1,xshift=0.8cm,yshift=0.2cm] {{\small 1}};
\node [right of=2,xshift=-0.7cm,yshift=0.1cm] {{\small 2}};
\node [right of=3,xshift=-0.7cm] {{\small 3}};
\node [left of=4,xshift=0.9cm,yshift=0.3cm] {{\small 4}};
\node [right of=5,xshift=-0.7cm] {{\small 5}};
\node [left of=6,xshift=0.7cm,yshift=-0.4] {{\small 6}};
\node [left of=9,xshift=0.8cm,yshift=-0.2cm] {{\small 9}};
\node (11) [left of=10,xshift=0.7cm] {{\small 10}};

\node [left of=11,xshift=0.2cm] {$M_5=$};

\path
(10) edge (1)
(10) edge (4)
(9) edge (3)
(9) edge (4)
(6) edge (1)
(6) edge (3);

\path
(1) edge (2)
(1) edge (3)
(1) edge (4)
(1) edge (5)
(1) edge (10);

\path
(2) edge (3)
(2) edge (4)
(2) edge (5);

\path
(3) edge (9)
(3) edge (4)
(3) edge (5);

\path
(4) edge (6)
(4) edge (9);

\path
(10) edge (4);

\end{tikzpicture}

&

\begin{tikzpicture}[->,>=stealth', shorten >=1pt, node distance=1cm,auto,main node/.style={fill,circle,draw,inner sep=0pt,minimum size=5pt}]

\node[main node] (1) {};
\node[main node] (2) [right of=1] {};
\node[main node] (3) [below of=2] {};
\node[main node] (4) [left of=3] {};
\node[main node] (5) [above right of=2] {};
\node[main node] (6) [below left of=4] {};
\node[main node] (8) [below right of=2,yshift=0.2cm] {};
\node[main node] (10) [above left of=4,yshift=-0.2cm] {};

\node [left of=1,xshift=0.8cm,yshift=0.2cm] {{\small 1}};
\node [right of=2,xshift=-0.7cm,yshift=0.1cm] {{\small 2}};
\node [right of=3,xshift=-0.7cm] {{\small 3}};
\node [left of=4,xshift=0.9cm,yshift=0.3cm] {{\small 4}};
\node [right of=5,xshift=-0.7cm] {{\small 5}};
\node [left of=6,xshift=0.7cm,yshift=-0.4] {{\small 6}};
\node [right of=8,xshift=-0.7cm] {{\small 8}};
\node (11) [left of=10,xshift=0.7cm] {{\small 10}};

\node [left of=11,xshift=0.2cm] {$M_6=$};

\path
(1) edge (2)
(1) edge (3)
(1) edge (4)
(1) edge (5)
(1) edge (6)
(1) edge (10);

\path
(2) edge (3)
(2) edge (8);

\path
(3) edge (8);

\path
(4) edge (6)
(4) edge (3)
(4) edge (10)
(4) edge (2);

\path
(5) edge (2)
(5) edge (3);

\path
(6) edge (3);

\end{tikzpicture}

\end{tabular}

\caption{\label{maximal-3-2-config-2} Four subcases to consider in the proof of Theorem~\ref{main-2}}
\end{center}
\end{figure}
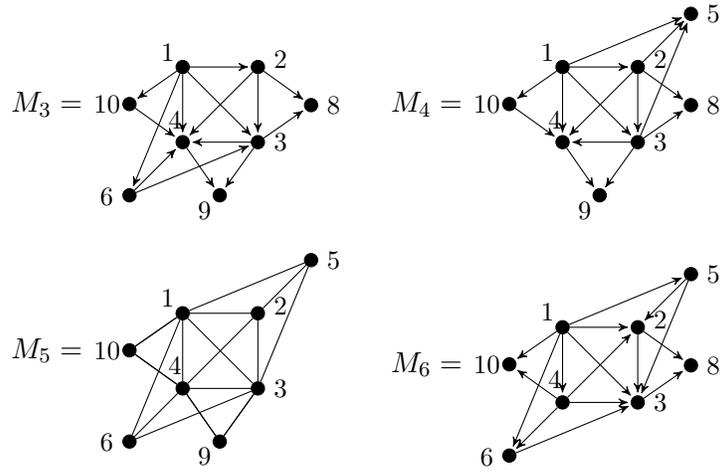

\end{proof}

\section{Semi-transitive orientations on split graphs}

Let $S_n=(E_{n-m},K_m)$ be a word-representable split graph. Then by Theorem~\ref{key-thm}, $S_n$ admits a semi-transitive orientation. Further, by Lemma~\ref{lem-tran-orie} we known that any such orientation induces a transitive orientation on $K_m$ that can be presented schematically as in Figure~\ref{schem-structure}, where we show the longest directed path in $K_m$, denoted by $\vec{P}$, but do not draw the other edges in $K_m$ even though they exist.

Theorems~\ref{semi-tran-groups} and~\ref{relative-order} below describe the structure of semi-transitive orientations in arbitrary word-representable split graph. 

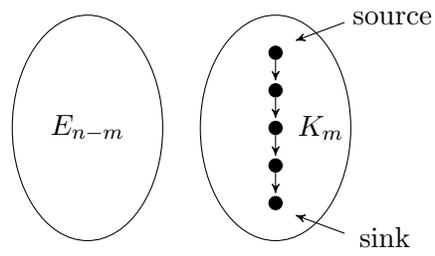
\begin{figure}
\begin{center}

\begin{tikzpicture}[->,>=stealth', shorten >=1pt,node distance=0.5cm,auto,main node/.style={fill,circle,draw,inner sep=0pt,minimum size=5pt}]

\node (1) {};
\node[main node] (2) [right of=1,xshift=5mm] {};
\node[main node] (3) [above of=2] {};
\node[main node] (4) [above of=3] {};
\node[main node] (5) [below of=2] {};
\node[main node] (6) [below of=5] {};

\node (7) [above of=4,yshift=-4mm,xshift=1mm] {};
\node (8) [above right of=7,xshift=6mm]{};
\node (9) [right of=8]{source};

\node (10) [below of=6,yshift=4mm,xshift=1mm] {};
\node (11) [below right of=10,xshift=6mm]{};
\node (12) [right of=11,xshift=-1mm]{sink};

\node [right of=2,xshift=1mm]{$K_m$};
\node [right of=2,xshift=-30mm]{$E_{n-m}$};

\path
(8) edge (7)
(11) edge (10);

\draw [] (-1.5,0) ellipse (10mm and 15mm); 
\draw [] (1,0) ellipse (10mm and 15mm); 

\path
(4) edge (3)
(3) edge (2)
(2) edge (5)
(5) edge (6);

\end{tikzpicture}

\caption{\label{schem-structure} A schematic structure of a semi-transitively oriented split graph}
\end{center}
\end{figure}

\begin{theorem}\label{semi-tran-groups} Any semi-transitive orientation of $S_n=(E_{n-m},K_m)$ subdivides the set of all vertices in $E_{n-m}$ into three, possibly empty, groups of the types shown schematically in Figure~\ref{3-groups}. In that figure,  
\begin{itemize}
\item similarly to Figure~\ref{schem-structure}, the vertical oriented paths are a schematic way to show (parts of) $\vec{P}$;
\item the vertical oriented paths in the types A and B represent up to $m$ consecutive vertices in $\vec{P}$; 
\item the vertical oriented path in the type C contains all $m$ vertices in $\vec{P}$, and it is subdivided into three groups of consecutive vertices, with the middle group possibly containing no vertices; the group of vertices containing the source (resp., sink) is the {\em source-group} (resp., {\em sink-group});
\item the vertices to the left of the vertical paths are from  $E_{n-m}$. 
\end{itemize} \end{theorem}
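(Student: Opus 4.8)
The plan is to reduce everything to the combinatorics of how a single vertex of $E_{n-m}$ attaches to the directed Hamiltonian path of the clique. By Lemma~\ref{lem-tran-orie} the given semi-transitive orientation induces a transitive tournament on $K_m$; write $\vec P=v_1\to v_2\to\cdots\to v_m$ for its Hamiltonian path, so that the clique edge between $v_i$ and $v_j$ is $v_i\to v_j$ exactly when $i<j$. Since every neighbour of a vertex $u\in E_{n-m}$ lies in $K_m$, the local situation at $u$ is captured by the two index sets $I(u)=\{i:v_i\to u\}$ and $O(u)=\{i:u\to v_i\}$, and the theorem amounts to showing that the pair $(I(u),O(u))$ always has one of three shapes; the three ``groups'' of the statement are then just the sets of vertices realising each shape, which partition $E_{n-m}$.

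First I would extract the single consequence of acyclicity that is needed: when $I(u)$ and $O(u)$ are both non-empty, $\max I(u)<\min O(u)$. Indeed $v_j\to u$ and $u\to v_i$ with $i\le j$ forces $i<j$ (the case $i=j$ being impossible), hence the clique edge $v_i\to v_j$, producing the directed triangle $v_j\to u\to v_i\to v_j$. Next I would apply shortcut-freeness in two steps. Step~(a): $I(u)$ and $O(u)$ are each intervals of $\{1,\dots,m\}$. If, say, $a<b<c$ with $v_a,v_c\in I(u)$ but the pair $\{v_b,u\}$ not an edge, then on $\{v_a,v_b,v_c,u\}$ the orientation is acyclic with unique source $v_a$, unique sink $u$, Hamiltonian directed path $v_a\to v_b\to v_c\to u$, source-to-sink edge $v_a\to u$, and the forward chord $v_b\to u$ — which semi-transitivity would demand — absent; this is a shortcut. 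The claim for $O(u)$ is the mirror image, using the path $u\to v_a\to v_b\to v_c$ shortcut by $u\to v_c$. Step~(b): if both $I(u)$ and $O(u)$ are non-empty then $I(u)$ is an \emph{initial} segment and $O(u)$ a \emph{final} segment of $\vec P$. If $a:=\min I(u)>1$, then $\{v_{a-1},u\}$ is a non-edge (it cannot lie in $I(u)$ by minimality, nor in $O(u)$ since that would give $a-1\ge\min O(u)>\max I(u)\ge a$), and on $\{v_{a-1},v_a,u,v_{\min O(u)}\}$ the path $v_{a-1}\to v_a\to u\to v_{\min O(u)}$ is shortcut by $v_{a-1}\to v_{\min O(u)}$ with the chord $v_{a-1}\to u$ absent — again a shortcut. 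That $\max O(u)=m$ is symmetric, using $\{v_{\max I(u)},u,v_d,v_{d+1}\}$ with $d:=\max O(u)<m$.

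Assembling the cases: if $O(u)=\emptyset$, all edges at $u$ point towards $u$ and $N(u)$ is an interval of $\vec P$; if $I(u)=\emptyset$, all edges point away from $u$ and $N(u)$ is an interval; these are the two ``block'' types (A and B) of Figure~\ref{3-groups}, realising up to $m$ consecutive vertices of $\vec P$. If both sets are non-empty, then $N(u)=\{1,\dots,\max I(u)\}\cup\{\min O(u),\dots,m\}$, with the in-edges forming the source-group, the out-edges the sink-group, and the possibly empty middle group consisting of the clique vertices not adjacent to $u$; this is type~C. A degree-$0$ vertex of $E_{n-m}$, if present, may be assigned to either block group. Partitioning $E_{n-m}$ according to which of the three shapes each vertex realises gives exactly the statement.

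I expect no conceptual obstacle: the work is purely in the bookkeeping of the three small four-vertex configurations — checking in each that the orientation is acyclic with a single source and a single sink joined by a directed Hamiltonian path, that the source-to-sink edge is present, and that a required forward chord is missing — so that each genuinely matches the definition of a shortcut. The existence and forward orientation of all the clique edges appearing in these configurations is automatic from the transitive tournament structure on $K_m$, so that point needs no separate argument.
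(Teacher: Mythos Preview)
Your proof is correct and follows essentially the same line as the paper's own argument: both analyse a single vertex $u\in E_{n-m}$, use acyclicity to force in-neighbours to precede out-neighbours along $\vec P$, use shortcut-freeness on four-vertex configurations to force each of the in- and out-neighbourhoods to be an interval, and then (in the mixed case) to force these intervals to be initial and final segments. The only difference is presentational --- your index-set formulation $I(u),O(u)$ is tidier than the paper's pictorial case analysis, and you should note explicitly (it is implicit via your $\max I(u)<\min O(u)$ inequality) that in Step~(a) the intermediate index $b$ cannot lie in the opposite set.
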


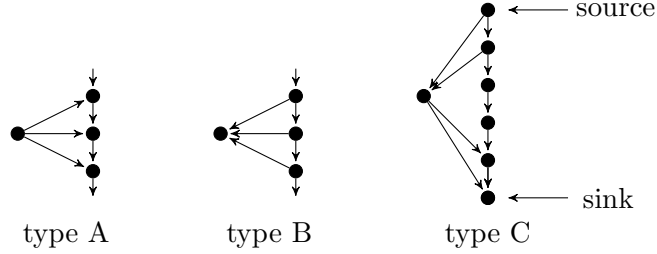
\begin{figure}
\begin{center}

\begin{tabular}{ccc}

\begin{tikzpicture}[->,>=stealth', shorten >=1pt,node distance=0.5cm,auto,main node/.style={fill,circle,draw,inner sep=0pt,minimum size=5pt}]

\node[main node] (1) {};
\node[main node] (2) [right of=1,xshift=5mm] {};
\node[main node] (3) [above of=2] {};
\node (4) [above of=3] {};
\node[main node] (5) [below of=2] {};
\node (6) [below of=5] {};

\node (7) [above of=4,yshift=-4mm,xshift=1mm] {};
\node (8) [above right of=7,xshift=6mm]{};

\node (10) [below of=6,yshift=4mm,xshift=1mm] {};
\node (11) [below right of=10,xshift=6mm]{};

\node (11) [below left of=5,yshift=-5mm]{type A};

\path
(4) edge (3)
(3) edge (2)
(2) edge (5)
(5) edge (6);

\path
(1) edge (2)
(1) edge (3)
(1) edge (5);

\end{tikzpicture}

&
\begin{tikzpicture}[->,>=stealth', shorten >=1pt,node distance=0.5cm,auto,main node/.style={fill,circle,draw,inner sep=0pt,minimum size=5pt}]

\node[main node] (1) {};
\node[main node] (2) [right of=1,xshift=5mm] {};
\node[main node] (3) [above of=2] {};
\node (4) [above of=3] {};
\node[main node] (5) [below of=2] {};
\node (6) [below of=5] {};

\node (7) [above of=4,yshift=-4mm,xshift=1mm] {};
\node (8) [above right of=7,xshift=6mm]{};

\node (10) [below of=6,yshift=4mm,xshift=1mm] {};
\node (11) [below right of=10,xshift=6mm]{};

\node (11) [below left of=5,yshift=-5mm]{type B};

\path
(4) edge (3)
(3) edge (2)
(2) edge (5)
(5) edge (6);

\path
(2) edge (1)
(3) edge (1)
(5) edge (1);

\end{tikzpicture}

&

\begin{tikzpicture}[->,>=stealth', shorten >=1pt,node distance=0.5cm,auto,main node/.style={fill,circle,draw,inner sep=0pt,minimum size=5pt}]

\node[main node] (1) {};
\node[main node] (2) [below right of=1,xshift=5mm] {};
\node[main node] (3) [above of=2] {};
\node[main node] (4) [above of=3] {};
\node[main node] (5) [below of=2] {};
\node[main node] (6) [below of=5] {};
\node[main node] (13) [above of=4] {};
\node[main node] (14) [below of=5] {};

\node (7) [above of=13,yshift=-5mm,xshift=1mm] {};
\node (8) [right of=7,xshift=6mm]{};
\node (9) [right of=8]{source};

\node (10) [below of=14,yshift=5mm,xshift=1mm] {};
\node (11) [right of=10,xshift=6mm]{};
\node (12) [right of=11,xshift=-1mm]{sink};

\node [below of=14]{type C};

\path
(8) edge (7)
(11) edge (10);

\path
(13) edge (1)
(4) edge (1)
(1) edge (14)
(1) edge (5)
(4) edge (3)
(3) edge (2)
(2) edge (5)
(5) edge (6)
(13) edge (4)
(5) edge (14);

\end{tikzpicture}

\end{tabular}

\caption{\label{3-groups} Three types of vertices in $E_{n-m}$ under a semi-transitive orientation of $(E_{n-m},K_m)$. The vertical oriented paths are a schematic way to show (parts of) $\vec{P}$}
\end{center}
\end{figure}

\begin{proof} First observe that to avoid directed cycles, the partial orientation

\vspace{-0.8cm}

\begin{center}
\begin{tabular}{ccc}

\begin{tikzpicture}[node distance=0.5cm,auto,main node/.style={fill,circle,draw,inner sep=0pt,minimum size=5pt}]

\node[main node] (1) {};
\node [above of=1,yshift=-5mm,xshift=-3mm] {$x$};
\node[main node] (2) [right of=1,xshift=5mm] {};
\node[main node] (3) [above of=2] {};
\node [right of=2,xshift=-1mm] {$x_2$};
\node [right of=3,xshift=-1mm] {$x_1$};
\node [right of=5] {$x_t$};
\node [above right of=5,xshift=1mm,yshift=2mm] {$\vdots$};
\node (4) [above of=3] {};
\node[main node] (5) [below of=2] {};
\node (6) [below of=5] {};

\node (7) [above of=4,yshift=-4mm,xshift=1mm] {};
\node (8) [above right of=7,xshift=6mm]{};

\node (10) [below of=6,yshift=4mm,xshift=1mm] {};
\node (11) [below right of=10,xshift=6mm]{};

\path
(4) [->,>=stealth', shorten >=1pt] edge (3)
(3) edge (2)
(2) edge (5)
(5) edge (6);

\path
(1) [->,>=stealth', shorten >=1pt] edge (3);
\path
(1) edge (2)
(1) edge (5);

\end{tikzpicture}

&

\begin{tikzpicture}[->,>=stealth', shorten >=1pt,node distance=0.5cm,auto,main node/.style={fill,circle,draw,inner sep=0pt,minimum size=5pt}]

\node (1) {};
\node (2) [above of=1,yshift=1cm,xshift=-1cm] {forces};
\end{tikzpicture}

&

\begin{tikzpicture}[->,>=stealth', shorten >=1pt,node distance=0.5cm,auto,main node/.style={fill,circle,draw,inner sep=0pt,minimum size=5pt}]

\node[main node] (1) {};
\node [above of=1,yshift=-5mm,xshift=-3mm] {$x$};
\node[main node] (2) [right of=1,xshift=5mm] {};
\node[main node] (3) [above of=2] {};
\node [right of=2,xshift=-1mm] {$x_2$};
\node [right of=3,xshift=-1mm] {$x_1$};
\node [right of=6,xshift=-1mm,yshift=2mm] {$x_t$};
\node [above right of=5,yshift=-1mm] {$\vdots$};
\node (4) [above of=3] {};
\node[main node] (5) [below of=2] {};
\node (6) [below of=5] {};

\node (7) [above of=4,yshift=-4mm,xshift=1mm] {};
\node (8) [above right of=7,xshift=6mm]{};

\node (10) [below of=6,yshift=4mm,xshift=1mm] {};
\node (11) [below right of=10,xshift=6mm]{};

\path
(4) edge (3)
(3) edge (2)
(2) edge (5)
(5) edge (6);

\path
(1) edge (2)
(1) edge (3)
(1) edge (5);

\end{tikzpicture}
\end{tabular}
\end{center}

\vspace{-0.8cm}

Moreover, the vertices $x_1$, $x_2,\ldots, x_t$ must be consecutive on $\vec{P}$. Indeed,  if $x_i$ and $x_{i+1}$ are not consecutive for some $i$, $1\leq i\leq t-1$ (there is a vertex on the path between $x_i$ and $x_{i+1}$ not connected to $x$) then the vertices on $\vec{P}$ between $x_1$ and $x_{i+1}$, along with $x$,  form a shortcut with the shortcutting edge $x\rightarrow x_{i+1}$.

On the other hand, the partial orientation
\vspace{-0.8cm}

\begin{center}
\begin{tabular}{ccc}

\begin{tikzpicture}[node distance=0.5cm,auto,main node/.style={fill,circle,draw,inner sep=0pt,minimum size=5pt}]

\node[main node] (1) {};
\node [above of=1,yshift=-5mm,xshift=-3mm] {$x$};
\node[main node] (2) [right of=1,xshift=5mm] {};
\node[main node] (3) [above of=2] {};
\node [right of=2,xshift=-1mm] {$x_2$};
\node [right of=3,xshift=-1mm] {$x_1$};
\node [right of=6,xshift=-1mm,yshift=2mm] {$x_t$};
\node [above right of=5,yshift=-1mm] {$\vdots$};
\node (4) [above of=3] {};
\node[main node] (5) [below of=2] {};
\node (6) [below of=5] {};

\node (7) [above of=4,yshift=-4mm,xshift=1mm] {};
\node (8) [above right of=7,xshift=6mm]{};

\node (10) [below of=6,yshift=4mm,xshift=1mm] {};
\node (11) [below right of=10,xshift=6mm]{};

\path
(4) [->,>=stealth', shorten >=1pt] edge (3)
(3) edge (2)
(2) edge (5)
(5) edge (6);

\path
(3) [->,>=stealth', shorten >=1pt] edge (1);
\path
(1) edge (2)
(1) edge (5);

\end{tikzpicture}

&

\begin{tikzpicture}[->,>=stealth', shorten >=1pt,node distance=0.5cm,auto,main node/.style={fill,circle,draw,inner sep=0pt,minimum size=5pt}]

\node (1) {};
\node (2) [above of=1,yshift=1cm,xshift=-1cm] {can either be extended to};
\end{tikzpicture}

&

\begin{tikzpicture}[->,>=stealth', shorten >=1pt,node distance=0.5cm,auto,main node/.style={fill,circle,draw,inner sep=0pt,minimum size=5pt}]

\node[main node] (1) {};
\node [above of=1,yshift=-5mm,xshift=-3mm] {$x$};
\node[main node] (2) [right of=1,xshift=5mm] {};
\node[main node] (3) [above of=2] {};
\node [right of=2,xshift=-1mm] {$x_2$};
\node [right of=3,xshift=-1mm] {$x_1$};
\node [right of=6,xshift=-1mm,yshift=2mm] {$x_t$};
\node [above right of=5,yshift=-1mm] {$\vdots$};
\node (4) [above of=3] {};
\node[main node] (5) [below of=2] {};
\node (6) [below of=5] {};

\node (7) [above of=4,yshift=-4mm,xshift=1mm] {};
\node (8) [above right of=7,xshift=6mm]{};

\node (10) [below of=6,yshift=4mm,xshift=1mm] {};
\node (11) [below right of=10,xshift=6mm]{};

\path
(4) edge (3)
(3) edge (2)
(2) edge (5)
(5) edge (6);

\path
(2) edge (1)
(3) edge (1)
(5) edge (1);

\end{tikzpicture}
\end{tabular}
\end{center}

\vspace{-0.8cm}

\noindent 
with $x_1$, $x_2,\ldots,x_t$ being consecutive to avoid $x_1\rightarrow x$ being a shortcutting edge (by the reasons similar to the previous case), or to avoid directed cycles, all edges of the form $x_i\rightarrow x$ must be above of all edges of the form $x\rightarrow x_i$:
\vspace{-0.8cm}
\begin{center}
\begin{tikzpicture}[->,>=stealth', shorten >=1pt,node distance=0.5cm,auto,main node/.style={fill,circle,draw,inner sep=0pt,minimum size=5pt}]

\node[main node] (1) {};
\node [above of=1,yshift=-5mm,xshift=-3mm] {$x$};
\node[main node] (2) [right of=1,xshift=5mm] {};
\node[main node] (3) [above of=2] {};
\node [right of=2,xshift=-1mm,yshift=-1mm] {$x_s$};
\node [right of=3,xshift=-1mm,yshift=2mm] {$x_1$};
\node [above right of=2,yshift=1mm] {$\vdots$};
\node [right of=6,yshift=5mm,xshift=1mm] {$x_{s+1}$};
\node (4) [above of=3] {};
\node[main node] (5) [below of=2] {};
\node[main node] (6) [below of=5] {};
\node [right of=6,yshift=2mm,xshift=-2mm] {$\vdots$};
\node [below right of=6] {$x_t$};

\node (7) [above of=4,yshift=-4mm,xshift=1mm] {};
\node (8) [above right of=7,xshift=6mm]{};

\node (10) [below of=6,yshift=4mm,xshift=1mm] {};
\node (11) [below right of=10,xshift=6mm]{};
\node (12) [below of=6]{};

\path
(4) edge (3)
(3) edge (2)
(2) edge (5)
(5) edge (6)
(6) edge (12);

\path
(2) edge (1)
(3) edge (1)
(1) edge (5)
(1) edge (6);

\end{tikzpicture}
\end{center}

One can use arguments as above to see that to avoid shortcuts, the vertices $x_1$, $x_2,\ldots,x_s$ corresponding to the edges oriented towards the vertex $x$ must be consecutive on $\vec{P}$. So must be the vertices $x_{s+1}$, $x_{s+2},\ldots,x_t$. On the other hand, there are no restrictions on the vertices $x_s$ and $x_{s+1}$, so there can be some other vertices there on the path $\vec{P}$. 

To complete the theorem, we show that $x_1$ (resp., $x_t$) must be the source (resp., sink) in $\vec{P}$.
Indeed, supposed there exists a vertex $y$ on $\vec{P}$ such that $y\rightarrow x_1$ is an edge. Then the subgraph induced by the vertices $y, x_1, x, x_{s+1}$ is a shortcut with the shortcutting edge $y\rightarrow x_{s+1}$ (because the edge $y\rightarrow x$ is missing); contradiction. Similarly, if there exist a vertex $z$ on $\vec{P}$ such that $x_t\rightarrow z$ is an edge, then the graph induced by the vertices $x_1, x, x_t, z$ is a shortcut with the shortcutting edge $x_1\rightarrow z$ (becayse the edge $x\rightarrow z$ is missing); contradiction.
\end{proof}

\begin{remark} Adding to the path $\vec{P}$ the edge connecting the source to the sink, we obtain a cycle $\vec{C}$ in which one edge is oriented not in the same way as the others. But then all types of vertices in $E_{n-m}$ can be expressed in terms of single consecutive intervals of vertices in $\vec{C}$. Indeed, types A and B stay the same, while type C can now be schematically represented as 
\vspace{-0.8cm}
\begin{center}
\begin{tikzpicture}[->,>=stealth', shorten >=1pt,node distance=0.5cm,auto,main node/.style={fill,circle,draw,inner sep=0pt,minimum size=5pt}]

\node[main node] (1) {};
\node [above of=1,yshift=-5mm,xshift=-3mm] {$x$};
\node[main node] (2) [right of=1,xshift=5mm] {};
\node[main node] (3) [above of=2] {};
\node [right of=2,xshift=-1mm,yshift=-1mm] {$x_t$};
\node [right of=3,yshift=2mm] {$x_{s+1}$};
\node [above right of=2,yshift=1mm] {$\vdots$};
\node [right of=6,yshift=5mm,xshift=-1mm] {$x_1$};
\node (4) [above of=3] {};
\node[main node] (5) [below of=2] {};
\node[main node] (6) [below of=5] {};
\node [right of=6,yshift=2mm,xshift=-2mm] {$\vdots$};
\node [below right of=6] {$x_s$};

\node (7) [above of=4,yshift=-4mm,xshift=1mm] {};
\node (8) [above right of=7,xshift=6mm]{};

\node (10) [below of=6,yshift=4mm,xshift=1mm] {};
\node (11) [below right of=10,xshift=6mm]{};
\node (12) [below of=6]{};

\path
(4) edge (3)
(3) edge (2)
(5) edge (2)
(5) edge (6)
(6) edge (12);

\path
(2) edge (1)
(3) edge (1)
(1) edge (5)
(1) edge (6);

\end{tikzpicture}
\end{center}
\noindent
where the vertices $x_{s+1}$, $x_{s+2},\ldots, x_t,x_1,x_2,\ldots,x_s$ are consecutive.  
\end{remark}

There are additional restrictions on relative positions of the neighbours of vertices of the types A, B and C. These restrictions are given by the following theorem.

\begin{theorem}\label{relative-order} Let $S_n=(E_{n-m},K_m)$ be oriented semi-transitively. For a vertex $x\in E_{n-m}$ of the type C, presented schematically as
\vspace{-0.8cm}
\begin{center}
\begin{tikzpicture}[->,>=stealth', shorten >=1pt,node distance=0.5cm,auto,main node/.style={fill,circle,draw,inner sep=0pt,minimum size=5pt}]

\node[main node] (1) {};
\node [above of=1,yshift=-5mm,xshift=-3mm] {$x$};
\node[main node] (2) [right of=1,xshift=5mm] {};
\node[main node] (3) [above of=2] {};
\node [right of=2,xshift=-1mm,yshift=-1mm] {$x_s$};
\node [right of=3,xshift=-1mm,yshift=2mm] {$x_1$};
\node [above right of=2,yshift=1mm] {$\vdots$};
\node [right of=6,yshift=5mm,xshift=1mm] {$x_{s+1}$};
\node (4) [above of=3] {};
\node[main node] (5) [below of=2] {};
\node[main node] (6) [below of=5] {};
\node [right of=6,yshift=2mm,xshift=-2mm] {$\vdots$};
\node [below right of=6,yshift=1mm] {$x_t$};

\node (7) [above of=4,yshift=-4mm,xshift=1mm] {};
\node (8) [above right of=7,xshift=6mm]{};

\node (10) [below of=6,yshift=4mm,xshift=1mm] {};
\node (11) [below right of=10,xshift=6mm]{};

\path
(3) edge (2)
(2) edge (5)
(5) edge (6);

\path
(2) edge (1)
(3) edge (1)
(1) edge (5)
(1) edge (6);

\end{tikzpicture}
\end{center}

\noindent 
there is no vertex $y\in E_{n-m}$ of the type A or B, which is connected to both $x_s$ and $x_{s+1}$. Also, there is no vertex $y\in E_{n-m}$ of the type C such that either the source-group, or the sink-group of vertices given by $y$ (see the statement of Theorem~\ref{semi-tran-groups} for the definitions) contains both $x_s$ and $x_{s+1}$.
\end{theorem}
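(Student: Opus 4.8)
The plan is to derive both assertions from a single observation about four-vertex shortcuts. Recall from Theorem~\ref{semi-tran-groups} that, since $x$ is of type~C, the orientation contains the edges $x_s\rightarrow x$ and $x\rightarrow x_{s+1}$; moreover $x_s$ precedes $x_{s+1}$ on $\vec{P}$, so transitivity of the orientation on $K_m$ (Lemma~\ref{lem-tran-orie}) forces the edge $x_s\rightarrow x_{s+1}$. The feature I will exploit is that any vertex $y\in E_{n-m}$ distinct from $x$ is non-adjacent to $x$, because $E_{n-m}$ is an independent set.

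First I would prove the following claim: if a vertex $y\in E_{n-m}$ is joined to both $x_s$ and $x_{s+1}$ by edges that are either both directed out of $y$ or both directed into $y$, then the orientation is not semi-transitive. Suppose first $y\rightarrow x_s$ and $y\rightarrow x_{s+1}$. Then one checks that the subgraph induced by $\{y,x_s,x,x_{s+1}\}$ is a shortcut: it is acyclic; its unique source is $y$ and its unique sink is $x_{s+1}$; the directed path $y\rightarrow x_s\rightarrow x\rightarrow x_{s+1}$ visits every vertex; the edge $y\rightarrow x_{s+1}$ joins the source to the sink and is the shortcutting edge; and the orientation is not transitive because $y\rightarrow x_s$ and $x_s\rightarrow x$ are edges while $y\rightarrow x$ is not. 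Suppose instead $x_s\rightarrow y$ and $x_{s+1}\rightarrow y$. By the symmetric verification, the subgraph on $\{x_s,x,x_{s+1},y\}$ is a shortcut with source $x_s$, sink $y$, spanning path $x_s\rightarrow x\rightarrow x_{s+1}\rightarrow y$, shortcutting edge $x_s\rightarrow y$, and non-transitivity witnessed by the edges $x\rightarrow x_{s+1}$ and $x_{s+1}\rightarrow y$ together with the absence of $x\rightarrow y$. Either way we contradict semi-transitivity, proving the claim.

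It then remains to run the case analysis on the type of $y$. If $y$ is of type~A, every edge of $\vec{P}$ incident to $y$ is directed out of $y$, so being joined to both $x_s$ and $x_{s+1}$ gives $y\rightarrow x_s$ and $y\rightarrow x_{s+1}$, and the claim applies. If $y$ is of type~B, every such edge is directed into $y$, so we get $x_s\rightarrow y$ and $x_{s+1}\rightarrow y$, and the claim applies. If $y$ is of type~C and both $x_s$ and $x_{s+1}$ lie in the source-group of $y$, then both incident edges are directed into $y$ (the second case of the claim); if both lie in the sink-group of $y$, then both are directed out of $y$ (the first case of the claim). In every situation the claim yields a contradiction, which establishes the theorem.

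The only real work is the bookkeeping inside the claim — verifying that each of the two four-vertex configurations satisfies all of the defining properties of a shortcut — and this is routine once one notices that the missing edge $xy$ is precisely what destroys transitivity; I do not anticipate any genuine obstacle.
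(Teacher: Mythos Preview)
Your argument is correct. For vertices $y$ of type~A or type~B it coincides with the paper's proof: the same four-vertex shortcut on $\{y,x_s,x,x_{s+1}\}$, with shortcutting edge $y\rightarrow x_{s+1}$ (type~A) or $x_s\rightarrow y$ (type~B), is exhibited there.

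Where you diverge from the paper is in the type~C case. The paper does not reuse the four-vertex shortcut; instead it brings in the global source $x_1$ and the global sink $x_t$ of $\vec{P}$ and argues that $x_1\rightarrow x_t$ is a shortcutting edge for a longer directed path such as $x_1\rightarrow x_s\rightarrow x\rightarrow x_{s+1}\rightarrow y\rightarrow x_t$ (or $x_1\rightarrow y\rightarrow x_s\rightarrow x\rightarrow x_t$ in the other subcase). Your observation that the orientation of the two edges $yx_s$ and $yx_{s+1}$ already agrees in direction whenever $x_s$ and $x_{s+1}$ lie in the same group of $y$'s neighbours lets you fold this case into the same four-vertex claim, bypassing any reference to $x_1$ and $x_t$. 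This is a genuine simplification: it unifies all three cases under a single lemma and avoids the extra bookkeeping (and the implicit need to check that $x_1,x_s,x_{s+1},x_t,y$ are distinct) that the paper's longer path requires.
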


\begin{proof} If $y$ is of the type A, then the subgraph induced by the vertices $y$, $x_s$, $x$ and $x_{s+1}$ is a shortcut with the shortcutting edge being $y\rightarrow x_{s+1}$ (the edge $y\rightarrow x$ is missing). 

Similarly, if $y$ is of the type B, then the subgraph induced by the vertices $y$, $x_s$, $x$ and $x_{s+1}$ is a shortcut with the shortcutting edge being $x_s\rightarrow y$ (the edge $x\rightarrow y$ is missing).  

If $y$ is of the type C and both $x_s$ and $x_{s+1}$ belong to the same group of $y$'s neighbours, then $x_1\rightarrow x_t$ will be a shortcutting edge. Indeed, if both $x_s$ and $x_{s+1}$ belong to 
\begin{itemize} 
\item the source-group then $x_1\rightarrow x_s \rightarrow x \rightarrow x_{s+1} \rightarrow y \rightarrow x_t$ induces a non-transitive subgraph (the edge $y\rightarrow x$ is missing).
\item the sink-group then $x_1\rightarrow y \rightarrow x_s \rightarrow x  \rightarrow x_t$ induces a non-transitive subgraph (the edge $y\rightarrow x$ is missing).
\end{itemize}
\end{proof}

The following theorem is a classification theorem for semi-transitive orientations on split graphs.

\begin{theorem}\label{main-orientation} An orientation of a split graph $S_n=(E_{n-m},K_m)$ is semi-transitive if and only if 
\begin{itemize} 
\item $K_m$ is oriented transitively,
\item each vertex in $E_{n-m}$ is of one of the three types presented in Figure~\ref{3-groups}, and 
\item the restrictions in Theorem~\ref{relative-order} are satisfied. 
\end{itemize}\end{theorem}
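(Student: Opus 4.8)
The plan is to prove both directions of the equivalence by appealing to the structural results already established, so that very little new work is needed. The forward direction (semi-transitive implies the three conditions) is essentially a restatement of earlier theorems. First I would invoke Lemma~\ref{lem-tran-orie} to conclude that a semi-transitive orientation of $S_n$ induces a transitive orientation on $K_m$ with a unique source and sink, giving the directed Hamiltonian path $\vec{P}$; this is the first bullet. Then Theorem~\ref{semi-tran-groups} says precisely that every vertex of $E_{n-m}$ falls into one of the three types A, B, C of Figure~\ref{3-groups}, which is the second bullet. Finally, Theorem~\ref{relative-order} gives exactly the stated restrictions on relative positions of neighbours, which is the third bullet. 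So the forward implication is immediate from the conjunction of these three results, and I would phrase it in one short paragraph.

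The substantive content is the converse: given an orientation of $S_n$ in which $K_m$ is transitively oriented, each vertex of $E_{n-m}$ has one of the three admissible neighbourhood-patterns relative to $\vec{P}$, and the relative-order restrictions hold, I must show the orientation is acyclic and shortcut-free. Acyclicity is the easy half: any directed cycle would have to use at least one vertex $x\in E_{n-m}$ (since $K_m$ is transitively, hence acyclically, oriented), but each vertex of $E_{n-m}$ has degree-pattern A, B, or C with all its in-edges ``above'' all its out-edges on $\vec{P}$ (for type C the source-group feeds into $x$ and $x$ feeds into the sink-group, and for types A, B one of the two sets is empty), so traversing through $x$ strictly increases position along $\vec{P}$ in the natural total order coming from transitivity of $K_m$; hence no cycle can close up. I would make this precise by fixing the linear order $v_1 \to v_2 \to \cdots \to v_m$ on $K_m$, checking that every edge of $S_n$ not incident to $E_{n-m}$ goes ``up'' in this order, and that for each $x\in E_{n-m}$ there is a consistent way to slot $x$ into the order so that all its edges go up — for type A put $x$ just before its topmost neighbour, for type B just after its bottommost neighbour, for type C between the source-group and the sink-group. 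This yields a topological order of the whole of $S_n$, proving acyclicity.

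The main obstacle is ruling out shortcuts. Here I would argue that a shortcut, being acyclic with a single source and single sink joined by a Hamiltonian directed path together with a shortcutting edge, must involve at least one vertex of $E_{n-m}$ (a shortcut within $K_m$ is impossible since $K_m$ is transitively oriented), and in fact — because $E_{n-m}$ is independent — it can involve at most a bounded number of such vertices that must be interleaved with segments of $\vec{P}$. The key case analysis is: (i) a shortcut using exactly one vertex $x\in E_{n-m}$ and otherwise vertices of $K_m$; (ii) a shortcut using two or more vertices of $E_{n-m}$. For case (i), the vertices of $K_m$ on the directed path of the shortcut form a directed sub-path of $\vec{P}$, and the vertex $x$ sits on this path; since $x$'s in-neighbours and out-neighbours on $\vec{P}$ are each consecutive intervals (this is guaranteed by the type A/B/C description — it is exactly what Theorem~\ref{semi-tran-groups} forces), the would-be shortcutting edge is forced to be present because $K_m$ is complete, so no shortcut arises; the only subtle point is that $x$ itself cannot be the source or sink of the shortcut unless the intervals degenerate, and in that degenerate situation the shortcutting edge is again present inside $K_m$. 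For case (ii), two vertices $x, y\in E_{n-m}$ appearing on a common shortcut must be connected ``through'' segments of $\vec{P}$; chasing the source-to-sink path one sees that $x$ and $y$ (with their neighbours on $\vec{P}$) reproduce exactly the forbidden configurations of Theorem~\ref{relative-order} — $y$ of type A or B adjacent to the two ``boundary'' vertices $x_s, x_{s+1}$ of a type-C vertex $x$, or $y$ of type C with a group containing both $x_s$ and $x_{s+1}$ — and the hypothesis that the restrictions of Theorem~\ref{relative-order} are satisfied rules these out; any shortcut with three or more $E_{n-m}$-vertices contains one of these two-vertex sub-configurations as an induced subshortcut. I expect the bookkeeping in case (ii) — verifying that every shortcut necessarily contains one of the two minimal forbidden patterns as an induced subgraph — to be the genuinely delicate step, and I would organize it by first showing that in any shortcut the $E_{n-m}$-vertices lie on the source-to-sink directed path, then locating, among consecutive pairs along that path, a pair whose ``overlap'' on $\vec{P}$ realizes the forbidden configuration. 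Once both acyclicity and shortcut-freeness are established, the orientation is semi-transitive by definition, completing the converse and hence the theorem.
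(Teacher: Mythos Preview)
Your plan is essentially the same as the paper's: the forward direction is exactly the conjunction of Lemma~\ref{lem-tran-orie}, Theorem~\ref{semi-tran-groups} and Theorem~\ref{relative-order}, and the converse is a case analysis on which vertices of $E_{n-m}$ a hypothetical shortcut path $\vec{X}$ can visit. Your outline is in fact more careful than the paper in two places: you spell out acyclicity via a topological order (the paper jumps straight to shortcuts), and you treat case~(i) --- a single $E_{n-m}$-vertex on $\vec{X}$ --- explicitly, which the paper leaves implicit.

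Two small points are worth tightening. In case~(i) you write that ``the would-be shortcutting edge is forced to be present because $K_m$ is complete''. The shortcutting edge is present by assumption; what you must show is that \emph{every} edge required for transitivity is present. This is exactly what the consecutive-interval property gives: if $x$ is the only $E_{n-m}$-vertex on $\vec{X}$, then the source and sink of $\vec{X}$ lie in $K_m$ (or one of them is $x$ when $x$ has type~A or~B), all $K_m$-vertices on $\vec{X}$ lie between two neighbours of $x$ on $\vec{P}$, and hence all of them are neighbours of $x$; together with completeness of $K_m$ this makes the induced tournament transitive.

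For case~(ii) the paper's organisation is cleaner than your ``find a two-vertex forbidden pattern inside any larger shortcut'' and avoids the vague notion of an ``induced subshortcut''. The paper first bounds the number of $E_{n-m}$-vertices on $\vec{X}$ directly: there can be at most one vertex of type~A or~B (each such vertex is a source or a sink in the whole graph, and if the source and sink of $\vec{X}$ were both in $E_{n-m}$ the shortcutting edge would be a nonexistent $E_{n-m}$--$E_{n-m}$ edge), and, under the hypotheses of Theorem~\ref{relative-order}, $\vec{X}$ cannot pass through two type-C vertices (entering the second type-C vertex from its source-group after leaving the first via its sink-group forces the second vertex's source-group to contain both boundary points $x_s,x_{s+1}$ of the first). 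This leaves only the mixed case of one type-A/B vertex $y$ together with one type-C vertex $x$, and there the endpoints of $\vec{X}$ force $y$'s consecutive neighbourhood interval to straddle $x_s$ and $x_{s+1}$, contradicting Theorem~\ref{relative-order}. Recasting your case~(ii) along these lines will make the ``delicate bookkeeping'' you anticipate disappear.
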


\begin{proof} The forward direction follows from Lemma~\ref{lem-tran-orie}, Theorems~\ref{semi-tran-groups} and~\ref{relative-order}.

For the opposite direction, suppose that all restrictions are satisfied, but a shortcut is created with the longest path $\vec{X}$ from the source to the sink. Note that $\vec{X}$ must involve a node in $E_{n-m}$ because $K_m$ is oriented transitively. Also, $\vec{X}$ cannot involve more that one vertex of the type $A$ or $B$ because otherwise we obtain a contradiction with the beginning of $\vec{X}$ not being the beginning of the shortcutting edge (vertices of the type $A$ or $B$ are sinks or sources). 

Next, we note that $\vec{X}$ cannot pass through two vertices of the type $C$ if they satisfy the conditions of Theorem~\ref{relative-order}, which is easy to see from the following two figures representing schematically all possibilities: 

\begin{center}
\begin{tabular}{cc}
\begin{tikzpicture}[->,>=stealth', shorten >=1pt,node distance=0.5cm,auto,main node/.style={fill,circle,draw,inner sep=0pt,minimum size=5pt}]

\node[main node] (1) {};
\node[main node] (2) [below of=1] {};
\node[main node] (3) [below of=2] {};
\node[main node] (4) [below left of=3,xshift=-1cm] {};
\node[main node] (5) [below right of=3,xshift=1cm] {};
\node[main node] (6) [below of=3] {};
\node[main node] (7) [below of=6] {};
\node[main node] (8) [below of=7] {};

\path
(1) edge (2)
(1) edge (4)
(1) edge (5);

\path
(2) edge (3)
(2) edge (5)
(3) edge (6)
(3) edge (4)
(6) edge (7);

\path
(7) edge (8)
(5) edge (7)
(5) edge (8)
(4) edge (6)
(4) edge (8);
\end{tikzpicture}

&

\begin{tikzpicture}[->,>=stealth', shorten >=1pt,node distance=0.5cm,auto,main node/.style={fill,circle,draw,inner sep=0pt,minimum size=5pt}]

\node[main node] (1) {};
\node[main node] (2) [below of=1] {};
\node[main node] (3) [below of=2] {};
\node[main node] (4) [below left of=3,xshift=-1cm] {};
\node[main node] (5) [below right of=3,xshift=1cm] {};
\node[main node] (6) [below of=3] {};
\node[main node] (7) [below of=6] {};
\node[main node] (8) [below of=7] {};

\path
(1) edge (2)
(1) edge (4)
(1) edge (5);

\path
(2) edge (3)
(3) edge (5)
(3) edge (6)
(2) edge (4)
(6) edge (7);

\path
(7) edge (8)
(5) edge (7)
(5) edge (8)
(4) edge (6)
(4) edge (8);
\end{tikzpicture}

\end{tabular}
\end{center}

Finally, we need to consider the situations when $\vec{X}$ passes through 
\begin{itemize}
\item a vertex $y$ of type $A$ and a vertex $x$ of type $C$, and
\item a vertex $y$ of type $B$ and a vertex $x$ of type $C$
\end{itemize}
while respecting the conditions of Theorem~\ref{relative-order}. In either of these cases, both the shortcutting edge and the beginning of $\vec{X}$ must clearly start, or end, at $y$ (depending on $y$'s type). But then, in order for $\vec{X}$ to visit $x$, the vertex $y$ must be connected to both $x_s$ and $x_{s+1}$ in the terminology of Theorem~\ref{relative-order}; contradiction. \end{proof}

As a corollary of Theorem~\ref{main-orientation}, we can provide an alternative proof of the known result that the graph $T_3$ in Figure~\ref{nonRepTri} is non-word-representable. We provide just a sketch of the proof.   
 
\begin{corollary} The graph $T_3$ is non-word-representable. \end{corollary}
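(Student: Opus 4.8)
The plan is to use Theorem~\ref{main-orientation} to show that $T_3$ admits no semi-transitive orientation, and then invoke Theorem~\ref{key-thm}. Recall from Figure~\ref{nonRepTri} that $T_3$ is a split graph $(E_3,K_4)$: the clique $K_4$ consists of the four vertices labeled $8,9,10,11$ in the redrawn picture, and the three independent vertices $12,13,14$ each have degree $3$, being adjacent to three of the four clique vertices. Concretely, after relabeling $K_4=\{1,2,3,4\}$, the independent set consists of three vertices whose neighbourhoods are $\{1,2,3\}$, $\{1,2,4\}$ and $\{1,3,4\}$ (one for each way to omit a single clique vertex other than, say, vertex $1$ which lies in all of them); this matches the description ``$T_3$ is $K^{\triangle}_{\ell-1}$-like'' used elsewhere in the paper, but the explicit adjacency list is all we need.

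First I would fix, by Lemma~\ref{lem-tran-orie}, a transitive orientation of $K_4$ with Hamiltonian path $\vec P: 1\to 2\to 3\to 4$; since a vertex of degree $3$ in $E_3$ is adjacent to three of the four consecutive vertices of $\vec P$, it is adjacent either to $\{1,2,3\}$ (an initial segment), to $\{2,3,4\}$ (a final segment), or to $\{1,2,4\}$ or $\{1,3,4\}$ (segments with a gap). A vertex of $E_3$ adjacent to three \emph{consecutive} vertices of $\vec P$ can be of type A, B, or C in the sense of Figure~\ref{3-groups}; but a vertex adjacent to a non-consecutive triple such as $\{1,2,4\}$ cannot be of type A or B (those require the neighbours to form a consecutive block of $\vec P$), so it must be of type C, and moreover its ``middle group'' must be exactly the omitted vertex (here, vertex $3$), so its source-group is $\{1,2\}$ and its sink-group is $\{4\}$, or vice versa. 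Next I would enumerate: among the three independent vertices of $T_3$, at least two of them omit a vertex strictly between the endpoints of $\vec P$ and hence are forced to be of type C with a prescribed source/sink split. The key step is then to apply the restrictions of Theorem~\ref{relative-order}: two type-C vertices whose source-groups (or sink-groups) both contain the same pair $\{x_s,x_{s+1}\}$ of $\vec P$-consecutive vertices straddling the ``reversed'' edge cannot coexist; and a type-A or type-B vertex adjacent to such a straddling pair is likewise forbidden. I would check that in every labeling of $K_4$ by $\vec P$ and every assignment of types consistent with the adjacencies, two of the three independent vertices end up violating one of these restrictions — e.g. two type-C vertices whose source-groups both equal $\{1,2\}$, which straddle the source–sink ``shortcutting'' edge of the cycle $\vec C$ from the remark after Theorem~\ref{semi-tran-groups}.

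The main obstacle I anticipate is bookkeeping: there are a handful of choices for which clique vertex is the one common to all three neighbourhoods, for the direction of $\vec P$ (though reversal is a symmetry), and for how each type-C vertex splits its neighbours between source-group and sink-group, so the ``enumerate and check'' step has a moderate number of cases. The work is routine given Theorem~\ref{main-orientation} and Theorem~\ref{relative-order}, but one must be careful that the ``straddling pair'' $\{x_s,x_{s+1}\}$ referenced in Theorem~\ref{relative-order} is correctly identified for each type-C vertex — it is the consecutive pair of $\vec P$ lying on opposite sides of where the reversed edge of $\vec C$ sits. Once the forbidden configuration is exhibited in all cases, no semi-transitive orientation of $T_3$ exists, so by Theorem~\ref{key-thm}, $T_3$ is non-word-representable. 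Since the corollary asks only for a sketch, I would present the argument at the level of ``fix $\vec P$, observe two vertices are forced to be type C with overlapping source-groups, cite Theorem~\ref{relative-order}'' rather than writing out every subcase.
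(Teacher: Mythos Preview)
Your overall approach---show via Theorem~\ref{main-orientation} that no semi-transitive orientation exists by classifying the three independent vertices into types A/B/C and then deriving a contradiction from Theorem~\ref{relative-order}---is exactly the paper's approach, and the paper presents only a sketch at the same level of detail you propose.

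However, several of your illustrative details are wrong and would mislead the full write-up. First, a degree-$3$ vertex with \emph{consecutive} neighbours on $\vec P$ cannot be of type~C: type~C requires the neighbourhood to contain both the source and the sink of $\vec P$, and no three consecutive vertices among four do. So such a vertex is type~A or~B only, and its type is forced (not chosen). Second, your claim that ``at least two of them omit a vertex strictly between the endpoints of $\vec P$'' is false when the common clique-vertex sits in position $2$ or $3$ on $\vec P$: then exactly one independent vertex has a non-consecutive neighbourhood, so exactly one is type~C (this is why the paper says ``at least one, and at most two''). Third---and most important---your example of the contradiction is not what actually occurs: in no case do two type-C vertices of $T_3$ have equal source-groups. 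When the common vertex is the source (your labelling), the two type-C vertices have source-groups $\{1\}$ and $\{1,2\}$ and do \emph{not} violate the C-versus-C clause of Theorem~\ref{relative-order} against each other. The contradiction in every case comes from the A/B clause: the single type-A/B vertex is connected to both $x_s$ and $x_{s+1}$ for one of the type-C vertices. Once you correct these points the case analysis goes through quickly.
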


\begin{proof} Suppose that a semi-transitive orientation of $T_3$ exists, so that vertices in the independent set are of the type A, or type B, or type C. Then the 4-clique must induce a transitive orientation. Further, out of the vertices in the independent set, at least one, and at most two vertices are of the type $C$. In either of these cases, we obtain a contradiction with the restriction given by Theorem~\ref{relative-order}. \end{proof}

The following corollary of Theorem~\ref{main-orientation} generalizes Theorem~\ref{thm-K-Tri-m-w-r} (which is the case $k=2$ in the corollary). 

\begin{corollary}\label{K-ell-k} Let the split graph $K_{\ell}^k$ be obtained from the complete graph $K_{\ell}$, whose vertices are drawn on a circle, by adding $\ell$ vertices so that 
\begin{itemize}
\item each such vertex is connected to $k$ consecutive (on the circle) vertices in $K_{\ell}$; 
\item neighbourhoods of all these vertices are distinct; and
\item $\ell\geq 2k-1$.
\end{itemize}
Then $K_{\ell}^k$ is word-representable. \end{corollary}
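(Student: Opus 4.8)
The plan is to exhibit a semi-transitive orientation of $K_{\ell}^k$ explicitly and then invoke Theorem~\ref{main-orientation}, rather than verifying the shortcut-free condition by hand. First I would fix a labelling $1,2,\ldots,\ell$ of the vertices of $K_{\ell}$ as they appear around the circle, and orient $K_{\ell}$ transitively along the Hamiltonian path $1\rightarrow 2\rightarrow\cdots\rightarrow\ell$; this takes care of the first bullet of Theorem~\ref{main-orientation}. Each added vertex $v$ is adjacent to a block of $k$ consecutive vertices on the circle, i.e.\ to $\{i,i+1,\ldots,i+k-1\}$ with indices read cyclically. If this block does not ``wrap around'' past $\ell$, then on the path $\vec P$ it is simply a consecutive interval $x_1<\cdots<x_k$, and I would orient all edges at $v$ into $v$ (making $v$ a sink), so that $v$ is a vertex of type~B attached to the consecutive vertices $i,\ldots,i+k-1$ of $\vec P$. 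If the block wraps around, say it is $\{\ell-j+1,\ldots,\ell,1,\ldots,k-j\}$ for some $1\le j\le k-1$, then I would make $v$ a vertex of type~C: orient $\ell-j+1,\ldots,\ell$ into $v$ (the source-group) and orient $v$ into $1,\ldots,k-j$ (the sink-group), with the middle group empty. This assignment clearly realizes the second bullet of Theorem~\ref{main-orientation}.

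The substantive step is checking the third bullet, i.e.\ that the restrictions of Theorem~\ref{relative-order} hold. The only vertices of type~C are the (at most $k-1$) wrap-around vertices. For such a vertex $x$, the relevant ``boundary'' pair is $\{x_s,x_{s+1}\}=\{\ell,1\}$: the last vertex of its source-group and the first vertex of its sink-group. So I must argue that no other added vertex $y$ is adjacent to both $\ell$ and $1$ in a forbidden way. Here is where I would use the hypothesis $\ell\ge 2k-1$ together with the ``distinct neighbourhoods'' hypothesis. A type~A or type~B vertex $y$ adjacent to both $1$ and $\ell$ would have to contain $\{1,\ell\}$ inside a consecutive interval of $\vec P$ of length $\le k$; but the only consecutive interval of $\vec P$ containing both endpoints $1$ and $\ell$ is all of $\{1,\ldots,\ell\}$, which has length $\ell\ge 2k-1>k$, so no type~A/B vertex can do this. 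For the type~C condition I must rule out another wrap-around vertex $y$ whose source-group or sink-group contains both $1$ and $\ell$; since all wrap-around neighbourhoods are cyclic intervals of length $k$ containing the arc from $\ell$ to $1$, both $\ell$ and $1$ always lie in $y$'s wrapping part — but one lies in $y$'s source-group and the other in $y$'s sink-group (by our orientation convention, $\ell\in$ source-group, $1\in$ sink-group for every wrap-around vertex), so they never lie in the \emph{same} group, and the restriction is vacuously satisfied. Finally I should double-check the symmetric boundary pair for $x$: with an empty middle group the source-group ends at $\ell$ and the sink-group begins at $1$, so $\{x_s,x_{s+1}\}=\{\ell,1\}$ is the only pair to examine.

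The main obstacle I anticipate is making the cyclic index bookkeeping airtight: I need to be careful that the chosen orientation of $\vec P$ breaks the circle at the ``right'' place so that every wrap-around vertex genuinely splits into a nonempty source-group above $\ell$ and a nonempty sink-group below $1$, and that a non-wrapping vertex never accidentally becomes type~C; the condition $\ell\ge 2k-1$ is exactly what guarantees there is room for this (it forces any $k$ consecutive circle-vertices to miss at least $k-1$ others, so the interval $\{1,\ldots,\ell\}$ is strictly longer than any neighbourhood). A secondary point is that the statement as given drops the hereditary remark, so I do not need to subtract missing edges; but if one wanted $K_\ell^k$ with only \emph{some} of the added vertices present, heredity of word-representability would finish that too. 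Once the two cases (wrapping / non-wrapping) are set up and the boundary pair analysis above is recorded, the corollary follows immediately from Theorem~\ref{main-orientation} and Theorem~\ref{key-thm}.
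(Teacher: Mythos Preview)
Your overall strategy is exactly the paper's: orient $K_\ell$ transitively along the circle, make non-wrapping added vertices type~A/B and wrapping ones type~C, then invoke Theorem~\ref{main-orientation}. However, the details of your type~C orientation are backwards and this breaks the proof.

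With the Hamiltonian path $1\rightarrow 2\rightarrow\cdots\rightarrow\ell$, vertex~$1$ is the \emph{source} and vertex~$\ell$ is the \emph{sink} of $\vec P$. By the definition in Theorem~\ref{semi-tran-groups}, the source-group of a type~C vertex is the block containing the source, with edges oriented \emph{towards} the independent vertex; the sink-group contains the sink, with edges oriented \emph{away} from the independent vertex. So for a wrap-around vertex $v$ with neighbourhood $\{\ell-j+1,\ldots,\ell,1,\ldots,k-j\}$ you must take source-group $\{1,\ldots,k-j\}$ (edges into $v$) and sink-group $\{\ell-j+1,\ldots,\ell\}$ (edges out of $v$). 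Your stated orientation gives the directed cycle $v\rightarrow 1\rightarrow\ell\rightarrow v$, so it is not even acyclic.

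Once the orientation is corrected, the boundary pair is $\{x_s,x_{s+1}\}=\{k-j,\ \ell-j+1\}$, which depends on~$j$; it is \emph{not} $\{\ell,1\}$. Your verification that no type~A/B vertex hits both $1$ and $\ell$ only uses $k<\ell$, not $\ell\ge 2k-1$ --- that is a sign the wrong pair is being checked. The genuine argument is: a non-wrapping $k$-interval $\{i,\ldots,i+k-1\}$ containing both $k-j$ and $\ell-j+1$ would force $\ell-j+1\le i+k-1$ and $i\le k-j$, hence $\ell\le 2k-2$, contradicting $\ell\ge 2k-1$. For the type~C--versus--type~C check, every source-group lies in $\{1,\ldots,k-1\}$ and every sink-group in $\{\ell-k+2,\ldots,\ell\}$; since $\ell\ge 2k-1$ these ranges are disjoint, so neither group of any $y$ can contain both $k-j$ and $\ell-j+1$. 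With these fixes, your proof goes through and matches the paper's.
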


\begin{proof} Orient the clique in $K_{\ell}^k$ transitively with the Hamiltonian path going around the circle, and then assign to the vertices in the independent set types A (or B) and C. Because  $\ell\geq 2k-1$, no vertex of the type A or B will be violating the condition of  Theorem~\ref{relative-order}, and thus by Theorem~\ref{main-orientation}, the obtained orientation is semi-transitive. \end{proof}

Yet another corollary of Theorem~\ref{main-orientation} is a quick proof of non-word-representability of the graph $A_{\ell}$ in Theorem~\ref{Aell-min-non-repres}. Indeed, the fact that the vertex $\ell$ is not connected to any non-clique triangle means that it is either a sink/source, or there exists (exactly one) vertex of the type C in the independent set. In the former case, no type C vertices can exist, and one can get a contradiction with a vertex in the independent set be connected to non-consecutive vertices in $\vec{P}$. Same contradiction is obtained in the later case. 

We complete this section with one more property of semi-transitive orientations on split graphs. 

\begin{theorem}\label{intercahnging} 
Let $S_n=(E_{n-m},K_m)$ be semi-transitively oriented. Then any vertex in $E_{n-m}$ of the type A can be replaced by a vertex of the type B, and vice versa, keeping orientation be semi-transitive. 
\end{theorem}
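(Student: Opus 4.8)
The plan is to invoke the classification of semi-transitive orientations of split graphs given in Theorem~\ref{main-orientation}. Interpret the operation ``replace a type~A vertex $x$ by a type~B vertex'' as keeping the underlying graph $S_n$ and the orientation of every edge not incident to $x$ fixed, while reversing every edge incident to $x$: since $x$ was a local source (all its edges pointed out of it into $\vec P$), after the reversal all its edges point into it, so $x$ becomes a type~B vertex whose set of neighbours is the same consecutive interval of $\vec P$ as before. The ``vice versa'' operation is the mirror image and is handled identically. Thus it suffices to check that the three bullet conditions of Theorem~\ref{main-orientation} are preserved under this reversal.

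The first two conditions are immediate. The clique $K_m$ is still oriented transitively because none of the reversed edges lies in $K_m$ (they all join $x\in E_{n-m}$ to vertices of $\vec P$), so the orientation induced on $K_m$ is untouched. Every vertex of $E_{n-m}$ other than $x$ has all of its incident edges unchanged, hence keeps its type, and $x$ itself is now a bona fide type~B vertex, so each vertex of $E_{n-m}$ is still of one of the three types in Figure~\ref{3-groups}. (One may also observe directly that no directed cycle is created: before the reversal $x$ is a source and after it is a sink, so $x$ never lies on a directed cycle, and directed cycles avoiding $x$ are unaffected.)

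The only point requiring care — and the main obstacle — is the third condition, namely that the restrictions in Theorem~\ref{relative-order} survive the reversal. The key observation is that those restrictions do not distinguish type~A from type~B: for each type~C vertex $x'$, with $x'_s$ the last vertex of its source-group and $x'_{s+1}$ the first vertex of its sink-group in the notation of Theorem~\ref{relative-order}, the forbidden configurations are ``a vertex of type~A \emph{or} type~B adjacent to both $x'_s$ and $x'_{s+1}$'' and ``a vertex of type~C one of whose groups contains both $x'_s$ and $x'_{s+1}$''. Reversing the edges at $x$ changes neither the adjacency relation of $S_n$ nor the type (hence the source-group and sink-group) of any type~C vertex, and it does not turn $x$ into a type~C vertex. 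Hence any forbidden configuration present after the reversal would already have been present before it, contradicting the assumed semi-transitivity of the original orientation. Therefore all three conditions of Theorem~\ref{main-orientation} hold for the new orientation, which is consequently semi-transitive, completing the proof.
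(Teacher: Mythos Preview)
Your proof is correct. By invoking Theorem~\ref{main-orientation} you reduce the question to checking that its three bullet conditions are preserved when all edges at $x$ are reversed; the first two are immediate, and for the third you correctly observe that the restrictions of Theorem~\ref{relative-order} mention ``type~A or type~B'' indiscriminately and otherwise depend only on the underlying adjacencies and on the (unchanged) type~C vertices and their source/sink-groups. Since Theorem~\ref{main-orientation} is an if-and-only-if statement proved earlier in the paper without any appeal to Theorem~\ref{intercahnging}, there is no circularity.

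This is a genuinely different route from the paper's own proof. The paper argues directly from the definition of a shortcut: after the reversal $x$ is a global sink, so any new shortcut must have $x$ as its sink and some $x_i\to x$ as the shortcutting edge, and one then derives a contradiction from the fact that $x_1,\ldots,x_t$ are consecutive on $\vec P$ and, together with $x$, induce a transitive tournament. Your argument is more structural and arguably cleaner: it isolates the reason the swap is harmless, namely that the characterisation in Theorem~\ref{main-orientation} is symmetric in types~A and~B. The paper's approach, on the other hand, is more self-contained in that it does not lean on the full classification theorem. Either way the parenthetical remark you include about acyclicity is not needed for the logic (it is subsumed by Theorem~\ref{main-orientation}) but does no harm.
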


\begin{proof} 
Suppose that a vertex $x$ of the type A 
\vspace{-0.8cm}
\begin{center}
\begin{tabular}{ccc}

\begin{tikzpicture}[->,>=stealth', shorten >=1pt,node distance=0.5cm,auto,main node/.style={fill,circle,draw,inner sep=0pt,minimum size=5pt}]

\node[main node] (1) {};
\node [above of=1,yshift=-5mm,xshift=-3mm] {$x$};
\node[main node] (2) [right of=1,xshift=5mm] {};
\node[main node] (3) [above of=2] {};
\node [right of=2,xshift=-1mm] {$x_2$};
\node [right of=3,xshift=-1mm] {$x_1$};
\node (4) [above of=3] {};
\node[main node] (5) [below of=2] {};
\node (6) [below of=5] {};
\node [right of=6,xshift=-1mm,yshift=2mm] {$x_t$};
\node [above right of=5,yshift=-1mm] {$\vdots$};
\node (7) [above of=4,yshift=-4mm,xshift=1mm] {};
\node (8) [above right of=7,xshift=6mm]{};

\node (10) [below of=6,yshift=4mm,xshift=1mm] {};
\node (11) [below right of=10,xshift=6mm]{};

\path
(4) edge (3)
(3) edge (2)
(2) edge (5)
(5) edge (6);

\path
(1) edge (2)
(1) edge (3)
(1) edge (5);

\end{tikzpicture}

&

\begin{tikzpicture}[->,>=stealth', shorten >=1pt,node distance=0.5cm,auto,main node/.style={fill,circle,draw,inner sep=0pt,minimum size=5pt}]

\node (1) {};
\node (2) [above of=1,yshift=1cm,xshift=-1cm] {becomes a vertex of the type B};
\end{tikzpicture}

&

\begin{tikzpicture}[->,>=stealth', shorten >=1pt,node distance=0.5cm,auto,main node/.style={fill,circle,draw,inner sep=0pt,minimum size=5pt}]

\node[main node] (1) {};
\node [above of=1,yshift=-5mm,xshift=-3mm] {$x$};
\node[main node] (2) [right of=1,xshift=5mm] {};
\node[main node] (3) [above of=2] {};
\node [right of=2,xshift=-1mm] {$x_2$};
\node [right of=3,xshift=-1mm] {$x_1$};
\node [right of=6,xshift=-1mm,yshift=2mm] {$x_t$};
\node [above right of=5,yshift=-1mm] {$\vdots$};
\node (4) [above of=3] {};
\node[main node] (5) [below of=2] {};
\node (6) [below of=5] {};

\node (7) [above of=4,yshift=-4mm,xshift=1mm] {};
\node (8) [above right of=7,xshift=6mm]{};

\node (10) [below of=6,yshift=4mm,xshift=1mm] {};
\node (11) [below right of=10,xshift=6mm]{};

\path
(4) edge (3)
(3) edge (2)
(2) edge (5)
(5) edge (6);

\path
(2) edge (1)
(3) edge (1)
(5) edge (1);

\end{tikzpicture}
\end{tabular}
\end{center}

\vspace{-0.8cm}
\noindent
while no other orientation is changed in the semi-transitively oriented $S_n$. Clearly, if the change has resulted in a non-semi-transitive orientation then the vertex $x$ must be involved in a shortcut (it cannot be involved in a directed cycle) with $x_i\rightarrow x$ being a shortcutting edge for some $i$. This contradicts to the vertices $x_1,\ldots,x_t$ being consecutive on $\vec{P}$ and inducing a transitive orientation together with $x$.

Essentially identical arguments, with a shortcutting edge being $x\rightarrow x_i$ this time, show that switching from type B to type A for a vertex $x\in E_{n-m}$ does not result in a non-semi-transitive orientation.
\end{proof}

\section{Concluding remarks}\label{last-sec}

In this paper, we characterized in terms of forbidden subgraphs word-representable split graphs $S_n=(E_{n-m},K_m)$ in which vertices in $E_{n-m}$ are of degree at most 2 (see Theorem~\ref{main-1}), or the size of $K_m$ is 4 (see Theorem~\ref{main-2}). Moreover, in Theorem~\ref{main-orientation} we give necessary and sufficient conditions for an orientation of a split graph to be semi-transitive.

There are two natural directions in which our results could be extended. One can consider vertices of degree at most 3 in $E_{n-m}$ (thus extending the results in Theorem~\ref{main-1}), or letting the clique be $K_5$ (thus extending the results in Theorem~\ref{main-2}). Either of these directions is challenging due to a large number of cases to consider. It is conceivable that our classification result, Theorem~\ref{main-orientation}, on semi-transitive orientations of split graphs will eventually be the key for a complete classification of word-representable split graphs. 

\section{Acknowledgments}
The first author is grateful to the Fudan University and Shanghai Jiao Tong University for their hospitality during the author's visit of the places in May 2017. The second author acknowledges support of National Natural Science Foundation of China (No. 11671258) and Postdoctoral Science Foundation of China  (No.\ 2016M601576).

\end{document}